\definecolor{modra3}{rgb}{.1,.0,.4}
\newtheorem{theorem}{Theorem}[section]
\newtheorem{proposition}[theorem]{Proposition}
\newtheorem{observation}[theorem]{Observation}
\newtheorem{lemma}[theorem]{Lemma}
\newtheorem{question}[theorem]{Question}
\newtheorem{corollary}[theorem]{Corollary}
\theoremstyle{remark}
\newtheorem{definition}[theorem]{Definition}
\newtheorem*{remark}{Remark}
\newcommand{\ex}{\mathrm{ex}}
\newcommand{\exminor}{\mathrm{exm}}
\newcommand{\row}{r}
\newcommand{\hust}{q}
\newcommand{\hloubka}{\tilde{h}}
\newcommand{\Cross}{\mathrm{Cross}}
\begin{document}

\title{Better upper bounds on the F\"{u}redi--Hajnal limits of permutations\thanks{An extended abstract of this paper appeared in Proceedings of the Twenty-Eighth Annual ACM-SIAM Symposium on Discrete Algorithms (SODA 2017), 2280--2293. The second author was supported by ERC Advanced Research Grant no 267165 (DISCONV), by the grant no.
18-19158S of the Czech Science Foundation (GA\v{C}R) 
and by Charles University project UNCE/SCI/004.}} 

\author{Josef Cibulka\footnote{Department of Applied Mathematics and Institute for Theoretical Computer Science, Charles University, Faculty of Mathematics and Physics, Malostransk\'e n\'am.~25, 118~00~ Praha 1, Czech Republic; \texttt{cibulka@kam.mff.cuni.cz, kyncl@kam.mff.cuni.cz}
} 
\and 
Jan Kyn\v{c}l\footnotemark[2] \thanks{Alfr\'ed R\'enyi Institute of Mathematics, Re\'altanoda u. 13-15, Budapest 1053, Hungary}
}

\date{}

\maketitle

\begin{abstract}
A \emph{binary matrix} is a matrix with entries from the set $\{0,1\}$.
We say that a binary matrix $A$ \emph{contains} a binary matrix $S$ if $S$ can
be obtained from $A$ by removal of some rows, some columns, and changing some
$1$-entries to $0$-entries. If $A$ does not contain $S$, we say that $A$ 
\emph{avoids} $S$. A \emph{$k$-permutation matrix} $P$ is a binary $k \times k$ matrix
with exactly one $1$-entry in every row and one $1$-entry in every column.

The F\"{u}redi--Hajnal conjecture, proved by Marcus and Tardos, states
that for every permutation matrix $P$, there is a constant $c_P$ such that for
every $n \in \mathbb{N}$, every $n \times n$ binary matrix $A$ with at least $c_P n$
$1$-entries contains $P$. 

We show that $c_P \le 2^{O(k^{2/3}\log^{7/3}k / (\log\log k)^{1/3})}$ 
asymptotically almost surely for a random $k$-permutation matrix $P$. 
We also show that $c_P \le 2^{(4+o(1))k}$ for every $k$-permutation matrix $P$, 
improving the constant in the exponent of a recent upper bound on $c_P$ by Fox.
Moreover, we improve the upper bound on $c_P$ in terms of the Stanley--Wilf limit $s_P$ to $c_P \le O\big(s_P^{2.75} \log s_P\big)$.

We also consider a higher-dimensional generalization of the Stanley--Wilf conjecture about the number of $d$-dimensional $n$-permutation matrices avoiding a fixed $d$-dimensional $k$-permutation matrix,
and prove almost matching upper and lower bounds of the form $(2^k)^{O_d(n)} \cdot (n!)^{d-1-1/(d-1)}$
and $n^{-O_d(k)} k^{\Omega_d(n)} \cdot (n!)^{d-1-1/(d-1)}$, respectively.
\end{abstract}


\section{Introduction}
A \emph{binary matrix} is a matrix with entries from the set $\{0,1\}$.
We say that an $n \times n$ binary matrix $A$ \emph{contains} a $k \times k$ binary
matrix $B$ if $B$ can be obtained from $A$ by removing some rows, some columns
and by changing some $1$-entries to $0$-entries.
If $A$ does not contain $B$, we say that $A$ \emph{avoids} $B$.

For every $n \in \mathbb{N}$, we abbreviate the set $\{1, 2, \dots, n\}$ as $[n]$.
A \emph{$k$-permutation} $\pi$ is a permutation on $[k]$, that is, 
a bijective function $\pi: [k] \rightarrow [k]$.
We will also sometimes represent a permutation by the sequence of the function 
values, that is, as $(\pi(1), \pi(2), \dots, \pi(k))$.
A \emph{permutation matrix} is a square binary matrix with exactly one $1$-entry 
in every row and in every column.
A $k \times k$ permutation matrix is also called a \emph{$k$-permutation matrix}.
A $k$-permutation matrix $P$ corresponds to the $k$-permutation $\pi$ 
satisfying, for every $i,j \in [k]$, $\pi(i)=j$ if and only if $P_{i,j} = 1$.
Note that by this definition, a graph of $\pi$ as a function is 
obtained by rotating $P$ by $90$ degrees counterclockwise.

The \emph{restriction} of an $n$-permutation $\rho$ on a set $\{s_1, s_2, \dots, s_l\}$ of 
positions, where $1 \le s_1 < s_2 < \dots < s_l \le n$, is the $l$-permutation $\pi$ 
where $\pi(i) < \pi(j)$ if and only if $\rho(s_i) < \rho(s_j)$ for every 
$i,j \in [l]$.
If $\pi$ is not a restriction of $\rho$ on any set of positions, then we say
that $\rho$ \emph{avoids} $\pi$.
By definition, a permutation $\pi$ is a restriction of a permutation $\rho$ if and only
if the permutation matrix $Q$ corresponding to $\rho$ contains the permutation matrix 
$P$ corresponding to $\pi$.

For a binary matrix $A$ and $n \in \mathbb{N}$, let $\ex_A(n)$ be the maximum 
number of $1$-entries in an $n \times n$ binary matrix avoiding $A$.
The F\"{u}redi--Hajnal conjecture~\cite{FurediHajnal}, proved by Marcus and 
Tardos~\cite{MarcusTardos}, states that for every permutation matrix $P$, there is 
a constant $c$ such that for every $n \in \mathbb{N}$, we have $\ex_P(n) \le c n$. 

For a permutation matrix $P$ and $n \in \mathbb{N}$, let $S_P(n)$ be the number 
of $n$-permutation matrices that avoid $P$.
In other words, $S_P(n)$ is the number of $n$-permutations avoiding $\pi$, where 
$\pi$ is the permutation corresponding to $P$.
The Stanley--Wilf conjecture states that for every permutation matrix $P$, there
is a constant $s$ such that $|S_P(n)| \le s^n$ for every $n \in \mathbb{N}$.
The validity of the conjecture follows from the validity of the F\"{u}redi--Hajnal 
conjecture by an earlier result of Klazar~\cite{Klazar00}.

Fix a permutation matrix $P$.
Arratia~\cite{Arratia99} showed by the supermultiplicativity of the function 
$|S_P(n)|$ that the validity of the Stanley--Wilf conjecture implies that the limit
\[
s_P = \lim_{n \to \infty} |S_P(n)|^{1/n}
\]
exists and is finite.
Similarly, the superadditivity of $\ex_P(n)$~\cite[Lemma 1(ii)]{PachTardos} together with the F\"{u}redi--Hajnal conjecture imply the 
same conclusion for the limit
\[
c_P = \lim_{n \to \infty} \ex_P(n)/n.
\]
The numbers $s_P$ and $c_P$ are called the \emph{Stanley--Wilf limit} and the \emph{F\"{u}redi--Hajnal limit} of $P$, 
respectively.
We will often refer to $s_P$ as the Stanley--Wilf limit of the permutation $\pi$
corresponding to $P$.

The Marcus--Tardos proof~\cite{MarcusTardos} of the F\"{u}redi--Hajnal conjecture 
implies the upper bound $c_P \le 2k^4\binom{k^2}{k}$ for every $k$-permutation matrix $P$.
Klazar's reduction shows that $s_P \le 15^{c_P}$ for every permutation matrix $P$,
thus showing $s_P \le 2^{2^{O(k \log(k))}}$ for every $k$-permutation matrix $P$.
The first author~\cite{Cibulka09} showed that the values of the two limits are 
close to each other, in particular, $\Omega(c_P^{2/9}) \le s_P \le 2.88 c^2_P$
for every permutation matrix $P$.
Fox~\cite{Fox13} improved the upper bound on the F\"{u}redi--Hajnal limit of $k$-permutation matrices 
to $c_P \le 3k 2^{8k}$ (which can be easily lowered 
to $c_P \le k^{O(1)} 2^{6k}$). 
Thus both $s_P$ and $c_P$ are in $2^{O(k)}$, where $k$ is the size of $P$.

The Stanley--Wilf limit of the identity $k$-permutation is $(k-1)^2$~\cite{Regev81}.
By a result of Valtr published in~\cite{KaiserKlazar}, for every $k$ and every 
$k$-permutation matrix $P$, $s_P \ge (k-1)^2 /e^3$. 
Let $s_{1324}$ be the Stanley--Wilf limit of the permutation $(1,3,2,4)$.
Albert et al.~\cite{AERWZ06} proved the lower bound $s_{1324} \ge 9.47$.
B\'{o}na~\cite{Bona07Records} proved that there are infinitely many permutation 
matrices $P$ with $s_P \ge s_{1324}\cdot (k-1)^2/9$. 
Bevan~\cite{Bevan14} increased the lower bound on $s_{1324}$ to $9.81$, 
thus increasing the lower bound for infinitely many permutation matrices $P$ to $s_P \ge 9.81(k-1)^2/9$.
Until recently, no $k$-permutation has been known to have the Stanley--Wilf limit larger 
than quadratic in $k$, which lead to the widely believed conjecture that the Stanley--Wilf 
limits are always at most quadratic in $k$; see the survey by 
Steingr\'{\i}msson~\cite{Steingrimsson13:survey}.
This belief was further supported by the fact that the Stanley--Wilf limit of every 
layered $k$-permutation is bounded from above by $4k^2$~\cite{CJS12} (a \emph{layered permutation} is a concatenation of decreasing sequences $S_1,S_2,\dots,S_l$ such that for every $i\le l-1$, all elements of $S_i$ are smaller than all elements of $S_{i+1}$).

The situation concerning the F\"{u}redi--Hajnal limit was similar. 
A simple observation gives the lower bound $c_P \ge 2(k-1)$ for all $k$-permutation 
matrices and this lower bound is attained by the $k \times k$ unit matrix and several 
other $k$-permutation matrices~\cite{FurediHajnal}.
The best lower bound on the F\"{u}redi--Hajnal limit of some class of permutations
was quadratic in the size of the permutations~\cite{Cibulka09}.

A breakthrough occurred when Fox~\cite{Fox13} gave a randomized construction showing 
that for every $k$, there are $k$-permutation matrices $P$ with 
$c_P \ge 2^{\Omega(k^{1/2})}$ and thus $s_P \ge 2^{\Omega(k^{1/2})}$.
He additionally showed that as $k$ goes to infinity, almost all $k$-permutation
matrices satisfy $c_P \ge 2^{\Omega((k/\log k)^{1/2})}$.

\paragraph{Contractions and interval minors.}
Contracting rows, columns and blocks is a crucial technique for studying permutation avoidance. 
Let $A$ be an $n \times n$ binary matrix with rows $r_1, r_2, \dots, r_n$, in this order.
A partition $\mathcal{I} = \{I_1, I_2, \dots, I_t\}$ of the set of rows of $A$ is called an \emph{interval decomposition} of the rows of $A$ if each of the sets $I_j$ consists of a nonzero number of consecutive rows, and $i<i'$ whenever $j<j'$, $r_i\in I_j$ and $r_{i'}\in I_{j'}$. The sets $I_j$ are called the \emph{intervals} of the decomposition.
An interval decomposition of the columns is defined analogously.

A \emph{block decomposition} of $A$ is determined by a row decomposition $\mathcal{I}=\{I_1, I_2, \dots, I_t\}$ 
and a column decomposition $\mathcal{I}'= \{I'_1, I'_2, \dots, I'_{t'}\}$ as follows.
For every $i \in [t]$ and $j \in [t']$, the \emph{$(i,j)$-block} of $A$ is the 
submatrix of $A$ on the intersection of $I_{i}$ and $I'_{j}$.
To \emph{contract the blocks} means to create a $t \times t'$ binary matrix $B$ such that $B_{i,j}=0$ if and only if the $(i,j)$-block of $A$ contains only zeros.
To \emph{contract by an interval decomposition of rows} means to create a matrix 
with one row for each interval where each row has $0$-entries exactly in those columns where the corresponding interval has only zeros.
\emph{Contraction by an interval decomposition of columns} is defined analogously.

A binary matrix $B$ is an \emph{interval minor} of a binary matrix $A$ if $B$ can be 
obtained from $A$ by the contraction of blocks of some block decomposition followed 
possibly by replacing some $1$-entries with $0$-entries.
Although contractions were used earlier, the interval minors were defined only 
recently by Fox~\cite{Fox13}.

The matrix $J_{r,k}$ is the $r \times k$ matrix with $1$-entries only.
The matrix $J_{k,k}$ is abbreviated as $J_k$.

Given a binary matrix $B$, let $\exminor_B(n)$ be the maximum number of $1$-entries in 
an $n \times n$ matrix $A$ such that $B$ is not an interval minor of $A$.
Clearly, if $P$ is a permutation matrix, then for every binary matrix $A$, $A$
contains $P$ if and only if $P$ is an interval minor of $A$, and thus 
$\exminor_P(n) = \ex_P(n)$.
Furthermore, if $M$ is an interval minor of $A$, then every 
interval minor $B$ of $M$ is also an interval minor of $A$.

Marcus and Tardos~\cite{MarcusTardos} actually proved that $\exminor_{J_k}(n) \le 2k^4\binom{k^2}{k} n$, 
which implies the same upper bound on $\ex_P(n)$ for every $k$-permutation matrix $P$.
Fox~\cite{Fox13} improved the upper bound to $\exminor_{J_k}(n) \le 3k 2^{8k} n$.

\paragraph{Higher-dimensional matrices.}
Similar questions can be asked for higher-dimensional permutation matrices. 

We call $M \in \{0,1\}^{[n_1] \times \dots \times [n_d]}$ a \emph{d-dimensional 
binary matrix of size $n_1 \times \dots \times n_d$}.
A $d$-dimensional binary matrix $P$ of size $k \times \dots \times k$ is a \emph{$d$-dimensional
$k$-permutation matrix} if $P$ contains $k$ 1-entries and the positions of every pair of 1-entries
of $P$ differ in all coordinates. 

We say that a $d$-dimensional binary matrix $P=(p_{i_1,\dots,i_d})$ of size $k_1 \times \dots \times k_d$
is \emph{contained} in a $d$-dimensional binary matrix $A=(a_{i_1,\dots,i_d})$ of size $n_1 \times \dots \times n_d$
if there exist $d$ increasing injections $f_i: [k_i] \rightarrow [n_i]$, $i=1,2,\dots,d$
such that for all $i_1, i_2, \dots, i_d \in [k]$, if $p_{i_1, \dots, i_d} = 1$ then
$a_{f_1(i_1), \dots, f_d(i_d)} = 1$. If $P$ is not contained in $A$, we say that 
$A$ \emph{avoids} $P$. 

When $P$ is a $d$-dimensional permutation matrix, we let $\ex_P(n)$ be the maximum number of $1$-entries in
a $P$-avoiding $n \times \cdots \times n$ $d$-dimensional binary matrix.
Klazar and Marcus~\cite{KlazarMarcus} proved an analogue of the F\"uredi--Hajnal conjecture 
for higher-dimensional matrices. 
For any given $d$-dimensional permutation matrix $P$, they showed that $\ex_P(n) \le 2^{O(k\log k)} n^{d-1}$.
Geneson and Tian~\cite[Equation (4.5)]{GenesonTian} improved the upper bound to $\ex_P(n) \le 2^{O(k)} n^{d-1}$, generalizing the upper bound for $2$-dimensional permutation matrices by Fox~\cite{Fox13}.

For a $d$-dimensional permutation matrix $P$, let $S_P(n)$ be 
the set of $d$-dimensional $n\times \dots \times n$ permutation matrices avoiding $P$. 
The first author~\cite{Cibulka09} proved that for every fixed forbidden matrix $P$, we have
\[
2^{\Omega_d(n)}\cdot (n!)^{d-2} \le |S_P(n)| \le 2^{O_d(n \log\log n)} \cdot (n!)^{d-1 - 1/(d-1)},
\]
where $O_d$ and $\Omega_d$ mean that the constants hidden by the $O$- and $\Omega$-notation depend only on $d$. We use this notation throughout the paper, in particular in Sections~\ref{sec:higherdim} and~\ref{sec_conclusion}.


\subsection{New results}
A $1$-entry in a matrix is identified by the pair $(i,j)$ of the row index $i$
and the column index $j$.  The \emph{distance vector} between the entries
$(i_1, j_1)$ and $(i_2, j_2)$ is $(i_2-i_1, j_2-j_1)$.  
We say that a vector $(d,d')$ is \emph{$r$-repeated} in a permutation matrix $P$ 
if $(d,d')$ occurs as the distance vector of at least $r$ pairs of $1$-entries.
If some vector is $r$-repeated in a permutation matrix $P$, then $P$ has an 
\emph{$r$-repetition}; otherwise, $P$ is \emph{$r$-repetition-free}.

The following theorem shows that the 
F\"{u}redi--Hajnal limit (and hence the Stanley--Wilf limit) is subexponential\footnote{A function $f: \mathbb{R} \rightarrow \mathbb{R}$ \emph{grows exponentially} if 
$f(n) \in 2^{\Theta(n)}$.
Notice that Fox~\cite{Fox13} uses the less restrictive definition where all functions $f(n) \in 2^{n^{\Theta(1)}}$ are exponential.}
for $k$-permutation matrices with no $\Omega(k/\log^6(k))$-repetition.

\begin{theorem}
\label{thm:upper-bound-repetition-free}
Let $k \ge 9$, $r \ge 3$ and let $P$ be an $r$-repetition-free $k$-permutation matrix.
The F\"{u}redi--Hajnal limit of $P$ satisfies
\[
c_P \le 2^{O(r^{1/3} k^{2/3} \log^2 k)}.
\]
\end{theorem}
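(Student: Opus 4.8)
The plan is to follow the Marcus--Tardos / Fox framework of bounding $\exminor(n,J_k)$, but to exploit the repetition-freeness of $P$ to replace the ``full'' all-ones matrix $J_k$ by something we can forbid more cheaply. The standard argument goes: given an $n\times n$ matrix $A$ with many $1$-entries avoiding $P$, partition the rows and columns into intervals of some size $s$, contract blocks, and argue that the contracted matrix $B$ cannot contain $J_{k}$ as an interval minor (otherwise $A$ would contain $P$, since inside $k^2$ distinct nonzero blocks one can greedily pick out a copy of any $k$-permutation matrix). Then one applies the inductive bound on $\exminor(\cdot, J_k)$ to $B$, and a separate count bounds the contribution of blocks with few $1$-entries. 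The recurrence that results is of the form $\exminor(n,J_k)\le (\text{stuff})\cdot s^2 + (\text{stuff})\cdot \exminor(n/s,J_k)$, and optimizing $s$ gives $c_P \le 2^{O(k)}$ in Fox's treatment. To do better for $r$-repetition-free $P$, I would (i) show that a much sparser ``pattern target'' than $J_k$ already forces a copy of $P$ once $A$ has enough structure, using the repetition-freeness, and (ii) re-run the recurrence with this cheaper target, producing the claimed subexponential bound.

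The key technical step is the combinatorial lemma that makes repetition-freeness pay off. Concretely I would aim to prove something like: if $P$ is $r$-repetition-free, then $P$ is an interval minor of every $n\times n$ $0/1$ matrix $A$ in which every one of a suitable collection of roughly $\ell\times\ell$ ``coarse'' blocks is nonzero, where $\ell$ can be taken much smaller than $k$ — something on the order of $O(r^{1/3}k^{2/3}\log k)$ rather than $k$. The intuition is that repetition-freeness spreads the $1$-entries of $P$ around so that the embedding of $P$ into $A$ has a lot of freedom: one does not need $k^2$ independent blocks, only enough to route the $k$ ones of $P$ while respecting that no distance vector is reused more than $r-1$ times, and a counting / pigeonhole argument on distance vectors turns the repetition bound into a bound on how many coarse blocks suffice. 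Equivalently, one forbids a matrix $J_{\ell}$ with $\ell\ll k$ and then, in the contracted matrix, one cannot even find $J_\ell$ as an interval minor without recovering $P$ itself.

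With that lemma in hand, the recursion becomes $\exminor(n, \text{target}) \le c_1 \ell^2 s^2 + \exminor(n/s,\cdot)\cdot(\text{factor})$ where the base target $J_\ell$ contributes $\exminor(n,J_\ell)\le 2^{O(\ell)} n$ by Fox's bound (applied to size $\ell$, not $k$), and the bookkeeping factors are polynomial in $k$. Substituting $\ell = \Theta(r^{1/3}k^{2/3}\log k)$ and choosing the block size $s$ to balance the two terms — $s$ of order roughly $2^{\Theta(\ell)}$ — collapses the recursion to $c_P \le \ell^2 s^2 \cdot \mathrm{poly}(k) = 2^{O(\ell)} = 2^{O(r^{1/3}k^{2/3}\log^2 k)}$, where the extra $\log k$ factor absorbs the $\log$'s introduced by the depth of the recursion and by the $\log\ell = O(\log k)$ inside the $2^{O(\ell)}$ bookkeeping. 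I would finally convert the interval-minor bound into a bound on $\ex(n,P)$ (they coincide for permutation matrices) and then on $c_P$ via superadditivity, exactly as in the introduction.

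The main obstacle is the combinatorial lemma of step (i): making precise, and proving, that $r$-repetition-freeness lets $\ell$ drop from $k$ to $\widetilde O(r^{1/3}k^{2/3})$. This is where the $2/3$ exponent must come from, presumably by an extremal/probabilistic argument balancing two competing costs — too few coarse blocks and one cannot embed all $k$ ones of $P$; too many and the recursion is not improved — with the repetition bound $r$ entering the trade-off so that the optimum sits at $\ell \sim r^{1/3}k^{2/3}$ up to logarithmic factors. Everything downstream (the recursion, the optimization of $s$, the passage to $c_P$) is the routine Marcus--Tardos machinery and should go through essentially verbatim with $k$ replaced by $\ell$ in the expensive places.
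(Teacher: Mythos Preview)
Your proposal is not the paper's approach, and the central lemma you would need is left as an acknowledged obstacle without any indication of how repetition-freeness would actually produce it. The paper does \emph{not} try to replace $J_k$ by a smaller $J_\ell$ in the interval-minor recursion. Instead it proves a fixed-size density statement (Theorem~\ref{thm:fixed-size}): every $4k\times 4k$ binary matrix with at most $(k/3)(k/r)^{1/3}$ zero-entries contains $P$. This is then fed into Theorem~\ref{thm:framework}, which converts ``$\ex_P(u)<qu^2$'' into $c_P\le 2u^3 u^{\lceil -\log u/\log q\rceil}$ by an iterated row-contraction argument (Lemma~\ref{lemma_sparsifying}) layered on top of Fox's recursion. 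With $u=4k$ and $q=1-\Theta(r^{-1/3}k^{-2/3})$ one gets $c_P\le 2^{O(r^{1/3}k^{2/3}\log^2 k)}$ directly; the $2/3$ exponent comes from the density lemma, not from choosing a target $J_\ell$.

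The mechanism by which repetition-freeness enters is also quite different from what you sketch. The paper runs $2k+1$ parallel instances of a greedy left-to-right embedding of $P$ into a $3k\times 3k$ matrix $B$, one instance per $k$-tuple of consecutive rows. If many instances stall at the same $0$-entry $B_{i,j}$ and then all move to $B_{i,j'}=1$, an instance that stalls again $l$ steps later at some $B_{i',j'+l}=0$ witnesses a pair of $1$-entries of $P$ with distance vector $(i'-i,l)$; since the instances have made pairwise distinct numbers of moves, distinct stalled instances give distinct such pairs, and $r$-repetition-freeness caps their number by $r-1$ per $0$-entry (Lemma~\ref{lem:moves-now-or-later}). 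Balancing the window length $w\sim(k/r)^{1/3}$ against the per-row/column zero budget $v\sim(k/r)^{1/3}$ is what yields the $r^{1/3}k^{2/3}$ shape. Your proposed lemma---that a $J_\ell$ block pattern with $\ell=\tilde O(r^{1/3}k^{2/3})$ already forces $P$---has no visible connection to this distance-vector counting, and as stated it is not even clearly true: with $\ell<k$ several $1$-entries of $P$ must land in the same row-interval and column-interval of the block decomposition, so nonemptiness of all $\ell^2$ blocks is far from sufficient without further control on the block contents. The genuine work is the parallel-greedy argument and the density-to-$c_P$ framework, neither of which appears in your plan.
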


We say that a $k$-permutation matrix $P$ is \emph{scattered} 
if $P$ is $r$-repetition-free for every $r\ge 4\log_2 k / \log_2\log_2 k$.
In Section~\ref{section_scatter}, we show that as $k$ goes to infinity, 
almost all $k$-permutation matrices are scattered.
This immediately implies the following upper bound on the F\"{u}redi--Hajnal limit
of asymptotically almost all permutation matrices.

\begin{corollary}
\label{cor:random-permutation}
For every $k\ge 9$ and a random $k$-permutation matrix $P$, the F\"{u}redi--Hajnal limit of $P$ satisfies 
\[
c_P \le 2^{O(k^{2/3}\log^{7/3}k / (\log\log k)^{1/3})}
\] 
asymptotically almost surely.
\end{corollary}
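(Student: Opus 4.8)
The plan is to combine Theorem~\ref{thm:upper-bound-repetition-free} with the claimed fact (to be proved in Section~\ref{section_scatter}) that a random $k$-permutation matrix is scattered asymptotically almost surely. Set $r = r(k) := \lceil 4\log_2 k / \log_2\log_2 k\rceil$. If $P$ is scattered, then by definition $P$ is $r'$-repetition-free for every $r' \ge 4\log_2 k/\log_2\log_2 k$, and in particular $P$ is $r$-repetition-free for this choice of $r$. Since $r \ge 3$ for all sufficiently large $k$ (and we may absorb small $k$ into the $O(\cdot)$), Theorem~\ref{thm:upper-bound-repetition-free} applies and gives
\[
c_P \le 2^{O(r^{1/3} k^{2/3} \log^2 k)}.
\]

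It remains to substitute $r = \Theta(\log k / \log\log k)$ into the exponent. We get $r^{1/3} = \Theta\bigl((\log k)^{1/3}/(\log\log k)^{1/3}\bigr)$, so that
\[
r^{1/3} k^{2/3} \log^2 k = \Theta\!\left( k^{2/3} \cdot \frac{\log^{1/3}k}{(\log\log k)^{1/3}} \cdot \log^2 k \right) = \Theta\!\left( \frac{k^{2/3} \log^{7/3} k}{(\log\log k)^{1/3}} \right),
\]
using $2 + 1/3 = 7/3$. Hence for every scattered $k$-permutation matrix $P$ we obtain $c_P \le 2^{O(k^{2/3}\log^{7/3}k/(\log\log k)^{1/3})}$.

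Finally, by the result of Section~\ref{section_scatter}, the probability that a uniformly random $k$-permutation matrix $P$ fails to be scattered tends to $0$ as $k \to \infty$; that is, $P$ is scattered asymptotically almost surely. On the event that $P$ is scattered, the bound above holds, which is exactly the assertion of the corollary. The only genuine content here is the a.a.s.\ scatteredness of random permutations, which is deferred to Section~\ref{section_scatter}; given that, the corollary is immediate, the single nontrivial step being the arithmetic of plugging $r = \Theta(\log k/\log\log k)$ into Theorem~\ref{thm:upper-bound-repetition-free}.
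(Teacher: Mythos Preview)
Your proposal is correct and follows essentially the same approach as the paper: the paper states that the corollary is an immediate consequence of combining Theorem~\ref{thm:upper-bound-repetition-free} with the fact (proved in Section~\ref{section_scatter}, Theorem~\ref{thm:scattered}) that almost all $k$-permutation matrices are scattered, and your argument spells out exactly this, including the arithmetic substitution $r = \Theta(\log k/\log\log k)$ that yields the exponent $k^{2/3}\log^{7/3}k/(\log\log k)^{1/3}$.
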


We also show upper bounds for some permutation matrices that are far from being scattered.

Let $k$ be a square of an integer and let $G_k$ be the $k\times k$ binary matrix with $1$-entries at positions
$(a+b\sqrt{k}+1, b+a\sqrt{k}+1)$ for every pair $a,b \in \{0, \dots, \sqrt{k}-1\}$.
Fox~\cite{Fox13} used $G_k$ as an example of a permutation matrix for whose 
F\"uredi--Hajnal limit he proved the $2^{\Omega(k^{1/2})}$ lower bound.
We show an upper bound that differs only by a $\log^2(k)$ multiplicative factor in the exponent.

\begin{theorem}
\label{thm:FoxGrid}
For every $k \ge 1$ such that $\sqrt{k} \in \mathbb{N}$, we have
\[
c_{G_k} \le 2^{O(\sqrt{k} \log^2 k)}.
\]
\end{theorem}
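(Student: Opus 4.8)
The plan is to reduce the bound on $c_{G_k}$ to Theorem~\ref{thm:upper-bound-repetition-free} applied not to $G_k$ itself, but to a suitable ``perturbed'' permutation matrix that is $r$-repetition-free for a small value of $r$, while still containing $G_k$ as an interval minor in a controlled way. The matrix $G_k$ is the worst possible case for repetitions: the distance vector $(\sqrt{k}, 1-\sqrt k)$ (moving one step within a diagonal block) is repeated roughly $k-\sqrt k$ times, so Theorem~\ref{thm:upper-bound-repetition-free} gives nothing directly. So the first step is to replace each of the $k$ points of $G_k$ by a small gadget (a short increasing or decreasing run, or a generic scattered pattern) so that the resulting $m$-permutation matrix $G'_k$, with $m = k \cdot \mathrm{polylog}(k)$, contains $G_k$ as an interval minor, and is $r$-repetition-free for $r = O(\log m/\log\log m) = O(\log k/\log\log k)$. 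Since containing $G_k$ as an interval minor implies $\ex(n,G_k) \le \ex(n,G'_k)$ (as $G_k$ is a permutation matrix, interval-minor containment and ordinary containment coincide, and interval minors are transitive by the remark in the excerpt), we would get $c_{G_k} \le c_{G'_k}$.

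The second step is to apply Theorem~\ref{thm:upper-bound-repetition-free} to $G'_k$ with the size parameter $m = O(k\,\mathrm{polylog}\,k) = O(k\log^{c}k)$ and repetition parameter $r = O(\log k/\log\log k)$, which yields
\[
c_{G'_k} \le 2^{O\bigl(r^{1/3} m^{2/3} \log^2 m\bigr)}
= 2^{O\bigl((\log k/\log\log k)^{1/3}\cdot (k\log^c k)^{2/3}\cdot \log^2 k\bigr)}.
\]
Collecting the exponents, the $k^{2/3}$ factor is the dominant power of $k$, and every other factor is only a power of $\log k$, so the exponent is $O(k^{2/3}\log^{O(1)}k)$. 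This is already in $2^{O(\sqrt k\,\log^2 k)}$? No --- $k^{2/3}$ beats $\sqrt k$, so this naive route is \emph{too weak}. Therefore the real plan must be different: instead of perturbing, one should exploit the grid structure of $G_k$ directly, mimicking the proof of Theorem~\ref{thm:upper-bound-repetition-free} but with the self-similar block structure of $G_k$ (it is essentially a $\sqrt k\times\sqrt k$ grid of identity-like blocks). Concretely, one sets up a recursion: to bound $\ex(n,G_k)$, partition an $n\times n$ matrix avoiding $G_k$ into blocks of a chosen side length, contract, and observe that the contracted matrix must avoid the $\sqrt k$-permutation matrix $I_{\sqrt k}$-structured coarsening of $G_k$, while inside wide blocks the matrix must avoid $J_{\sqrt k}$-type sub-configurations; the latter is controlled by $\exminor(\cdot,J_{\sqrt k}) \le 2^{O(\sqrt k)}(\cdot)$ from Fox's bound quoted in the excerpt. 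Balancing the block size against these two constraints should produce the clean $2^{O(\sqrt k\log^2 k)}$ bound.

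The main obstacle, and where I expect the bulk of the technical work to live, is making the recursion close with only a $\log^2 k$ loss in the exponent: one must show that an $n\times n$ $G_k$-avoider, after contracting into $n/b \times n/b$ blocks of side $b$, has the property that either the contracted matrix avoids the $\sqrt k$-sized grid coarsening of $G_k$ (so we recurse on the smaller permutation $G_{k}$ restricted to a $\sqrt k$-pattern, i.e.\ essentially $I_{\sqrt k}$ or another grid of order $\sqrt k$), or else many blocks are ``heavy'' and internally avoid $J_{\sqrt k}$, forcing each such block to have at most $2^{O(\sqrt k)}b$ ones. The delicate point is that the coarsening of a $\sqrt k\times\sqrt k$ grid of $\sqrt k$-permutation blocks is again a grid, so the recursion has depth only $\log\log k$, and each level costs a factor $2^{O(\sqrt k\log k)}$ from the $J_{\sqrt k}$ term and a $\mathrm{poly}(k)$ from the bookkeeping; multiplying over the $\log\log k$ levels keeps us at $2^{O(\sqrt k\log^2 k)}$. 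Verifying that the ``heavy block'' count and the choice of $b$ at each level interact correctly --- in particular that we never pay more than $\log k$ per level and that the base case ($G$ of constant size) contributes only a constant --- is the crux; everything else is routine block-contraction bookkeeping of the kind already used in the Marcus--Tardos and Fox arguments.
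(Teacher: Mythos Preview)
Your first approach (perturb $G_k$ into a scattered matrix and invoke Theorem~\ref{thm:upper-bound-repetition-free}) you correctly abandon: it can never beat $k^{2/3}$ in the exponent.

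Your second approach, however, is not a workable plan, and it is not how the paper proceeds. Two concrete problems:
\begin{itemize}
\item The claim that ``heavy blocks internally avoid $J_{\sqrt k}$'' is unfounded. If $A$ avoids $G_k$, a single $b\times b$ block of $A$ can perfectly well contain $J_{\sqrt k}$ as an interval minor; avoidance of $G_k$ is a global constraint, not a local one on blocks. Without this, the $2^{O(\sqrt k)}$ per-block saving you want from Fox's $\exminor(\cdot,J_{\sqrt k})$ bound evaporates.
\item The ``depth $\log\log k$'' recursion is not there. Coarsening $G_k$ by $\sqrt k$-blocks yields $J_{\sqrt k}$ (every block has exactly one $1$), not a smaller grid permutation, so there is no self-similar tower of grids to recurse on. After one coarsening you are already at a non-permutation matrix and the argument stalls.
\end{itemize}

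The paper's route is completely different and avoids any recursion on $k$. It first observes that $G_k = I_{\sqrt k}\,\#\,I_{\sqrt k}$ is a \emph{grid product}. The key lemma (Lemma~\ref{lem:GeneralGrid}) is a one-shot density statement: given any $mt\times mt$ matrix $A$ with few $0$-entries (where $t=kl$ for $R=P\#Q$ with $P$ a $k$-permutation and $Q$ an $l$-permutation), cut $A$ into $k$ vertical strips of width $ml$, overlay them with row-shifts dictated by $P$, and take the entrywise AND. If the resulting $ml$-wide matrix still has enough $1$'s to contain $Q$ on a sparse set of rows, then $A$ contains $R$. This gives $\ex_R(u) \le (1-\Theta(1/k))u^2$ for $u=\Theta(c_Q k)$. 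Then the general framework Theorem~\ref{thm:framework} (itself proved via the Fox/Marcus--Tardos recursion, but on $n$, not on $k$) converts any such fixed-scale density gap into $c_R \le u^{O(\log u/(-\log q))} = 2^{O(k\log^2(c_Qk))}$. Specialising to $P=Q=I_{\sqrt k}$ with $c_Q=2(\sqrt k-1)$ gives $2^{O(\sqrt k\log^2 k)}$ immediately.

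So the missing idea in your plan is the superposition trick for grid products (Lemma~\ref{lem:GeneralGrid}) combined with the density-to-limit framework (Theorem~\ref{thm:framework}); neither a scattering perturbation nor a block recursion on $k$ is used.
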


In fact, we show a slightly more general upper bound for matrices obtained by so-called \emph{grid products};
see Theorem~\ref{thm:GeneralGrid}.

Let $k \ge 2$ be an even integer. 
Let $X_k$ be the $k \times k$ matrix with $1$-entries on both diagonals; that is, at positions $(i,j)$ where $i,j \in [k]$ and
$i+j = k+1$ or $i-j=0$. 

Given an odd integer $k$, let $\Cross_k$ be the $k$-permutation matrix corresponding
to the permutation $\pi$ satisfying $\pi(i) = i$ for $i$ odd and $\pi(i) = k+1-i$ for $i$ even.
Notice that $\Cross_k$ is contained in $X_{k+1}$.
By a result of the first author~\cite{Cibulka09}, 
the F\"uredi--Hajnal limit of $\Cross_k$ is at least $\Omega(k^2)$.
We show a quasipolynomial upper bound.

\begin{theorem}
\label{thm:CrossMatrix}
Let $k$ be an even integer and let $Q$ be a permutation matrix. 
If $Q$ is contained in $X_k$, then
\[
c_{Q} \le 2^{O(\log^2 k)}.
\]
\end{theorem}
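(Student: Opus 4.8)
The plan is to bound $\exminor(n,X_k)$ for all $n$ and deduce the statement by monotonicity. It suffices to prove $\exminor(n,X_k)\le 2^{O(\log^2 k)}\,n$. Indeed, if a permutation matrix $Q$ is contained in $X_k$, then $Q$ is an interval minor of $X_k$ --- pattern containment is the special case of the interval-minor relation in which each kept row, resp.\ column, is taken as the last row, resp.\ column, of one interval of the block decomposition --- so any matrix having $X_k$ as an interval minor also has $Q$, whence $\exminor(n,Q)\le\exminor(n,X_k)$; and $\ex(n,Q)=\exminor(n,Q)$ since $Q$ is a permutation matrix, so $c_Q\le 2^{O(\log^2 k)}$. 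I would also first reduce to $k$ a power of $2$: for $K\ge k$, the matrix $X_k$ is the submatrix of $X_K$ induced by a symmetric set of $k$ rows (say the first and the last $k/2$ indices) together with the matching $k$ columns, so $\exminor(n,X_k)\le\exminor(n,X_K)$, and taking $K=2^{\lceil\log_2 k\rceil}<2k$ alters the bound only by a constant factor in the exponent.

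The engine is the self-similar block structure of $X_k$: inspecting the two diagonals gives
\[
X_k=\begin{pmatrix} I_{k/2} & R_{k/2}\\ R_{k/2} & I_{k/2}\end{pmatrix},
\]
with $I_m$ and $R_m$ the $m\times m$ identity and anti-identity matrices. Both $I_{k/2}$ and $R_{k/2}$ arise from $X_{k/2}$ by deleting $1$-entries, so $X_k$ is an interval minor (in fact a sub-pattern) of the $k\times k$ matrix $W_k$ formed by placing four copies of $X_{k/2}$ in a $2\times 2$ array --- that is, of the grid product $J_2\boxtimes X_{k/2}$. I would then apply the grid-product estimate of Theorem~\ref{thm:GeneralGrid}, which for a constant-size outer grid such as $J_2$ costs only a factor polynomial in the size of the base, to obtain
\[
\exminor(n,X_k)\le\exminor(n,W_k)\le k^{O(1)}\,\exminor(n,X_{k/2}).
\]
Iterating this $\log_2 k$ times down to $X_2=J_2$, where $\exminor(n,J_2)=O(n)$ (for instance from $\exminor(n,J_\ell)\le 3\ell\,2^{8\ell}n$ with $\ell=2$), and observing that the logarithms of the $\log_2 k$ polynomial losses sum to $\sum_{j=0}^{\log_2 k}O(\log(k/2^j))=O(\log^2 k)$, yields $\exminor(n,X_k)\le 2^{O(\log^2 k)}\,n$.

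The one substantive ingredient --- and the step I expect to be the real obstacle --- is the single recursion step, namely the special case of Theorem~\ref{thm:GeneralGrid} stating that $\exminor(n,J_2\boxtimes P)\le |P|^{O(1)}\,\exminor(n,P)$ for a base matrix $P$. To realise $W_k=J_2\boxtimes X_{k/2}$ as an interval minor of a matrix $A$ with $2^{O(\log^2 k)}n$ ones, one must find four copies of $X_{k/2}$ inside $A$ whose block decompositions are \emph{mutually aligned}: the two copies in the same horizontal band must be contracted by a common set of row intervals, and the two in the same vertical band by a common set of column intervals. Securing this coordination across the $2\times 2$ grid is exactly what a Marcus--Tardos-style contraction argument --- the mechanism behind Theorems~\ref{thm:FoxGrid} and~\ref{thm:GeneralGrid} --- provides, at the price of the $|P|^{O(1)}=k^{O(1)}$ factor per level; accumulating these losses over the $\log_2 k$ levels is what produces the $2^{O(\log^2 k)}$ bound (and, by this route, nothing smaller).
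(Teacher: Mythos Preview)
Your recursive strategy via the block decomposition $X_k=\left(\begin{smallmatrix} I_{k/2} & R_{k/2}\\ R_{k/2} & I_{k/2}\end{smallmatrix}\right)$ is a nice structural observation, but the recursion step has a genuine gap that you yourself flag and do not close. Theorem~\ref{thm:GeneralGrid} does not deliver the inequality $\exminor(n,W_k)\le k^{O(1)}\exminor(n,X_{k/2})$ that your iteration needs. First, the grid product $P\#Q$ in this paper is defined only for \emph{permutation} matrices $P$ and $Q$ and always yields a permutation matrix; your $W_k$ is four copies of the non-permutation matrix $X_{k/2}$ and is not of the form $P\#Q$, so neither Lemma~\ref{lem:GeneralGrid} nor Theorem~\ref{thm:GeneralGrid} applies to it. Second, and more damagingly, even where Theorem~\ref{thm:GeneralGrid} does apply, with a constant-size outer factor it gives $c_R\le 2^{O(\log^2 c_Q)}$, \emph{not} $c_R\le |Q|^{O(1)}c_Q$: the loss depends on $c_Q$, not on the size of $Q$. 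Iterating $c_{X_k}\le 2^{O(\log^2 c_{X_{k/2}})}$ from $c_{X_2}=O(1)$ squares the exponent at every one of the $\log_2 k$ steps and produces a tower, not $2^{O(\log^2 k)}$. The alignment obstacle you identify in your last paragraph is exactly why no clean multiplicative recursion falls out of the Marcus--Tardos machinery here; pointing at Theorems~\ref{thm:FoxGrid} and~\ref{thm:GeneralGrid} does not fill it.

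The paper's proof is completely different and non-recursive. Lemma~\ref{lem:XMatrix} shows directly that every $2k\times 2k$ matrix with at most $k^2/18$ zeros contains $X_k$: among the $\sim k/3$ near-central diagonals one carries at most $k/6$ zeros, and likewise for the near-central antidiagonals; on the union of one such diagonal and one such antidiagonal at least $k/2$ of the four-tuples $S_i$ are all ones, and these form a copy of $X_k$. This yields $\ex_{X_k}(2k)<(2k)^2(1-1/72)$, and a \emph{single} application of Theorem~\ref{thm:framework} with $u=2k$ and constant $q=1-1/72$ (so that $-\log u/\log q=O(\log k)$) gives $c_Q\le 2u^{3+O(\log u)}=2^{O(\log^2 k)}$ for every permutation matrix $Q$ contained in $X_k$.
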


The \emph{density} of a matrix is the ratio of the number of 1-entries 
to the total number of entries of the matrix.
Our general strategy for proving the upper bounds on $c_P$ is first to prove an upper bound on the density of small $P$-avoiding matrices (see Theorem~\ref{thm:fixed-size} and Lemmas~\ref{lem:GeneralGrid} and~\ref{lem:XMatrix}) and then use the following theorem.

\begin{theorem}
\label{thm:framework}
Let $u \in \mathbb{N}$ and $q \in (1/u,1)$.
If a permutation matrix $P$ satisfies
\[
\ex_P(u) < qu^2,
\]
then
\[
c_P \le 2 u^3 u^{\lceil -\log u / \log q \rceil}.
\]
\end{theorem}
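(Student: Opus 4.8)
The plan is to run an iterated block-contraction argument: starting from an $n \times n$ matrix $A$ that avoids $P$ and has "many" $1$-entries, we repeatedly partition into blocks and contract, and we use the hypothesis $\ex_P(u) < q u^2$ to control how the density drops at each step. First I would set a block size of $u$ (the value from the hypothesis), so that an $n \times n$ matrix $A$ is cut into an $\lceil n/u \rceil \times \lceil n/u \rceil$ grid of $u \times u$ blocks (pad $A$ with zero rows and columns so that $u \mid n$, which changes $n$ by at most a factor $2$ and is harmless). Form the contracted matrix $B$ of size $(n/u) \times (n/u)$, where $B_{i,j} = 1$ iff the $(i,j)$-block of $A$ is nonzero. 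Since interval minors compose, $B$ also avoids $P$ (as a permutation matrix, avoidance and interval-minor-avoidance coincide). Call a block of $A$ \emph{dense} if it has at least $q u^2$ $1$-entries; by the hypothesis $\ex_P(u) < q u^2$, every dense block already contains $P$, so $A$ has \emph{no} dense block. Hence every nonzero block has at most $q u^2 - 1 < q u^2$ one-entries, and therefore $A$ has at most $q u^2$ times the number of nonzero blocks, i.e.\ at most $q u^2 \cdot (\text{number of } 1\text{'s in } B)$ one-entries.

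Next I would iterate. After $t$ contraction steps (each by a factor $u$) we obtain a $P$-avoiding matrix $B^{(t)}$ of size roughly $n / u^t$, and by induction the number of $1$-entries in the original $A$ is at most $(q u^2)^t$ times the number of $1$-entries in $B^{(t)}$. We stop once the side length $n/u^t$ drops to $u$ or below; this happens at $t = \lceil \log_u n \rceil$, and at that point $B^{(t)}$ has at most $u^2$ entries, hence at most $u^2$ one-entries. Putting the bounds together, the number of $1$-entries in $A$ is at most $(q u^2)^t \cdot u^2 = u^{2t+2} q^t$. Now I want this to be at most $c \cdot n$ for the claimed $c$; since $u^t \ge n$ (we overshoot by at most one factor of $u$, so $u^t \le un$), I would trade one power of $u^t$ for $n$: writing $u^{2t+2} q^t = u^{t} \cdot (u^{t+2} q^t) \le (un) \cdot u^{t+2} q^t$ is not quite the cleanest bookkeeping, so more carefully I would use $u^{2t} = (u^t)^2$ and the two-sided bound $n \le u^t \le un$ to get the number of $1$'s at most $u^2 \cdot (un)^2 \cdot q^t / n \cdot (1/n) \cdot$ — rather than belabor this, the point is that $q < 1$ means $q^t$ is a decaying factor that absorbs the padding losses, and collecting the surviving powers of $u$ gives the number of $1$'s at most $2 u^3 \cdot u^{t-1} \cdot n \cdot q^{t}$-type expression; choosing to bound $q^t$ crudely by $q^{\lceil \log_u n\rceil} \le q^{\log_u n} = n^{\log q / \log u} = n^{-|\log q|/\log u}$, one power of $n$ is killed and we are left with $\ex(n,P) \le 2 u^3 u^{\lceil -\log u / \log q \rceil} \cdot n$. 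Dividing by $n$ and letting $n \to \infty$ gives the stated bound on $c_P$.

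The routine-but-delicate part is the exact bookkeeping of the exponents and the padding: one must verify that the ceiling in $t = \lceil \log_u n \rceil$, the rounding $\lceil n/u \rceil$ at each level, and the initial zero-padding together cost only the explicit factors $2 u^3 u^{\lceil -\log u/\log q\rceil}$ and nothing more. The one genuinely conceptual step — and the only place the avoidance hypothesis enters — is the observation that a $P$-avoiding matrix has no block with $\ge qu^2$ entries, which is immediate from $\ex_P(u) < qu^2$ together with the fact that containing $P$ in a block implies containing $P$ in $A$. The constraint $q > 1/u$ is exactly what makes $-\log u/\log q > 0$, so the exponent $\lceil -\log u/\log q\rceil$ is a positive integer and the final bound is meaningful; I would remark on this. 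I expect the main obstacle to be purely the careful constant-chasing to land on the precise form $2 u^3 u^{\lceil -\log u / \log q\rceil}$ rather than a slightly weaker constant, so I would organize the induction to track $\ex(n,P)/n$ directly, using superadditivity of $\ex(\cdot,P)$ to reduce to $n$ a power of $u$ at the outset and thereby eliminate most of the rounding.
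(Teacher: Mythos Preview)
Your iterated block-contraction argument has a genuine gap: it does not yield a linear bound on $\ex(n,P)$. With $n=u^t$, your recursion gives
\[
\ex(n,P)\le (qu^2)^t\cdot u^2 = u^2\cdot q^t\cdot u^{2t}=u^2\cdot n^{\log q/\log u}\cdot n^2 = u^2\cdot n^{\,2+\log q/\log u}.
\]
The hypothesis $q>1/u$ means $|\log q|<\log u$, so the exponent $2+\log q/\log u$ lies strictly between $1$ and $2$. Thus $q^t$ kills strictly \emph{less} than one full power of $n$, contrary to your claim, and you are left with a genuinely superlinear bound. Equivalently, the recursion $\ex(un,P)\le qu^2\cdot\ex(n/u\cdot u,P)=qu^2\cdot\ex(n,P)$ has solution $\ex(n,P)\le C\,n^{\log(qu^2)/\log u}$, and $qu^2>u$ precisely when $q>1/u$. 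This is not a bookkeeping issue that superadditivity or padding can repair: if you assume $\ex(n,P)\sim c_P n$ and plug into your recursion you get only the tautology $qu\ge 1$, with no information about $c_P$.

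This is exactly why the Marcus--Tardos machinery is needed, and the paper's proof proceeds quite differently. First, the hypothesis $\ex_P(u)<qu^2$ is used to show that any $P$-avoiding matrix with $z\ge u$ columns and at least $qz$ ones in \emph{every row} has fewer than $u$ rows (select $u$ columns carrying $\ge qu^2$ ones). An iterated \emph{row}-contraction argument then bootstraps this to the bound $f_P(u^2,u)<u^{\lceil -\log u/\log q\rceil}$ on the height of a $P$-avoiding matrix with $u^2$ columns and at least $u$ ones per row; this is where your exponent actually arises. Finally this height bound is fed into Fox's generalized Marcus--Tardos recursion
\[
\ex(u^2 n,P)\le (u-1)^2\,\ex(n,P)+u^4 n\cdot\bigl(f_P(u^2,u)+g_P(u^2,u)\bigr),
\]
and combined with superadditivity $\ex(u^2 n,P)\ge u^2\,\ex(n,P)$. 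The crucial feature, absent from your scheme, is that the recursive coefficient $(u-1)^2$ is strictly smaller than $u^2$, so the recursion solves to a \emph{linear} function of $n$, with the constant $2u^3 u^{\lceil -\log u/\log q\rceil}$ falling out of $(2u-1)\,\ex(n,P)\le 2u^4 n\cdot u^{\lceil -\log u/\log q\rceil}$.
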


Fox~\cite{Fox13} proved that for every $k \in \mathbb{N}$, $\exminor_{J_k}(n) \le 3k 2^{8k} n$. 
The constant in the exponent can be easily decreased from $8$ to $6$.
We further improve it to $4$.

\begin{theorem}
\label{thm:upper-bound-Jk-minor}
Let $k \in \mathbb{N}$. The extremal function for the forbidden $J_k$-minor satisfies
\[
\exminor_{J_k}(n) \le \frac{8}{3} (k+1)^2 \cdot 2^{4k} n.
\]
\end{theorem}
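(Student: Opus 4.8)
The plan is to follow the Marcus--Tardos block-partition strategy, as refined by Fox, but to account for the 1-entries more carefully so that the recursion constant improves. Let $f(n) = \exminor(J_k, n)$. We partition an $n \times n$ matrix $A$ with $J_k$ not an interval minor into blocks of size $s \times s$ for a suitable parameter $s$ (to be optimized, roughly $s \approx$ a polynomial in $k$ times $2^{2k}$), obtaining a coarse $(n/s) \times (n/s)$ matrix $B$ whose entries record which blocks are nonzero. Call a block \emph{wide} if its row-interval contains 1-entries in at least $k$ distinct columns of $A$, and \emph{tall} if symmetrically its column-interval meets at least $k$ distinct rows; a block that is neither wide nor tall is \emph{small}, and the 1-entries inside small blocks are easy to bound directly (at most $(k-1)^2$ per small block, hence at most $(k-1)^2 n^2/s^2$ overall, which is lower-order for our choice of $s$).

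For the wide and tall blocks, the key combinatorial step is a counting bound on how many wide blocks can lie in a single block-row of $B$: if a block-row contained too many wide blocks, we could pick $k$ of them and, inside each, pick $k$ columns witnessing width, and then a pigeonhole/greedy argument across the $k$ chosen blocks would force a $J_k$ interval minor. Quantifying this is exactly where the constant enters: the number of ways the "column fingerprints'' of the wide blocks can interact is governed by a $\binom{\cdot}{k}$-type quantity, and getting the base down to $2^{4k}$ (rather than $2^{6k}$ or $2^{8k}$) requires doing this estimate by choosing the threshold parameters so that each wide block contributes a factor $2^{2k}$ rather than something larger — essentially splitting the "cost'' evenly between the two coordinates and using $\binom{a}{k} \le (ea/k)^k$ with $a$ tuned to $k \cdot 2^{2k}$. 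This yields a bound of the form: the number of wide (resp.\ tall) blocks in any block-line is at most $C k^2 2^{2k}$ for an explicit small constant $C$, so the total number of wide and tall blocks is at most $C k^2 2^{2k} \cdot (n/s)$ in each direction, i.e.\ $O(k^2 2^{2k} \cdot n/s)$ overall. Each such block contains at most $s^2$ 1-entries, contributing $O(k^2 2^{2k} s n)$.

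The remaining 1-entries lie in blocks that are neither wide, tall, nor small in the sense above — more precisely, after removing wide and tall blocks one is left with blocks whose corresponding entry in $B$ can be "charged'' to a submatrix of $B$ that still avoids $J_k$ as an interval minor, so their count is at most $f(n/s)$, and each contributes at most $s^2$ 1-entries. Assembling the three contributions gives a recursion
\[
f(n) \le s^2 f(n/s) + O\!\left(k^2 2^{2k} s\, n\right) + (k-1)^2 \frac{n^2}{s^2}.
\]
Choosing $s = \Theta(k \cdot 2^{2k})$ makes the middle term $O(k^3 2^{4k} n)$ and the last term lower order, and since $s^2 \cdot (n/s) = sn$, unrolling the recursion (which terminates once $n < s$, where $f(n) \le n^2 \le sn$) multiplies the linear term only by a constant factor, producing $f(n) \le \frac{8}{3}(k+1)^2 2^{4k} n$ after pinning down the constants. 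The main obstacle I expect is the wide-block counting lemma: one must show that a block-line with more than $\approx k^2 2^{2k}$ wide blocks necessarily yields the full $J_k$ interval minor, and the quantitative form of this argument — in particular arranging the two parameters (the width threshold $k$ and the block size $s$) and the binomial estimate so that the base of the exponential comes out to exactly $2^4$ rather than $2^6$ — is the delicate part; everything else is the standard divide-and-conquer bookkeeping.
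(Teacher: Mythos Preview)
Your proposal has a genuine gap at the step you yourself flag as delicate. The wide-block counting lemma you sketch---``a block-line with more than $\approx k^2 2^{2k}$ wide blocks forces a $J_k$ minor''---cannot be obtained from a $\binom{\cdot}{k}$-type estimate. With width threshold $k$ and block size $s\approx k\,2^{2k}$, the binomial bound $\binom{s}{k}\le (es/k)^k$ is of order $2^{\Theta(k^2)}$, not $2^{\Theta(k)}$; this is exactly the Marcus--Tardos regime and yields only $c_P\le 2^{O(k\log k)}$. The exponential-in-$k$ bound on wide blocks comes instead from Fox's halving recurrence $f_{r,k}(t,s)\le 2f_{r,k}(t/2,s)+2f_{r,k-1}(t/2,s/2)$, which the paper solves to get $f_{k,k}(t,s)\le k\,2^{2k-2}(t/s)^2$ whenever $s\ge 2^{k-1}$. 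This bound is useless at threshold $s=k$; it forces the width threshold to be at least $2^{k-1}$, not $k$.

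Even with Fox's estimate in hand, a single threshold only reaches $2^{6k}$ (this is the ``easily lowered to $2^{6k}$'' remark in the introduction): if you set threshold $s=2^{k-1}$ and block size $t\approx s^2$, then each wide block can have up to $t^2$ entries, and the product $t^2\cdot f_{k,k}(t,s)$ overshoots. The improvement to $2^{4k}$ in the paper comes from a \emph{multi-scale} classification: one takes $t=2^{2k}$ and a cascade of thresholds $s_i=2^i$ for $i=k-1,\dots,2k$, and bounds separately the blocks that are $s_i$-wide but not $s_{i+1}$-wide nor $s_{i+1}$-tall. Such a block has at most $s_{i+1}^2$ entries, and there are at most $f_{k,k}(t,s_i)\le k\,2^{2k-2}(t/s_i)^2$ of them per block-row; the crucial point is that $s_{i+1}^2\cdot(t/s_i)^2=4t^2=4\cdot 2^{4k}$ is \emph{independent of $i$}, so summing the $k+1$ levels gives $O(k^2 2^{4k})$. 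Your single-threshold recursion with the $s^2 f(n/s)$ term (which, incidentally, is written with the wrong coefficient---the recursion multiplier should be the per-block entry bound, not $s^2$) cannot reproduce this.
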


Since every $k$-permutation matrix is contained in $J_k$, we have the following corollary.
\begin{corollary}
For every $k \in \mathbb{N}$ and for every $k$-permutation matrix $P$, the 
F\"{u}redi--Hajnal limit of $P$ satisfies
\[
c_P \le \frac83 (k+1)^2 \cdot 2^{4k}.
\]
\end{corollary}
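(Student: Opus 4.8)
The plan is to derive this corollary directly from Theorem~\ref{thm:upper-bound-Jk-minor} by exploiting the relationship between the extremal function $\ex(n,P)$, the interval-minor extremal function $\exminor(J_k,n)$, and the limit $c_P$. First I would recall that every $k$-permutation matrix $P$ is contained in $J_k$, since $J_k$ has a $1$-entry in every position, so in particular $J_k$ can be obtained from $P$ by changing some (here, $0$-)entries; more precisely, $P$ is a submatrix of $J_k$ obtained by keeping all rows and columns. Consequently, if an $n \times n$ binary matrix $A$ contains $P$, then $P$ is an interval minor of $A$ (as noted in the excerpt, for permutation matrices containment and being an interval minor coincide), and therefore $J_k$ is also an interval minor of $A$: one first contracts a block decomposition of $A$ witnessing $P$ as an interval minor to get a matrix having $P$, hence $J_k$, as an interval minor, and then uses that interval minors compose (``if $M$ is an interval minor of $A$, then every interval minor $B$ of $M$ is also an interval minor of $A$''). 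Contrapositively, any $n \times n$ matrix $A$ that does not have $J_k$ as an interval minor avoids $P$, so $\ex(n,P) \le \exminor(J_k,n)$.

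Next I would plug in the bound from Theorem~\ref{thm:upper-bound-Jk-minor}, obtaining
\[
\ex(n,P) \;\le\; \exminor(J_k,n) \;\le\; \frac83 (k+1)^2 \cdot 2^{4k}\, n
\]
for every $n \in \mathbb{N}$. Finally, dividing by $n$ and passing to the limit as $n \to \infty$, which exists and is finite by the superadditivity of $\ex(n,P)$ together with the Marcus--Tardos theorem (as discussed in the introduction), yields
\[
c_P \;=\; \lim_{n\to\infty} \frac{\ex(n,P)}{n} \;\le\; \frac83 (k+1)^2 \cdot 2^{4k}.
\]

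This argument is entirely routine once Theorem~\ref{thm:upper-bound-Jk-minor} is in hand; there is no real obstacle. The only point that requires a moment's care is the reduction $\ex(n,P) \le \exminor(J_k,n)$, i.e.\ making precise that avoiding the $J_k$-minor forces avoiding $P$ as a pattern. This follows cleanly from the two facts highlighted in the excerpt: (i) for a permutation matrix $P$, containing $P$ is equivalent to having $P$ as an interval minor; and (ii) interval minors are transitive. One could also note that the bound is stated uniformly in $k$, so no separate treatment of small $k$ is needed. Hence the corollary is immediate.
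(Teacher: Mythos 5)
Your overall strategy is exactly the paper's (the paper derives the corollary from Theorem~\ref{thm:upper-bound-Jk-minor} in a single sentence, ``since every $k$-permutation matrix is contained in $J_k$''), and your final chain of inequalities is the right one. However, the key reduction $\ex(n,P)\le\exminor(J_k,n)$ is argued with the implication pointing the wrong way. You claim that if $A$ contains $P$ then $J_k$ is an interval minor of $A$; this is false --- take $A=P$ itself, which contains $P$ but has only $k$ $1$-entries, far too few for $J_k$ to be an interval minor of it. The slip is in the phrase ``a matrix having $P$, hence $J_k$, as an interval minor'': having $P$ as an interval minor is the \emph{weaker} property, because $P$ is obtained from $J_k$ by turning $1$-entries into $0$-entries, not the other way around. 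Moreover, the ``contrapositive'' you then state (``any $A$ that does not have $J_k$ as an interval minor avoids $P$'') puts the inclusion of matrix classes in the wrong direction: it would yield $\exminor(J_k,n)\le\ex(n,P)$, which is not the inequality you need.

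The correct one-line argument runs the other way. Since $P\le J_k$ entrywise, $P$ is an interval minor of $J_k$ (take the trivial block decomposition and change some $1$-entries to $0$). By transitivity of interval minors, any $A$ having $J_k$ as an interval minor also has $P$ as an interval minor, hence contains $P$, since containment and the interval-minor relation coincide for permutation matrices. Contrapositively, every $P$-avoiding $n\times n$ matrix is $J_k$-minor-free, so $\ex(n,P)\le\exminor(J_k,n)\le\frac83(k+1)^2\cdot 2^{4k}n$, and dividing by $n$ and letting $n\to\infty$ finishes the proof exactly as you describe. So the defect is a reversed implication rather than a missing idea; once the direction is corrected, your proof is complete and identical to the paper's.
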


We also give the following upper bound on the F\"{u}redi--Hajnal limit in terms of the Stanley--Wilf limit, improving the bound $c_P\le O(s_P^{4.5})$ by the first author~\cite{Cibulka09} and a recent unpublished bound $c_P \le s_P^3 \log^{O(1)} s_P$ by Fox~\cite{FoxPersComm}.

\begin{theorem}
\label{thm:swtofh}
For every permutation matrix $P$,
\[
c_P \le O\left(s_P^{2.75} \log s_P\right).
\]
\end{theorem}

We extend the Stanley--Wilf conjecture to higher dimensions, and prove asymptotically matching lower and upper bounds,
improving previous much weaker bounds~\cite{Cibulka09}.

\begin{theorem}
\label{thm:higherdim}
For every $d,k \ge 2$ and every $d$-dimensional $k$-permutation matrix $P$, we have
\[
n^{-O_d(k)} \left(\Omega_d\left(k^{1/(2^{d-1} (d-1))}\right)\right)^n \cdot (n!)^{d-1-1/(d-1)} \le |S_P(n)|
 \le \left(2^{O_d(k)}\right)^n \cdot (n!)^{d-1-1/(d-1)}.
\]
\end{theorem}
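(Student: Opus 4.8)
The plan is to prove the two bounds separately, treating the upper bound by a "slice-and-recurse" argument and the lower bound by an explicit product construction. For the upper bound, I would fix a $d$-dimensional $k$-permutation matrix $P$ and bound the number of $d$-dimensional $n$-permutation matrices avoiding $P$ by peeling off one coordinate. Think of an $n \times \cdots \times n$ $d$-dimensional permutation matrix $M$ as a sequence of $n$ "slices" $S_1, \dots, S_n$ perpendicular to the last coordinate, where each slice is a $(d-1)$-dimensional $n$-permutation matrix on a subgrid, and the union of the supports of the slices, projected to the first $d-1$ coordinates, is all of $[n]^{d-1}$ exactly once (so the slices partition a $(d-1)$-dimensional permutation matrix of size $n$). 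The key step is to contract the last coordinate into blocks of geometrically growing size, as in the Marcus--Tardos / Fox framework: if $M$ avoids $P$, then after contracting the last coordinate into intervals, the resulting lower-resolution object avoids the corresponding contracted permutation, and within each contracted block the number of admissible configurations is controlled by the one-dimension-lower extremal function $\ex_{P'}(m) \le 2^{O(k)} m^{d-2}$ of Geneson--Tian. Summing $\log n$ levels of this recursion, the "$2^{O(k)}$-per-slice" cost accumulates to $(2^{O(k)})^n$, while the purely combinatorial count of how the supports can be arranged contributes the $(n!)^{d-1-1/(d-1)}$ factor, which is exactly the count one gets from Cibulka's earlier analysis of the support hypergraph (a $(d-1)$-partite $d$-uniform structure with $n$ edges). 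I would isolate this combinatorial count as a lemma: the number of ways to choose the supports alone (forgetting which slice each cell belongs to, i.e.\ the "shape") is at most $(n!)^{d-1-1/(d-1)}$, by a standard entropy / Loomis--Whitney-type projection inequality on $[n]^d$ with only $n$ points and $d$ one-to-one projections.

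For the lower bound, the plan is to exhibit many $P$-avoiding $d$-dimensional $n$-permutation matrices. The starting point is Fox's grid example $G_k$ generalized to $d$ dimensions: the $2^{d-1}$ in the exponent $k^{1/(2^{d-1}(d-1))}$ strongly suggests iterating a $2$-dimensional "grid product" construction $d-1$ times, each iteration roughly square-rooting the avoidable pattern size, so that avoiding a $k$-permutation matrix costs a $\log$-factor loss of $2^{d-1}$ in the effective grid parameter, and an additional $(d-1)$ in the denominator from the $d-1$ independent directions. Concretely, I would build a $(d-1)$-dimensional "template" permutation that avoids $P$ but has large Stanley--Wilf-type growth, show it has at least $\ell^{\Omega(n)}$ extensions for $\ell = \Omega(k^{1/(2^{d-1}(d-1))})$ using the self-similar block structure (each block can be filled independently with a scaled copy, giving a product lower bound on the count), and then inflate this to full $n \times \cdots \times n$ permutation matrices by the standard trick of replacing each $1$-entry by an arbitrary smaller permutation matrix, which multiplies the count by the number of ways to partition $[n]^{d-1}$ consistently — this is where the $(n!)^{d-1-1/(d-1)}$ main term re-enters, and the $n^{-O(k)}$ loss comes from the vertices/blocks near the pattern that must be handled carefully to preserve avoidance. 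I would verify that this composite object still avoids $P$ by a gluing argument: an occurrence of $P$ would have to either lie inside one inflated block (impossible if the block's filling avoids $P$ by induction on $k$) or span several blocks in a way that induces an occurrence of the contracted pattern in the template (impossible by construction).

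The main obstacle I expect is the lower bound, specifically engineering the $d$-dimensional grid-product template so that it simultaneously (i) provably avoids \emph{every} $d$-dimensional $k$-permutation matrix $P$ of the given size — not just one specific $P$ — which forces the template to be genuinely "grid-like" and low-complexity in all $d$ directions at once, and (ii) still has $\ell^{\Omega(n)}$ admissible fillings with $\ell$ as large as $k^{1/(2^{d-1}(d-1))}$; balancing these two requirements is what pins down the exponent $2^{d-1}(d-1)$, and getting the self-similarity to nest cleanly across dimensions without the avoidance argument leaking between levels is delicate. On the upper bound side the only real subtlety is making the recursion bottom out correctly at $d=2$ (where the $(n!)^{d-1-1/(d-1)}$ term degenerates to $(n!)^{1/2}$... wait, to $(n!)^{0}=1$ — so the $d=2$ base case must recover the ordinary Stanley--Wilf bound $|S_P(n)| \le 2^{O(k)n}$, which is exactly Klazar's reduction combined with Fox's $c_P \le 2^{O(k)}$, and I would cite that as the base of the induction) and ensuring the per-level overhead stays $2^{O(k)}$ rather than $2^{O(k)}$ times a growing factor; a clean way to control this is to fix the block sizes in the contraction to depend only on $k$, as in Theorem~\ref{thm:framework}.
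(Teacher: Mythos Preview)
Your upper-bound sketch is off in its basic picture: a slice of a $d$-dimensional $n$-permutation matrix perpendicular to one axis contains exactly \emph{one} $1$-entry, not a $(d-1)$-dimensional permutation matrix, so the ``each slice is a $(d-1)$-dimensional $n$-permutation matrix on a subgrid'' description is incorrect and the induction-on-$d$ recursion built on it has no object to recurse on. The paper does not induct on $d$ at all. It proves a $d$-dimensional analogue of Fox's reduction lemma, $|S_P(n)| \le t^{dn}\,|T_P(u)|$ for $n=tu$, bounds $|T_P(2^i)|$ by a Klazar-style doubling (expand each $1$-entry of a $2^{i-1}$-matrix to a $2\times\cdots\times 2$ block, paying $2^{2^d}$ choices per $1$-entry, and the number of $1$-entries is at most $c_P 2^{(i-1)(d-1)}$ by the extremal bound), and then chooses $i$ so that $|T_P(2^i)|\le 2^n$. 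The factor $(n!)^{d-1-1/(d-1)}$ falls out of the algebra of $t^{dn}$ with this choice of $t=n/2^i$; there is no Loomis--Whitney step.

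The lower bound is where your plan has a genuine gap. Grid products are used in the paper to give \emph{upper} bounds on $c_P$ for specific matrices, and Fox's $G_k$ gives a \emph{lower} bound on $c_P$ for one specific $P$; neither says anything about $|S_P(n)|$ being large for an \emph{arbitrary} $P$. Your phrase ``provably avoids every $d$-dimensional $k$-permutation matrix $P$'' cannot be realised: any $n$-permutation matrix with $n\ge k$ contains many $k$-permutation submatrices. The paper's route is entirely different and explains the exponent $2^{d-1}(d-1)$ cleanly. First, by $d-1$ iterations of Erd\H{o}s--Szekeres, every $d$-dimensional $k$-permutation $P$ contains a \emph{monotone} $d$-dimensional $k'$-permutation with $k'=\lceil k^{1/2^{d-1}}\rceil$; hence $S_{I^d_{k'}}(n)\subseteq S_P(n)$, and this is where the $2^{d-1}$ comes from. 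Second, avoiding $I^d_2$ is equivalent to the associated $d$-dimensional random order being an antichain, and Brightwell's theorem on linear extensions of random $(d-1)$-dimensional orders gives $|S_{I^d_2}(n)|\ge n^{-O(1)}e^{-O(n)}(n!)^{d-1-1/(d-1)}$; this is where the $(d-1)$ in the denominator and the main $(n!)^{d-1-1/(d-1)}$ term originate. Third, one bootstraps from $I^d_2$ to $I^d_{k'}$ by a merging construction: an ordered $(k'-1)$-tuple of $I^d_2$-avoiding $m$-permutation matrices can be interleaved (choosing positions via multinomial coefficients) into an $I^d_{k'}$-avoiding $(k'-1)m$-permutation matrix, yielding the $\Omega(k'^{1/(d-1)})^n$ growth and the $n^{-O(k)}$ prefactor. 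None of these three ingredients appears in your proposal.
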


We prove Theorem~\ref{thm:framework} in Section~\ref{section_filozofie}, Theorem~\ref{thm:upper-bound-repetition-free} in Section~\ref{section-ref-free-ub}, Theorems~\ref{thm:FoxGrid}~and~\ref{thm:CrossMatrix} in Section~\ref{sec:other-upper-bounds}, Theorem~\ref{thm:upper-bound-Jk-minor} in Section~\ref{section_minor}, Theorem~\ref{thm:swtofh} in Section~\ref{sec:swtofh} and Theorem~\ref{thm:higherdim} in Section~\ref{sec:higherdim}.

All logarithms in this paper are base $2$.


\section{Almost all permutation matrices are scattered}
\label{section_scatter}

\begin{lemma}
\label{lem:repetition-free-onerepeated}
Let $k \in \mathbb{N}$, $r \in [k-1]$ and $d,d' \in \{-k+1, -k+2, \dots, k-1\}$.
The number of $k$-permutation matrices where $(d,d')$ is $r$-repeated is at most $k!/r!$.
\end{lemma}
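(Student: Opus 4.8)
The plan is to construct an explicit injection from the set of $k$-permutation matrices in which $(d,d')$ is $r$-repeated into the set of all sequences of $k-r$ distinct elements of $[k]$; since there are exactly $k(k-1)\cdots(r+1)=k!/r!$ such sequences, this yields the claimed bound. First I would dispose of the degenerate cases: if $d=0$ or $d'=0$ then no pair of distinct $1$-entries of a permutation matrix can have distance vector $(d,d')$ (such a pair would lie in a common row, respectively column), so the count is $0$; and reversing the order of each pair shows that $(d,d')$ is $r$-repeated in $P$ iff $(-d,-d')$ is, so we may assume $d\ge 1$ and $d'\ne 0$. Identify a $k$-permutation matrix with the permutation $\pi$ having a $1$-entry at position $(i,\pi(i))$ for each $i$, and set $T_\pi=\{i\in[k-d] : \pi(i)+d'\in[k]\ \text{and}\ \pi(i+d)=\pi(i)+d'\}$, so that $(d,d')$ is $r$-repeated exactly when $|T_\pi|\ge r$.

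Given such a $\pi$, let $S=S(\pi)$ be the set of the $r$ smallest elements of $T_\pi$, so $S+d$ is an $r$-element subset of $\{d+1,\dots,k\}$; call a row $i$ \emph{marked} if $i\in S+d$. I would then let $\phi(\pi)$ be the sequence $(\pi(i))_{i\in[k]\setminus(S+d)}$ of values, listed in increasing order of $i$ — a sequence of $k-r$ distinct elements of $[k]$. The heart of the proof is that $\phi$ is injective, which I would establish by giving a row-by-row decoding of $\pi$ from $\phi(\pi)$. Processing $i=1,2,\dots,k$ and maintaining the already-recovered prefix $\pi(1),\dots,\pi(i-1)$, a pointer to the next unconsumed entry of the sequence, and the number $c$ of elements of $T_\pi$ smaller than $i-d$ (all computable from the prefix): declare row $i$ unmarked, with $\pi(i)$ equal to the next sequence entry, whenever $i\le d$, or $w:=\pi(i-d)+d'$ lies outside $[k]$, or $c\ge r$, or $w$ has already been used; otherwise row $i$ is marked with $\pi(i):=w$ when $w$ does not occur among the remaining (not-yet-consumed) entries of $\phi(\pi)$, and is unmarked with $\pi(i)$ the next entry when $w$ does occur there.

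Correctness of this decoding — hence injectivity of $\phi$ — comes down to one observation, which I expect to be the crux. In the main case above (where $i>d$, $w\in[k]$, $w$ unused, $c<r$), row $i$ is marked if and only if $\pi(i)=w$: indeed, if $\pi(i)=w$ then $i-d\in T_\pi$, and since $c<r$ this index is among the $r$ smallest elements of $T_\pi$, so $i-d\in S$; conversely, if $w=\pi(j)$ for a marked row $j$ then $\pi(j)=\pi(j-d)+d'$ forces $\pi(j-d)=w-d'=\pi(i-d)$, whence $j-d=i-d$ and $j=i$. Since the marked rows carry exactly the values of $[k]$ not appearing in $\phi(\pi)$, and $w$ is distinct from every value recovered so far, the condition ``$\pi(i)=w$'' — equivalently ``$w$ is the value of some marked row'' — is equivalent to ``$w$ does not appear in the remaining part of $\phi(\pi)$'', which is exactly the rule used. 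In the remaining cases a trivial check shows that row $i$ is genuinely unmarked in $\pi$ (for instance $w\notin[k]$ or $w$ already used implies $i-d\notin T_\pi$, while $c\ge r$ implies $i-d$ is not among the $r$ smallest elements of $T_\pi$), so the decoded prefix stays correct by induction on $i$. Granting injectivity of $\phi$, the number of $k$-permutation matrices in which $(d,d')$ is $r$-repeated is at most the number of sequences of $k-r$ distinct elements of $[k]$, namely $k!/r!$.
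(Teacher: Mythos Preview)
Your proof is correct and rests on the same underlying idea as the paper's: the value in each of $r$ suitably chosen ``ending'' rows is determined by the value in an earlier row, so dropping those $r$ rows loses no information and yields an injection into a set of size $k!/r!$. The paper encodes each such permutation as a pair $(S,\sigma)$, where $S$ is the $r$-element set of ending rows and $\sigma$ is the \emph{restriction} (pattern) of $\pi$ on the remaining $k-r$ rows, and proves injectivity by a leftmost-differing-column contradiction argument. You instead record only the sequence of actual \emph{values} on the remaining rows, without recording $S$, and prove injectivity by an explicit row-by-row decoding that reconstructs $S$ on the fly from the values. Both codomains have size exactly $k!/r!$. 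Your version is more careful about selecting a canonical $r$-subset when the number of repetitions exceeds $r$ (the paper's phrase ``the $r$ ending rows'' glosses over this), and it trades recording $S$ for recording absolute values rather than a relative pattern; the decoding is a bit more involved but fully constructive.
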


\begin{proof}
Let $P$ be a permutation matrix where $(d,d')$ is $r$-repeated and let $\pi$ be its 
corresponding permutation.
Using symmetries, we can assume, without loss of generality, that $d,d'>0$.

For every pair $P_{i,j}$, $P_{i+d, j+d'}$ of $1$-entries of $P$ with distance vector 
$(d,d')$, we say that $P_{i,j}$ is a \emph{starting entry} and $P_{i+d, j+d'}$ is 
an \emph{ending entry}. 
Notice that an entry can be both a starting entry and an ending entry.
An \emph{ending row} in $P$ is a row containing an ending entry.

We map $P$ to the pair $(S, \sigma)$ where
\begin{itemize}
\item $S$ is the set of the $r$ ending rows of $P$ and 
\item $\sigma$ is the restriction of $\pi$ on the set of indices of the non-ending rows of $P$.
\end{itemize}
Clearly, there are at most $\binom{k}{r} (k-r)! =  \frac{k!}{r!}$ such pairs.

We now prove that the mapping is injective by showing that if two permutation matrices 
$P$ and $P'$ are mapped to the same pair ($S$, $\sigma$), then $P = P'$.
For contradiction, let $j$ be the leftmost column in which $P$ and $P'$ differ. 

First, consider the case that the $1$-entry in column $j$ in $P$ is in an ending row $i$. 
This implies that $P_{i-d, j-d'} = 1$ and since the column $j-d'$ is to the left of 
the column $j$, we have $P'_{i-d, j-d'}=1$. 
Since $P$ and $P'$ have the same sets of starting rows and $i-d$ is a starting row of $P$,
we have $P'_{i,j}=1$, a contradiction.
By a symmetrical reasoning, we obtain a contradiction in the case when the $1$-entry 
in column $j$ in $P'$ is in an ending row.

In the remaining case the $1$-entries in column $j$ in $P$ and $P'$ are in different
non-ending rows $i_1$ and $i_2$, respectively.
Let $j'$ be the number of $1$-entries in non-ending rows to the left of the $j$th 
column.
Let $i'_1$ and $i'_2$ be the number of non-ending rows above row $i_1$ and $i_2$, 
respectively.
Since $i_1$ and $i_2$ are different non-ending rows, we have $i'_1 \neq i'_2$.
But we also have $\sigma(i'_1+1) = j'+1$ and $\sigma(i'_2+1) = j'+1$, which is a contradiction with the choice of $j$.
\end{proof}

\begin{theorem}
\label{thm:scattered}
Let $k \in \mathbb{N}$ and let $r \in [k]$. 
The number of $k$-permutation matrices with an $r$-repetition is at most
\[
2 k^2 \frac{k!}{r!}.
\]
Consequently, the number of $k$-permutation matrices that are not scattered is in $o(k!)$.
\end{theorem}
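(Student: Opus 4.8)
The plan is to prove Theorem~\ref{thm:scattered} by a straightforward union bound over all admissible distance vectors, using Lemma~\ref{lem:repetition-free-onerepeated} as the key input. First I would observe that a $k$-permutation matrix has an $r$-repetition exactly when there is \emph{some} vector $(d,d')$ with $d,d' \in \{-k+1,\dots,k-1\}$ that is $r$-repeated in it. There are at most $(2k-1)^2 < 4k^2$ such vectors, but this is wasteful; I would instead note that a distance vector $(d,d')$ arising from a pair of $1$-entries must have $d \ne 0$ and $d' \ne 0$ (since a permutation matrix has exactly one $1$-entry per row and per column), so in fact only $(2k-2)^2$ vectors can possibly be $r$-repeated, and moreover $(d,d')$ is $r$-repeated if and only if $(-d,-d')$ is, which halves the count again to at most $2(k-1)^2 \le 2k^2$ relevant vectors (here one has to be a little careful since $r \ge 1$, but for $r \ge 3$, which is the regime of interest, the symmetry argument is clean; for the stated bound $2k^2$ even the crude count $2k^2$ suffices after using the sign symmetry). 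Applying Lemma~\ref{lem:repetition-free-onerepeated}, each such vector is $r$-repeated in at most $k!/r!$ permutation matrices, so by the union bound the number of $k$-permutation matrices with an $r$-repetition is at most $2k^2 \cdot k!/r!$, which is the first claim.

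For the ``consequently'' part, I would specialize to $r = \lceil 4\log_2 k / \log_2\log_2 k \rceil$, which is the threshold in the definition of \emph{scattered}: a matrix fails to be scattered precisely when it has an $r$-repetition for some $r \ge 4\log_2 k/\log_2\log_2 k$, equivalently (since having an $r$-repetition is monotone — an $r$-repetition implies an $r'$-repetition for all $r' \le r$) when it has an $r_0$-repetition for $r_0 = \lceil 4\log_2 k/\log_2\log_2 k\rceil$. So the number of non-scattered $k$-permutation matrices is at most $2k^2 \cdot k!/r_0!$, and it remains to check that $2k^2/r_0! = o(1)$. Using the bound $r_0! \ge (r_0/e)^{r_0}$ and $r_0 \ge 4\log_2 k/\log_2\log_2 k$, I would estimate $\log_2(r_0!) \ge r_0(\log_2 r_0 - \log_2 e)$. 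Since $\log_2 r_0 = \log_2\log_2 k - \log_2\log_2\log_2 k + O(1) = (1+o(1))\log_2\log_2 k$, we get $\log_2(r_0!) \ge (4+o(1))\log_2 k \cdot \frac{(1+o(1))\log_2\log_2 k}{\log_2\log_2 k} = (4+o(1))\log_2 k$, hence $r_0! \ge k^{4+o(1)} = \omega(k^2)$, so $2k^2/r_0! \to 0$. This gives that the fraction of non-scattered matrices tends to $0$, i.e., their number is $o(k!)$.

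The only mildly delicate point is the last asymptotic estimate: I need the constant $4$ in the threshold to genuinely beat the $2$ coming from the $k^2$ factor, with room to spare, after accounting for the $e$ in Stirling and the second-order $\log\log\log$ term in $\log r_0$. The computation above shows $\log_2(r_0!)$ is asymptotically $4\log_2 k$, comfortably larger than $2\log_2 k + \log_2 2$, so there is no issue — in fact any constant strictly greater than $2$ in the threshold would work for the $o(k!)$ conclusion, and the choice of $4$ is presumably made so that the resulting bound plugs cleanly into Theorem~\ref{thm:upper-bound-repetition-free} to yield Corollary~\ref{cor:random-permutation}. I expect no real obstacle here; the proof is a union bound plus a Stirling estimate, and the main content has already been done in Lemma~\ref{lem:repetition-free-onerepeated}.
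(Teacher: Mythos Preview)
Your proposal is correct and follows essentially the same approach as the paper: a union bound over the $2(k-1)^2 \le 2k^2$ distance vectors (after exploiting the $(d,d') \leftrightarrow (-d,-d')$ symmetry) combined with Lemma~\ref{lem:repetition-free-onerepeated}, followed by a Stirling-type estimate at $r_0 = \lceil 4\log_2 k/\log_2\log_2 k\rceil$ to conclude $2k^2/r_0! = o(1)$. Your parenthetical worry about the symmetry argument for small $r$ is unnecessary --- since $d,d' \neq 0$ the pairing $(d,d') \mapsto (-d,-d')$ is fixed-point-free, so the halving is clean for all $r \ge 1$.
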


\begin{proof}
The distance vector of a pair of $1$-entries in a $k$-permutation matrix can attain
$(2k-2)^2$ different values of the form $(d,d')$, where 
$|d|, |d'| \in \{1, 2, \dots, k-1\}$.
For every $(d,d')$ the vector $(d,d')$ occurs in a matrix the same number of 
times as the vector $(-d,-d')$.
Therefore to get an upper on the number of $k$-permutation matrices 
with an $r$-repetition, it is enough to consider only the $2(k-1)^2$
values of the distance vector where $d, |d'| \in \{1, 2, \dots, k-1\}$.
The first part of Theorem~\ref{thm:scattered} now follows from 
Lemma~\ref{lem:repetition-free-onerepeated}.

The second part of Theorem~\ref{thm:scattered} follows by using the formula to bound the number
of permutation matrices with a $\lceil 4 \log k/ \log\log k\rceil$-repetition.
We have
\begin{align*}
2 k^2 \frac{k!}{\left(4\log k / \log\log k\right)!} 
& \le 2^{1+2\log k} \cdot k! \left(\frac{e \log\log k}{4\log k}\right)^{4\log k/\log\log k} \\
& \le k! \cdot 2^{1+2\log k}\cdot 2^{(\log\log\log k -\log\log k)\cdot 4\log k/\log\log k}  \\
& \le k! \cdot 2^{1-2\log k+o(\log k)} = o(k!).
\qedhere
\end{align*}
\end{proof}


\section{Trade-off between size and density of \texorpdfstring{$P$}{P}-avoiding matrices}
\label{section_filozofie}

In this section we prove Theorem~\ref{thm:framework}.

The \emph{density} of a row of a matrix is the ratio of the number of $1$-entries 
in this row to the number of columns. For a permutation matrix $P$, let $f_P(z,y)$ be the maximum possible number of rows of a binary $P$-avoiding matrix 
with $z$ columns and at least $y$ $1$-entries in every row.
That is, we are interested in matrices where the density of every row is at least 
$\hust = y/z$.

Marcus and Tardos~\cite{MarcusTardos} bounded $f_P(k^2, k)$ for every $k$-permutation matrix $P$, to show that $c_P$ is finite. 
Fox used an upper bound on $f_P(2^{2k}, 2^{k-1})$ to show that $c_P \le 2^{8k}$ for every
$k$-permutation matrix $P$.
In general, Fox's generalization of the Marcus--Tardos recursion~\cite[Lemma 12]{Fox13} requires an upper bound
on $f_P(z,y)$ where $(y-1)^2 < z$, in order to prove a linear upper bound on $\ex_P(n)$.
The next lemma allows us to deduce upper bounds on $c_P$ from bounds on $f_P(z,y)$ 
where $y$ is close to $z$.

By $P^T$ we denote the transpose of $P$. The following proposition is the heart of Theorem~\ref{thm:framework}.

\begin{proposition}
\label{tvrzeni_huste}
Let $P$ be a permutation matrix, $u,h \in \mathbb{N}$ and $\hust \in (0,1)$.
Suppose that for every $z \ge u$, we have 
\begin{enumerate}
\item
\label{cond:framework1}
$f_P(z, \hust z) < h$ \ \ and  
\item
\label{cond:framework2}
$f_{P^T}(z, \hust z) < h.$ 
\end{enumerate}
Then
\[
c_P \le 2 u^3 h^{\lceil -\log u / \log \hust \rceil}.
\]
\end{proposition}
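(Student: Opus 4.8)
The plan is to prove Proposition~\ref{tvrzeni_huste} by a Marcus--Tardos-style recursion on $n$, cutting an $n \times n$ matrix $A$ into blocks and using conditions~\eqref{cond:framework1} and~\eqref{cond:framework2} to control how many blocks can be ``dense'' in a way that does not immediately force a copy of $P$. Concretely, set $\ell := \lceil -\log u / \log \hust \rceil$ (note $\hust \in (0,1)$ so $-\log\hust > 0$ and $\ell \ge 1$), and note that $\hust^\ell \le 1/u$, i.e.\ $u \hust^\ell \le 1$; this is the exponent that will appear in the final bound $c_P \le 2u^3 h^\ell$. First I would partition the rows of $A$ into $\lceil n/u\rceil$ consecutive intervals of $u$ rows (padding the last one) and likewise the columns, forming a $\lceil n/u\rceil \times \lceil n/u\rceil$ grid of $u\times u$ blocks. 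Call a block \emph{wide} if the set of its nonempty columns, when looked at inside the reduced (contracted) matrix, is large, and \emph{tall} analogously for rows; the standard dichotomy is that every block is wide or tall (or has few nonempty lines, hence few $1$-entries).

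The key counting step uses the hypotheses as follows. Contract the blocks of $A$ to get a $\lceil n/u\rceil \times \lceil n/u\rceil$ $0/1$-matrix $B$, which must avoid $P$ as an interval minor, hence avoids $P$ as a matrix (since $P$ is a permutation matrix); by the inductive bound $\ex_P$ is linear, $B$ has $O(n/u)$ $1$-entries, so the number of nonempty blocks is $O(n/u)$. Now among the nonempty blocks in a fixed strip of $u$ consecutive rows, if too many of them are ``wide'' — meaning their column-supports inside the contracted picture are too spread out — then restricting $A$ to those $u$ rows and the relevant columns yields a $u$-row, many-column, $P$-avoiding submatrix with every row dense, contradicting $f_P(z, \hust z) < h$ once $z \ge u$; this caps the number of wide blocks per row strip by roughly $h$, and symmetrically (using $f_{P^T}$) the number of tall blocks per column strip by roughly $h$. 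The recursion then reads $\ex_P(n) \le (\text{blocks}) \cdot (\text{contribution per block})$, where each wide/tall block is handled recursively at scale $u$ and the ``sparse'' blocks contribute $O(u)$ entries each; unwinding the recursion to depth $\ell$ (after $\ell$ halvings of scale by a factor $\hust$, the effective density drops below $1/u$, so the bottom-level blocks are forced to be $o(u^2)$-dense and the recursion terminates) gives the factor $h^\ell$, and the $u^3$ absorbs the per-block $O(u)$ terms times the $O(n/u)$ block count times a final $u$ from the base case; the leading $2$ absorbs rounding in the $\lceil \cdot \rceil$'s and the geometric series.

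The main obstacle I anticipate is making the ``wide/tall'' dichotomy and the per-strip counting precise enough that the hypotheses $f_P(z,\hust z) < h$ and $f_{P^T}(z,\hust z)<h$ apply \emph{for the right range} $z \ge u$: one has to be careful that after contracting, the submatrix whose rows we are bounding genuinely has $\ge \hust z$ ones per row for the appropriate $z$ (which may be much larger than $u$, namely the current number of relevant columns), and that the ``density threshold'' $\hust$ is exactly the one preserved by one step of the recursion. A secondary technical point is the termination argument: I would track, level by level, a density parameter starting at $1$ (a full block) and decreasing by a factor $\hust$ at each recursive descent, and stop once it falls to $\le 1/u$, which happens after exactly $\ell$ levels; at that point a block of side $u$ with density $\le 1/u$ has at most $u$ ones, giving the base case. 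Assembling these, the recursion multiplies by at most $h$ at each of the $\ell$ levels and by $O(u)$ for the linear-in-$n$ bookkeeping, which yields $\ex_P(n) \le 2u^3 h^\ell \cdot n$ and hence $c_P = \lim \ex_P(n)/n \le 2u^3 h^{\lceil -\log u/\log\hust\rceil}$, as claimed. Finally, Theorem~\ref{thm:framework} follows from Proposition~\ref{tvrzeni_huste} by observing that the single hypothesis $\ex_P(u) < \hust u^2$ implies both $f_P(z,\hust z) < h$ and $f_{P^T}(z,\hust z) < h$ for a suitable $h$ (roughly $h = u$): a $P$-avoiding matrix with $u$ columns, many rows, and every row of density $\ge \hust$ would, after keeping any $u$ of its rows, contradict $\ex_P(u) < \hust u^2$, so it can have at most $u-1 < u$ rows, and symmetrically for $P^T$ (using that $\ex_{P^T}(u) = \ex_P(u)$ by transposition).
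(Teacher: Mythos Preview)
Your outline conflates two logically separate inductions, and as written the recursion cannot terminate at depth~$\ell$. In the paper's proof the factor $h^{\ell}$ does \emph{not} come from a Marcus--Tardos recursion on~$n$. One first proves a separate lemma: for every $i\ge 1$, $f_P(u^2, q_i u^2) < h^i$, where $q_i = \max\{1/u, q^i\}$. This induction is on~$i$, with the number of columns held fixed at~$u^2$: given an $h^i \times u^2$ $P$-avoiding matrix with $\ge q_i u^2$ ones in every row, partition the rows into $h^{i-1}$ consecutive groups of~$h$. Either some group has at most $\lfloor q_{i-1}u^2\rfloor$ nonempty columns, and after deleting empty columns condition~\ref{cond:framework1} applies directly with $z = \lfloor q_{i-1}u^2\rfloor \ge u$ (using $q_i/q_{i-1}\ge q$); or every group has more than $q_{i-1}u^2$ nonempty columns, and contracting the groups yields an $h^{i-1}\times u^2$ matrix to which the inductive hypothesis at level $i-1$ applies. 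Taking $i=\ell$ gives $f_P(u^2,u)<h^{\ell}$ and, by condition~\ref{cond:framework2}, $g_P(u^2,u)<h^{\ell}$. Only then does one invoke Fox's recursion a single time, with parameters $t=u^2$ and $s=u$ (not $t=u$), and solve it via superadditivity rather than induction on~$n$: from $u^2\,\ex_P(n)\le \ex_P(u^2 n)\le (u-1)^2\ex_P(n)+2u^4 h^{\ell} n$ one gets $(2u-1)\ex_P(n)\le 2u^4 h^{\ell} n$, hence $c_P\le 2u^3 h^{\ell}$.

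By contrast, you cut $A$ into $u\times u$ blocks and speak of ``unwinding the recursion to depth~$\ell$'' with a density parameter dropping by~$q$ at each descent; but a Marcus--Tardos recursion on~$n$ with fixed block side~$u$ has depth $\log_u n$, and there is no mechanism in your sketch by which a wide $u\times u$ block is ``handled recursively at scale~$u$'' so as to decrease any density threshold. The per-strip cap of ``roughly~$h$'' wide blocks you describe yields a single factor~$h$, not $h^{\ell}$. The missing idea is precisely the row-contraction induction above, carried out on matrices with a \emph{fixed} number $u^2$ of columns, in which each round of contracting $h$ rows lowers the per-row density threshold by a factor~$q$; this is also why the final block side must be $u^2$ rather than~$u$, so that the sparse-block threshold~$s=u$ satisfies $(s-1)^2<t$ and the superadditivity comparison closes.
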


We break the proof of Proposition~\ref{tvrzeni_huste} into a sequence of statements.

Notice that if $\hust \le 1/u$, then condition~\ref{cond:framework1} with $z=u$ implies
that every $h \times u$ matrix with one $1$-entry in every row contains $P$.
This is satisfied only when $P$ is the $1$-permutation matrix.
Then $c_P = 0$ and the conclusion of the proposition is valid.
We therefore further assume that $\hust > 1/u$.

We define a sequence $\hust_i$ of densities of 
$1$-entries as follows. For every $i \ge 1$, let
\[
\hust_i = \max\{1/u, \hust^i\}.
\]
Since $\hust > 1/u$, we have $\hust_1 = \hust$. 
Since $\hust < 1$, 
there is some $i_0 > 1$ such that $\hust_i = \hust^i$ whenever $i < i_0$ 
and $\hust_i = 1/u$ for $i \ge i_0$.
We thus have 
\begin{equation}
\label{eq:hust-frac}
\frac{\hust_i}{\hust_{i-1}} \ge \hust \qquad \text{for every } i \ge 2.
\end{equation}

\begin{lemma}
\label{lemma_sparsifying}
Under the conditions of Proposition~\ref{tvrzeni_huste}, for every $i\ge 1$, we have
\[
f_{P}\left(u^2, \hust_i u^2 \right) < h^i.
\]
\end{lemma}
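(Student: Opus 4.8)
The plan is to prove Lemma~\ref{lemma_sparsifying} by induction on $i$, where the base case $i=1$ follows immediately from condition~\ref{cond:framework1} of Proposition~\ref{tvrzeni_huste} applied with $z=u^2 \ge u$, since $\hust_1 = \hust$. For the inductive step, assume the bound holds for $i-1$, and suppose for contradiction that there is a $P$-avoiding matrix $M$ with $u^2$ columns, at least $h^i$ rows, and density of every row at least $\hust_i$. The idea is to partition the rows of $M$ into consecutive blocks and contract, so that the resulting contracted matrix has fewer rows but larger row density, landing us in the situation covered by the inductive hypothesis for $i-1$, which then forces $P$ to be an interval minor (equivalently, contained) — contradicting $P$-avoidance of $M$.

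Concretely, I would group the $h^i$ rows of $M$ into $h^{i-1}$ consecutive intervals, each consisting of $h$ rows (discarding a few extra rows if $h \nmid h^i$, though here it divides evenly), and form the matrix $M'$ with one row per interval by the row-contraction operation defined in the excerpt: the entry of $M'$ in an interval-row and column $j$ is $1$ iff some row of that interval has a $1$ in column $j$. The matrix $M'$ has $h^{i-1}$ rows and $u^2$ columns. I would then argue that $M'$ still avoids $P$: this needs the observation that if $P$ (a permutation matrix) is contained in $M'$, then since the $1$-entries of $M'$ come from $1$-entries of $M$ appearing in distinct original rows within distinct intervals, one can lift a copy of $P$ in $M'$ to a copy of $P$ in $M$ — I would use the fact that $P$ is a permutation matrix so its $k$ ones lie in distinct columns, allowing us to choose, for each selected interval-row of $M'$, one genuine row of $M$ realizing the required $1$-entry. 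The main obstacle is making this lifting argument clean and precise: one must be careful that distinct interval-rows give distinct original rows (automatic, since intervals are disjoint) and that the column injection for $P$ is preserved; the standard way is that contraction by an interval decomposition of rows cannot create a permutation submatrix that was not already an interval minor of $M$, and $P$-avoidance of a permutation matrix is equivalent to $P$ not being an interval minor.

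Finally I would estimate the row density of $M'$. Each interval of $M$ has $h$ rows, each with at least $\hust_i u^2$ ones among $u^2$ columns; the contracted row has a $1$ in every column hit by any of these $h$ rows, so the number of ones in a contracted row is at least $\hust_i u^2$ as well, but in fact we want it to be at least $\hust_{i-1} u^2$. Since $\hust_i \le \hust_{i-1}$, the bound $\hust_i u^2$ ones per contracted row already gives density at least — wait, that is the wrong direction, so instead the real gain comes from a different grouping: I would group rows into $h^{i-1}$ intervals only if that raises density enough, but the robust route is to note $\hust_i / \hust_{i-1} \ge \hust$ from~\eqref{eq:hust-frac}, group the rows into intervals of size roughly $h$, and observe that by a union/pigeonhole bound the contracted row has at least $\hust \cdot h \cdot \hust_{i-1} u^2 / (\text{something})$ — more cleanly, a contracted row's ones are the union of $h$ sets each of size $\ge \hust_i u^2$, hence size $\ge \hust_i u^2$; since we only need $\ge \hust_{i-1} u^2$ and this fails, the correct argument uses that \emph{some} row in each interval is "typical" only after a sparsification step. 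The cleanest formulation, which I expect the authors use, is: take $M$ with $\ge h^i$ rows of density $\ge \hust_i$; by averaging there is a contiguous block where contracting $h$-row groups yields density $\ge \hust_{i-1}$ (using $\hust_i \cdot$ (number of groups covering a column) and~\eqref{eq:hust-frac}); this block, contracted, has $< h^{i-1}$ rows by the inductive hypothesis applied to $M'$ with density $\hust_{i-1}$, contradiction. I would write the density bookkeeping carefully, as that is where the constants must line up, but it is routine once the contraction-preserves-avoidance step is in place.
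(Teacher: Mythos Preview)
Your inductive setup and the contraction idea are exactly right, and your observation that $P$-avoidance passes to the row-contracted matrix (since for a permutation matrix containment coincides with being an interval minor) is correct. The gap is in the density bookkeeping for the contracted matrix, where you yourself notice that the naive union bound only gives $\ge \hust_i u^2$ ones per contracted row, which is the wrong direction since $\hust_{i-1} \ge \hust_i$. Your proposed fixes (averaging over contiguous blocks, a vague ``sparsification step'') do not work: there is no averaging argument that forces every contracted row to have density $\ge \hust_{i-1}$, because it is genuinely possible that all $h$ rows in some block have their $1$-entries supported on the same $\lfloor \hust_i u^2 \rfloor$ columns.

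The missing idea is a dichotomy, and this is exactly what the paper does. After splitting the $h^i$ rows into $h^{i-1}$ blocks of $h$ consecutive rows each, one distinguishes two cases. If \emph{some} block has at most $\lfloor \hust_{i-1} u^2 \rfloor$ nonempty columns, you do not contract at all: instead delete the empty columns of that block, obtaining an $h \times z$ matrix with $z = \lfloor \hust_{i-1} u^2 \rfloor \ge u$ and, by~\eqref{eq:hust-frac}, at least $\hust_i u^2 \ge \hust \cdot \hust_{i-1} u^2 \ge \hust z$ ones in every row; then condition~\ref{cond:framework1} of Proposition~\ref{tvrzeni_huste} applied with this $z$ produces $P$ directly inside that block. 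If instead \emph{every} block has more than $\hust_{i-1} u^2$ nonempty columns, then by definition each row of the contracted matrix $M'$ has at least $\hust_{i-1} u^2$ ones, and the inductive hypothesis for $i-1$ finishes. Without this case split the argument cannot be closed.
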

\begin{proof}
We proceed by induction on $i$. The case $i=1$ follows from condition~\ref{cond:framework1} of Proposition~\ref{tvrzeni_huste}.

Given $i\ge 2$, suppose for contradiction that $A_i$ is an $h^i \times u^2$ binary $P$-avoiding matrix 
with at least $\hust_i u^2 $ $1$-entries in every row.
We split the matrix $A_i$ into $h^{i-1}$ intervals of consecutive $h$-tuples of rows. 
For every $j \in \{1,2,\dots, h^{i-1}\}$, let $A_{i,j}$ be the matrix formed by 
the $j$th interval of rows.

First, assume that for some $j$, the matrix $A_{i,j}$ has at most
$\lfloor\hust_{i-1} u^2\rfloor$ columns with at least one $1$-entry.
Then we consider an $h \times \lfloor\hust_{i-1} u^2\rfloor$ matrix $A'_{i,j}$ created 
from  $A_{i,j}$ by removing some columns with $0$-entries only.
Since $A'_{i,j}$ contains all $1$-entries of $A_{i,j}$, it has at least $\hust_i u^2$ $1$-entries in every row, 
which is at least $\hust \hust_{i-1} u^2$ by \eqref{eq:hust-frac}. 
Condition~\ref{cond:framework1} of Proposition~\ref{tvrzeni_huste} with $z=\lfloor\hust_{i-1} u^2\rfloor$ implies that $A'_{i,j}$ contains $P$.

Now assume that for every $j$, the matrix $A_{i,j}$ has at least $\hust_{i-1} u^2$ columns 
with at least one $1$-entry.
Let $B_i$ be the matrix formed from $A_i$ by contracting the intervals of rows 
forming the matrices $A_{i,j}$. 
Then $B_i$ is an $h^{i-1} \times u^2$ binary matrix with at least $\hust_{i-1} u^2$ $1$-entries in every row.
By the induction hypothesis, $B_i$ contains $P$ and consequently $A_i$ contains $P$.
\end{proof}

\begin{corollary}
\label{cor_tallMatrix}
Under the conditions of Proposition~\ref{tvrzeni_huste}, we have
\[
f_{P}(u^2, u) \le h^{\lceil -\log u / \log \hust \rceil}.
\]
\end{corollary}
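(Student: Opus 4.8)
The plan is to instantiate Lemma~\ref{lemma_sparsifying} at the appropriate value of $i$. First observe that $f_P(z,y)$ is non-increasing in $y$: any matrix with at least $y$ ones in every row also has at least $y'$ ones in every row whenever $y'\le y$. Hence it suffices to find the smallest index $i\ge 1$ for which $\hust_i u^2 \le u$, equivalently $\hust_i \le 1/u$, and then apply the lemma at that index, since $f_P(u^2,u) \le f_P(u^2,\hust_i u^2) < h^i$ (with the middle term even equal to $f_P(u^2,u)$ when $\hust_i u^2 = u$).

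Next I would identify that index explicitly. By definition $\hust_i = \max\{1/u,\hust^i\}$, so $\hust_i \le 1/u$ holds exactly when $\hust^i \le 1/u$. Taking logarithms and dividing by $\log\hust$, which is negative because $\hust\in(1/u,1)\subseteq(0,1)$, this is equivalent to $i \ge -\log u/\log\hust$. Thus the least such integer is $i^\ast := \lceil -\log u/\log\hust\rceil$. Since $\hust > 1/u$ we have $1/\hust < u$, hence $-\log u/\log\hust > 1$ and $i^\ast \ge 1$, so Lemma~\ref{lemma_sparsifying} indeed applies at $i = i^\ast$. For this index $\hust_{i^\ast} = 1/u$, so $\hust_{i^\ast} u^2 = u$, and the lemma yields $f_P(u^2,u) = f_P\!\left(u^2,\hust_{i^\ast}u^2\right) < h^{i^\ast}$. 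Since both sides are integers, $f_P(u^2,u) \le h^{i^\ast} = h^{\lceil -\log u/\log\hust\rceil}$, as claimed.

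There is essentially no real obstacle in this argument; the only points requiring care are reversing the inequality when dividing by the negative quantity $\log\hust$, and verifying that the resulting index $i^\ast$ is at least $1$ so that Lemma~\ref{lemma_sparsifying} can legitimately be invoked.
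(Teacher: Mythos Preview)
Your argument is correct and follows essentially the same route as the paper: instantiate Lemma~\ref{lemma_sparsifying} at $i=\lceil -\log u/\log\hust\rceil$, check that $\hust^i\le 1/u$ so $\hust_i=1/u$, and read off $f_P(u^2,u)<h^i$. Your additional remarks (monotonicity of $f_P$ in $y$, verifying $i^\ast\ge 1$, and the integer observation) are fine but not strictly needed, since $\hust_{i^\ast}u^2=u$ exactly and strict inequality already implies the stated non-strict bound.
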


\begin{proof}
We use Lemma~\ref{lemma_sparsifying} with $i = \lceil -\log u / \log \hust \rceil$.
Then we have 
\[
\hust^i = \hust^{\lceil-\log u / \log \hust\rceil} \le 1/u,
\]
and so $\hust_i = 1/u$.
By Lemma~\ref{lemma_sparsifying},  
\[
f_{P}(u^2, u) < h^i.
\qedhere
\]
\end{proof}

Let $g_P(z,y)$ be the maximum number of columns of a binary $P$-avoiding matrix 
with $z$ rows and at least $y$ $1$-entries in every column.
Since $g_P(z,y) = f_{P^T}(z,y)$ for every $z,y \in \mathbb{N}$, we have the following corollary.

\begin{corollary}
\label{cor_wideMatrix}
Under the conditions of Proposition~\ref{tvrzeni_huste}, we have
\[
g_{P}(u^2, u) \le h^{\lceil -\log u / \log \hust \rceil}.
\qedhere
\]
\qed
\end{corollary}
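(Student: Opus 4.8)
The final statement is Corollary~\ref{cor_wideMatrix}, and since $g_P(z,y)=f_{P^T}(z,y)$ holds by definition, the plan is essentially to apply the already-established Corollary~\ref{cor_tallMatrix} with the roles of $P$ and $P^T$ swapped. First I would observe that the hypotheses of Proposition~\ref{tvrzeni_huste} are symmetric in $P$ and $P^T$: condition~\ref{cond:framework1} for $P$ is condition~\ref{cond:framework2} for $P^T$ and vice versa, since $(P^T)^T=P$. Therefore, under the conditions of Proposition~\ref{tvrzeni_huste}, the matrix $P^T$ also satisfies those conditions (with itself and its transpose).

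Next I would apply Corollary~\ref{cor_tallMatrix} verbatim to $P^T$ in place of $P$, which yields
\[
f_{P^T}(u^2,u)\le h^{\lceil -\log u/\log \hust\rceil}.
\]
Finally, using the identity $g_P(u^2,u)=f_{P^T}(u^2,u)$ (a binary matrix with $z$ rows and at least $y$ ones per column is the transpose of one with $z$ columns and at least $y$ ones per row, and $A$ avoids $P$ iff $A^T$ avoids $P^T$), the desired bound $g_P(u^2,u)\le h^{\lceil -\log u/\log \hust\rceil}$ follows immediately.

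There is no real obstacle here; the corollary is a bookkeeping consequence of transposition symmetry, which is why the excerpt states it with a bare \qed. The only point requiring a word of care is making explicit that transposition is an involution preserving the avoidance relation in the appropriate dual sense, so that both the hypotheses transfer to $P^T$ and the conclusion translates back to a statement about columns. I would keep the proof to one or two sentences.

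\begin{proof}
The conditions of Proposition~\ref{tvrzeni_huste} are symmetric under exchanging $P$ with $P^T$, since $(P^T)^T=P$. Hence $P^T$ also satisfies them, and Corollary~\ref{cor_tallMatrix} applied to $P^T$ gives $f_{P^T}(u^2,u)\le h^{\lceil -\log u / \log \hust \rceil}$. Using $g_P(u^2,u)=f_{P^T}(u^2,u)$, the claim follows.
\end{proof}
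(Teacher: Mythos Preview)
Your proof is correct and matches the paper's approach exactly: the paper states the identity $g_P(z,y)=f_{P^T}(z,y)$ immediately before the corollary and then gives only a bare \qed, leaving implicit precisely the symmetry observation you spell out (that condition~\ref{cond:framework2} for $P$ is condition~\ref{cond:framework1} for $P^T$, so Corollary~\ref{cor_tallMatrix} applies to $P^T$). There is nothing to add.
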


\begin{proof}[Proof of Proposition~\ref{tvrzeni_huste}]
By Fox's generalized Marcus--Tardos recursion~\cite[Lemma 12]{Fox13}, 
for every permutation matrix $P$ and for all positive integers $n,s,t$ with 
$s \le t$, we have
\[
\ex_P(tn) \le \ex_P(s-1)\cdot \ex_P(n) + \ex_P(t)\cdot n \cdot(f_P(t,s) + g_P(t,s)).
\]
Marcus and Tardos~\cite{MarcusTardos} used the recursion with parameters $t=k^2$ and $s=k$. 
We choose the parameters $t=u^2$ and $s=u$.

Let 
\[
\hloubka = h^{\lceil -\log u / \log \hust \rceil}.
\]
By Corollaries~\ref{cor_tallMatrix}~and~\ref{cor_wideMatrix} and
by the trivial estimates $\ex_P(s-1) \le (s-1)^2$ and $\ex_P(t) \le t^2$,
we have
\begin{equation}
\label{eq:fox}
\ex_P(u^2 n) \le (u-1)^2 \cdot \ex_P(n) + u^4 n \cdot 2 \hloubka.
\end{equation}

Arratia~\cite{Arratia99} proved that $|S_P(n)|$ is supermultiplicative in $n$ 
for every fixed permutation matrix $P$. 
An analogous proof shows that for every permutation matrix $P$, 
the extremal function $\ex_P(n)$ is superadditive~\cite[Lemma 1(ii)]{PachTardos}; 
that is, for every $m, n \in \mathbb{N}$, we have
\[
\ex_P(m+n) \ge \ex_P(m) + \ex_P(n).
\]
Consequently, for every permutation matrix $P$ and $n, \alpha \in \mathbb{N}$, we have
\begin{equation}
\label{eq:superadditivity}
\ex_P(\alpha n) \ge \alpha \cdot \ex_P(n).
\end{equation}

By combining inequality~\eqref{eq:superadditivity} for $\alpha = u^2$ with inequality~\eqref{eq:fox}, we obtain
\begin{align*}
u^2 \cdot \ex_P(n) 
& \le \ex_P(u^2 n) 
\le (u-1)^2 \cdot \ex_P(n) + u^4 n \cdot 2 \hloubka \\
(2u-1) \cdot \ex_P(n) & \le u^4 n \cdot 2 \hloubka \\
\ex_P(n) & \le 2 u^3 h^{\lceil -\log u / \log \hust \rceil} n.
\qedhere 
\end{align*}
\end{proof}

\begin{proof}[Proof of Theorem~\ref{thm:framework}]
For every $z \ge u$, if a $u \times z$ matrix $A$ contains at least $qz$ $1$-entries in every
row, then $A$ contains at least $quz$ $1$-entries.
Thus, we can select $u$ columns having together at least $qu^2$ $1$-entries.
Consequently, the condition $\ex_P(u) < qu^2$ implies condition~\ref{cond:framework1} of Proposition~\ref{tvrzeni_huste} with $h=u$.
The validity of condition~\ref{cond:framework2} of Proposition~\ref{tvrzeni_huste} follows from the fact that $\ex_P(u)=\ex_{P^T}(u)$.
\end{proof}


\section{Repetition-free permutation matrices}
\label{section-ref-free-ub}

In this section we prove Theorem~\ref{thm:upper-bound-repetition-free}.
We first show that for given $k$ and $r$ and an $r$-repetition-free permutation matrix $P$,
every $3k \times 3k$ matrix with a sufficiently small number of $0$-entries in every row 
and every column contains $P$ (see Lemma~\ref{lem:repetition-free-in-3k}).
We then show that every $4k \times 4k$ matrix with a sufficiently small total number of 
$0$-entries contains $P$ (see Theorem~\ref{thm:fixed-size}). 
Theorem~\ref{thm:upper-bound-repetition-free} then follows by Theorem~\ref{thm:framework}.

We analyze a straightforward greedy algorithm for finding an occurrence of a $k$-permutation
matrix $P$ on a given $k$-tuple of rows of a binary matrix $B$.
In this setting, every $1$-entry of $P$ has a prescribed row of $B$ in which it is 
to be mapped. 
For every $j$, let $\row_{j}$ be the row of $B$ in which 
the $1$-entry from the $j$th column of $P$ is to be mapped.
Figure~\ref{fig:algo} shows an example of the execution of the algorithm.

\begin{figure}
\begin{center}
{
  \ifpdf\includegraphics{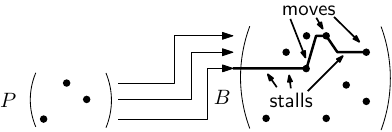}\fi
}
\end{center}
\caption{\label{fig:algo} An example of the execution of the algorithm for finding an occurrence of 
a $k$-permutation matrix $P$ on a fixed $k$-tuple of rows of a binary matrix $B$.}
\end{figure}

In every step of the algorithm, one entry of $B$ is inspected.
The entry inspected in the $i$th step of the algorithm is always in the $i$th
column of $B$.
The entry of $B$ inspected in the first step lies in the row $r_1$.
In every step, the algorithm does the following.
If the inspected entry is $0$, the algorithm stays in the same
row for the next step, and we say that it \emph{stalls}.
If the inspected entry is $1$ and the current row is $\row_{j}$
for some $j \le k-1$, the algorithm goes to the row $\row_{j+1}$
for the next step, and we say that the algorithm \emph{moves}.
If the inspected entry is $1$ and the current row is $\row_{k}$,
then an occurrence of $P$ has been found and the algorithm terminates.

We note that if the algorithm fails to find an occurrence of $P$, then the given $k$-tuple 
of rows does not contain an occurrence of $P$. This fact is, however, not used in the proof.

Let $B$ be a $3k\times 3k$ matrix.
We simultaneously run $2k+1$ instances of the algorithm, 
one for every $k$-tuple of consecutive rows of $B$. 
If an instance of the algorithm does not find an occurrence of $P$, then 
at most $k-1$ of its steps are moves. 
Hence, if at least one of the instances makes at least $k$ moves, $B$ contains $P$.

Given integers $k \ge 9$ and $r \ge 3$, let
\begin{align*}
w &= \left\lceil \frac{35}{24} \left(\frac{k}{r}\right)^{1/3}\right\rceil \qquad \text{ and } \\
v &= \frac13 \left(\frac{k}{r}\right)^{1/3}.
\end{align*}
For every $k \ge 9$ and $r \ge 3$, we have
\begin{equation}
\label{eq:w-vs-k-third}
w \le \frac{k}{3}.
\end{equation}
Indeed, if $k \le 11$, then $w \le 3 \le k/3$. 
Otherwise, $w < \lceil 1.1 k^{1/3}\rceil < 1.1 k^{1/3} + 1 < k/3$.

The following claim is the main part of the proof.

\begin{lemma}
\label{lem:moves-now-or-later}
Let $k \ge 9$ and let $B$ be a $3k\times 3k$ binary matrix with
at most $v$ $0$-entries in every row and in every 
column.
Let $r \ge 3$ and let $P$ be an $r$-repetition-free $k$-permutation matrix.
For every $j \in \{1, 2, \dots, 3k-w\}$, either at least $3k/4$ instances of the algorithm
make a move in the $j$th step or 
the sum of the numbers of moves made by the instances in steps $j,\allowbreak 
j+1, \dots,\allowbreak j+w-1$ is at least $3kw/4$.
\end{lemma}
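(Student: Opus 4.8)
The plan is to argue by contradiction: fix a step $j$, assume that fewer than $3k/4$ instances move in step $j$ (so at least $k/4$ instances stall in step $j$), and show that this forces many moves in the next $w-1$ steps. The key observation is that an instance stalls in step $i$ precisely when the entry of $B$ it inspects in column $i$ is a $0$-entry. Since each column of $B$ has at most $v$ zeros, at most $v$ instances can stall in any single step — wait, that is not quite right, because different instances occupy different rows. Let me instead think about it the other way: in column $i$, there are at most $v$ zero-entries, hence at most $v$ instances can possibly stall in step $i$ (an instance stalls only if it is sitting in one of the $\le v$ rows that have a $0$ in column $i$). So in fact \emph{at most $v$} instances stall per step. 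Since $v = \frac13 (k/r)^{1/3} \le k/12$ for $k\ge 9$, $r\ge 3$, we get that at least $3k - v \ge 3k - k/12 > 3k/4$ instances move in every step, which would make the first alternative of the lemma always hold. That seems too strong, so the intended reading must be more subtle — presumably an instance that already found $P$ (terminated) is no longer "making a move", and terminated instances accumulate. So the real content is about instances that have \emph{not yet terminated but still fail to move}, and one must rule out a scenario where a large fraction of instances are stuck at rows they repeatedly revisit.

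Reconsidering: the honest count is that at step $i$ at most $v$ of the surviving instances stall, but an instance can also be "not moving" because it has terminated (found $P$) — and once it terminates we should think of the claim as already giving us what we want for that tuple of rows. So the dangerous case for the first alternative failing is that at least $3k/4$ instances are \emph{alive and stalling} in step $j$ — but that is impossible since only $\le v$ alive instances stall. Hence the first alternative can only fail if many instances have \emph{terminated} before step $j$; but a terminated instance witnesses an occurrence of $P$, so we would already be done. This suggests the lemma statement is really used in a regime where we track a potential function, and the honest proof is: if the first alternative fails at step $j$, then (since $\le v$ alive instances stall) at least $3k/4 - v \ge$ (something) instances have terminated by step $j$, giving $P$ directly — so actually we can assume all $2k+1$ instances are alive and the first alternative holds automatically. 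The only way I can reconcile the stated disjunction is that the "move" count must be compared against a different baseline, so the first step is to carefully re-read the algorithm's bookkeeping and identify exactly what "makes a move in the $j$th step" is being contrasted with; the second alternative with the window of length $w$ is the fallback when a moderate but not overwhelming fraction stalls at step $j$, and one amortizes the deficit over the window using that each instance can stall at most $v$ times total, hence over $w$ steps contributes at most $\min(w, \text{stalls})$ stalls, leaving $\ge w - v$ moves on average.

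Concretely, here is the argument I would write once the bookkeeping is pinned down. For a fixed window $[j, j+w-1]$ of $w$ consecutive steps, and a fixed alive instance $\mathcal{A}$, the number of steps in the window in which $\mathcal{A}$ stalls is at most the total number of stalls of $\mathcal{A}$, which is bounded by the number of $0$-entries encountered along its trajectory. Here the $r$-repetition-freeness of $P$ enters: it controls how the trajectory of a single instance, or the comparison between trajectories of two different instances (shifted by a constant vector), can re-encounter the same rows, which in turn bounds the total stall count of an instance by roughly $v \cdot w$ or better across the whole run restricted to the window. Summing over the $\ge 3k - (\text{terminated})$ alive instances and using that each column of $B$ in the window has $\le v$ zeros, the total number of stalls over all instances in the window is at most $w \cdot v \cdot (2k+1)$ divided appropriately, so the total number of moves is at least $(2k+1)w - w v (2k+1) \ge 3kw/4$ after plugging $v \le k/12$ and $2k+1 \ge 3k/... $ — the constants are chosen exactly so that $3k/4$ comes out.

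The main obstacle I anticipate is the role of the $r$-repetition-freeness, which is conspicuously \emph{not} needed for the crude per-column counting argument above, so it must be doing something I am not yet seeing — most likely it is needed to show that a \emph{single} instance cannot have too many stalls \emph{concentrated} in one window by repeatedly cycling through a small set of rows with many $0$-entries, i.e. it prevents the $0$-entries of $B$ (which, recall, $B$ has few of, $\le v$ per row and column) from being "reused" by the algorithm because reuse would correspond to a repeated distance vector in $P$. So the crux of the write-up will be: formalize the correspondence between "an instance stalls at the same pair of (row, column-offset) pattern twice" and "$P$ has an $r$-repetition", and then conclude that each instance has at most, say, $(r-1)$-bounded stall multiplicity, which combined with the $\le v$ zeros per column and per row gives the bound $3kw/4$ with exactly the chosen constants $35/24$ in $w$ and $1/3$ in $v$. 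I would first isolate this stall-multiplicity lemma, prove it from $r$-repetition-freeness, and only then do the (routine) double-counting over the window.
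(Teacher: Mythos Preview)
Your proposal rests on a false structural assumption about the algorithm. You claim that at most $v$ instances can stall in any given step because an instance stalls only if it sits in one of the $\le v$ rows with a $0$ in that column; this presupposes that distinct instances occupy distinct rows of $B$ at any given step, which is \emph{not true}. The $2k+1$ instances differ only by a row-offset: the instance with offset $m$ inspects, at step $j$, row $m + c_s$ of $B$, where $c_s$ is the row of the $1$-entry in the $s$th column of $P$ and $s$ is that instance's current progress. Two instances with offsets $m_1\neq m_2$ and progresses $s_1\neq s_2$ can perfectly well satisfy $m_1+c_{s_1}=m_2+c_{s_2}$, and indeed far more than $v$ instances can pile up on a single $0$-entry $B_{i,j}$. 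So the first alternative of the lemma can genuinely fail with all instances alive; your attempt to blame terminated instances is a red herring.

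Correspondingly, the role of $r$-repetition-freeness is not the one you propose. It does \emph{not} bound the stall count of a single instance, and the relevant correspondence is not ``an instance stalls at the same (row, column-offset) pattern twice''. Rather, it controls how fast a bunch of instances that were stalled together at some $B_{i,j}$ can subsequently be peeled away. All instances stalled at $B_{i,j}$ resume moving simultaneously at some step $j'\le j+\lfloor v\rfloor$ (row $i$ has $\le v$ zeros), and they then advance in lockstep. If two of them next stall together at $B_{i',j'+l}$, then each of them witnesses an occurrence of the distance vector $(i'-i,l)$ between two $1$-entries of $P$, and these occurrences are \emph{distinct} because the two instances had different progresses when they were at $B_{i,j}$. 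Hence at most $r-1$ instances from the bunch can re-stall on any single $0$-entry, so at most $vr$ are lost from the bunch per step. Summing over the $\le v$ bunches in column $j$ and over the window, at least $\tfrac54 k - v\cdot(w-v)vr \ge \tfrac98 k$ instances make $\ge w-v$ moves in the window; the constants $\tfrac{35}{24}$ in $w$ and $\tfrac13$ in $v$ are chosen precisely so that $\tfrac98 k(w-v) \ge \tfrac34 kw$. This inter-instance comparison is the missing idea in your plan.
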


\begin{proof}
If an instance of the algorithm stalls (moves) after inspecting $B_{i,j}$, then we
say that the instance \emph{stalls (moves) on $B_{i,j}$}.

Assume that at most $3k/4$ of the instances make a move in the $j$th step.
Consider the instances stalled on some $B_{i,j}=0$.
Since the $i$th row of $B$ contains at most $v$ $0$-entries, all the 
instances stalled on $B_{i,j}$ will move simultaneously in $j'$th step for some 
$j' \in \{j+1, j+2, \dots, j+\lfloor v\rfloor\}$.

For every $l \ge 1$, let $M_l$ be the set of instances that stall on $B_{i,j}$
and make a move in each of the steps $j', j'+1, \dots, j'+l-1$.
Thus $M_1$ is the set of all instances stalled on $B_{i,j}$.
For every $l \ge 1$, the set $M_{l} \setminus M_{l+1}$ is the set of instances
from $M_l$ that are stalled on a $0$-entry in the $(j'+l)$th column of $B$.

We now use the fact that $P$ is $r$-repetition-free to bound the size of 
$M_{l} \setminus M_{l+1}$.
By the selection of the $k$-tuples of rows on which the instances are running,
every instance in $M_l$ made a different number of moves before the $j$th step.
Consider a $0$-entry $B_{i', j'+l}$.
There are at least as many occurrences of the distance vector $(i'-i, l)$
between two $1$-entries of $P$ as there are instances from $M_{l}$
stalled on $B_{i', j'+l}$.
Thus, on each of the $0$-entries in the $(j'+l)$th column of $B$, at most 
$r$ of the instances from $M_l$ are stalled.
Since every column of $B$ contains at most $v$ $0$-entries, we have
\begin{align*}
|M_l \setminus M_{l+1}| &\le v r \qquad \text{ and consequently } \\
|M_1 \setminus M_{w-\lfloor v\rfloor}| &\le (w-v) v r.
\end{align*}
All instances in $M_{w-\lfloor v\rfloor}$ were stalled on $B_{i,j}$ and made at least 
$w- v$ moves in steps $j,\allowbreak j+1, \dots,\allowbreak j+w-1$.

The $j$th column of $B$ contains at most $v$ $0$-entries.
Each of the more than $5k/4$ instances stalled in the $j$th step is stalled on one of 
these $0$-entries.
We thus conclude that the number of instances that made at least $w-v$ moves 
in steps $j,\allowbreak j+1, \dots,\allowbreak j+w-1$ is at least
\begin{align*}
\frac54 k - v (w-v) v r 
&\ge \frac54 k - \frac19 \left(\frac{k}{r}\right)^{2/3} r 
\left(\frac{35}{24}-\frac13\right)\left(\frac{k}{r}\right)^{1/3} \\
 &=  \frac54 k - \frac19 \cdot \frac98 k \\
 &= \frac98 k.
\end{align*}

We have
\[
\frac{w-v}{w} \ge 1- \frac{8}{35} = \frac{27}{35}
\]
and thus the number of moves in steps $j,\allowbreak j+1, \dots,\allowbreak j+w-1$ is at least 
\[
\frac98 k (w-v) \ge \frac98 \cdot \frac{27}{35} kw > \frac34 kw.
\]
\vskip-18pt
\qedhere
\end{proof}

A \emph{tight occurrence} of a $k \times k$ binary matrix $P$ in a matrix $B$ is
an occurrence of $P$ on some $k$ consecutive rows of $B$.

\begin{lemma}
\label{lem:repetition-free-in-3k}
Let $k \ge 9$ and $r\ge 3$. Let $B$ be a $3k\times 3k$ binary matrix and let
$P$ be an $r$-repetition-free $k$-permutation matrix.
If $B$ has at most $(1/3)(k/r)^{1/3}$ $0$-entries in every row and in every 
column then $B$ contains $P$.
Moreover, the occurrence of $P$ in $B$ is tight.
\end{lemma}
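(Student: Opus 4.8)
The plan is to run the $2k+1$ instances of the greedy algorithm simultaneously, one for each $k$-tuple of consecutive rows of $B$, and to show that at least one instance makes $k$ moves within the first $3k-w$ steps, which (since a failing instance makes at most $k-1$ moves) forces a tight occurrence of $P$. Set $v = (1/3)(k/r)^{1/3}$ and $w = \lceil (35/24)(k/r)^{1/3}\rceil$ as in the section, so the hypothesis is exactly that $B$ has at most $v$ $0$-entries per row and per column, and $w \le k/3$ by \eqref{eq:w-vs-k-third}. The key input is Lemma~\ref{lem:moves-now-or-later}: for every $j \le 3k-w$, either at least $3k/4$ instances move in step $j$, or the total number of moves made by all instances over the window of steps $j, j+1, \dots, j+w-1$ is at least $3kw/4$.

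First I would set up a counting argument over the $3k-w$ relevant steps (steps $1$ through $3k-w$; note that $w \le k/3$ leaves plenty of room inside the $3k$ columns). For contradiction, suppose no instance ever reaches $k$ moves, so every instance makes at most $k-1$ moves in total, and hence the grand total of moves made by all $2k+1$ instances over all $3k$ steps is at most $(2k+1)(k-1)$. On the other hand, I would use Lemma~\ref{lem:moves-now-or-later} to give a lower bound on the total number of moves: partition the steps $1, \dots, 3k-w$ greedily into blocks. Starting from step $1$, if at least $3k/4$ instances move in the current step $j$, advance by one step (this contributes $\ge 3k/4$ moves for one step's worth); otherwise, the window of $w$ steps starting at $j$ contributes $\ge 3kw/4$ moves, so advance by $w$ steps. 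Either way the amortized number of moves per step is at least $3k/4$. Covering roughly the first $3k-w$ steps this way yields a total of at least about $(3k/4)(3k-w) \ge (3k/4)(3k - k/3) = (3k/4)(8k/3) = 2k^2$ moves (one should be slightly careful at the last block, but $w \le k/3$ and $k \ge 9$ leave ample slack, so a clean bound of, say, $\ge (2k+1)(k-1)$ moves is comfortably obtained).

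Comparing the two bounds gives $2k^2 \lesssim (2k+1)(k-1) < 2k^2$, a contradiction. Hence some instance makes at least $k$ moves; when an instance makes its $k$th move it has just inspected a $1$-entry while in row $r_k$, which is precisely an occurrence of $P$ on the $k$ consecutive rows that instance was assigned — a tight occurrence. The main obstacle is purely bookkeeping: making the greedy block decomposition of the step interval $[1, 3k-w]$ honest at the boundary (so that the final partial block still has at least $3k/4$ moves per step on average) and checking that the resulting lower bound strictly exceeds $(2k+1)(k-1)$ for all $k \ge 9$; the constants $35/24$, $1/3$, and the factor $3k/4$ are chosen in Lemma~\ref{lem:moves-now-or-later} exactly so that this numerical comparison works out, so no new idea is needed beyond careful arithmetic.
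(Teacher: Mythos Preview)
Your proposal is correct and follows essentially the same approach as the paper: the paper's ``type~1/type~2'' classification of steps is exactly your greedy block decomposition, and the paper concludes with the identical comparison $(3k-w)\cdot\frac{3k}{4}\ge 2k^2 > (2k+1)(k-1)$. Your boundary worry is unnecessary---since every type~2 block starts at some $j\le 3k-w$ and has full length $w$, the typed steps always cover at least the first $3k-w$ steps with amortized $\ge 3k/4$ moves each, so no extra care is needed.
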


\begin{proof}
We assign types to some of the steps of the algorithm.
The assignment starts with the step $1$.
Let $j$ be a step considered during the assignment procedure.
If $j > 3k-w$, we finish the assignment procedure.
If at least $3k/4$ instances make a move in the $j$th step,
then we say that the $j$-th step is of \emph{type $1$} and proceed to 
the $(j+1)$st step, otherwise the steps $j,\allowbreak j+1, \dots,\allowbreak j+w-1$ are of 
\emph{type $2$} and we proceed to the $(j+w)$th step.
The number of moves in every step of type $1$ is at least $3k/4$.
By Lemma~\ref{lem:moves-now-or-later}, the 
average number of moves in steps of type $2$ is at least $3k/4$.
The total number of moves is thus at least
\[
(3k-w)\cdot\frac34 k \stackrel{\text{by~\eqref{eq:w-vs-k-third}}}{\ge} \left(3-\frac13\right) \cdot k \cdot \frac34 k = 2 k^2 > (2k+1)(k-1)
\]
and so at least one of the instances made $k$ moves and found an occurrence of $P$.
\end{proof}

\begin{theorem}
\label{thm:fixed-size}
Let $k \ge 9$ and $r \ge 3$.
Let $A$ be a $4k\times 4k$ binary matrix and let $P$ be an $r$-repetition-free 
$k$-permutation matrix.
If $A$ has at most $(k/3) (k/r)^{1/3}$ $0$-entries then $A$ contains $P$.
\end{theorem}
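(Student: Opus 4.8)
The plan is to reduce Theorem~\ref{thm:fixed-size} to Lemma~\ref{lem:repetition-free-in-3k} by a pigeonhole argument on a well-chosen $3k\times 3k$ submatrix of $A$. The obstacle Lemma~\ref{lem:repetition-free-in-3k} requires is that \emph{every} row and \emph{every} column of the submatrix have few $0$-entries (at most $(1/3)(k/r)^{1/3}$), whereas Theorem~\ref{thm:fixed-size} only controls the \emph{total} number of $0$-entries in $A$. So first I would split the $4k$ rows of $A$ into four consecutive blocks of $k$ rows each, and likewise the columns. Call a row block (resp.\ column block) \emph{heavy} if it contains more than $(k/3)(k/r)^{1/3}/4$ of the $0$-entries of $A$. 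Since the total number of $0$-entries is at most $(k/3)(k/r)^{1/3}$, there can be at most three heavy row blocks and at most three heavy column blocks; hence at least one row block $R$ and at least one column block $C$ are \emph{light}, each containing at most $(k/12)(k/r)^{1/3}$ zeros.

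Next I would look at the $3k\times 3k$ submatrix $B$ of $A$ obtained by \emph{deleting} the light row block $R$ and the light column block $C$. The point is that, because $R$ and $C$ were light, deleting them removes only few $0$-entries, so $B$ could still be dense overall but that is not yet enough — I still need a per-row and per-column bound. Here is the key observation: a row of $B$ is a row of $A$ not in $R$; after deleting the columns of $C$, its number of $0$-entries can only decrease. But a single row of $A$ might carry many zeros. To fix this I instead argue as follows: among the four column blocks at least one, $C$, is light, so every row of $A$ restricted to the complement of $C$ still has all its $0$-entries... that argument alone fails. The right move is to choose $B$ more cleverly: I would delete the \emph{heaviest} row block and the \emph{heaviest} column block of $A$, together with, if necessary, further adjustments, to guarantee the per-line bound. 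Concretely, one deletes one row block and one column block so that the remaining $3k\times 3k$ matrix $B$ has at most $v=(1/3)(k/r)^{1/3}$ zeros in every row and every column; this is possible because any row with more than $v$ zeros, or any column with more than $v$ zeros, is a ``bad line'', and the number of bad lines is small — indeed at most $(k/3)(k/r)^{1/3} / v = k$ rows and $k$ columns could be bad. Wait, that bound is exactly $k$, not enough to fit inside one deleted block of size $k$ without care; so one must be slightly more quantitative.

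Let me restate the step I actually expect to carry out. The number of rows of $A$ containing more than $v = (1/3)(k/r)^{1/3}$ zeros is less than $\big((k/3)(k/r)^{1/3}\big)/v = k$, hence at most $k-1$; similarly at most $k-1$ columns are bad. I would then split the $4k$ rows into four consecutive blocks of $k$; since fewer than $k$ rows are bad, they cannot meet all four blocks, so some row block $R$ contains no bad row. Likewise some column block $C$ contains no bad column. Deleting $R$ and $C$ yields a $3k\times 3k$ matrix $B$: every row of $B$ comes from a good row of $A$ (at most $v$ zeros, and removing columns does not increase that), and every column of $B$ comes from a good column of $A$ (at most $v$ zeros). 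Thus $B$ satisfies the hypothesis of Lemma~\ref{lem:repetition-free-in-3k}, so $B$ contains $P$, and therefore $A$ contains $P$.

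The main obstacle, and the place where I would be most careful, is making the ``fewer than $k$ bad lines'' counting tight enough that the bad rows genuinely miss one of the four blocks of $k$ consecutive rows; this needs the strict inequality $(k/3)(k/r)^{1/3} < k\cdot v$ with $v = (1/3)(k/r)^{1/3}$, which gives exactly $k < k$ unless one is careful — so in fact the correct reading is that the number of bad rows is at most $\lfloor (k/3)(k/r)^{1/3} / v \rfloor$ but since having $v$ zeros is not yet ``bad'' (bad means strictly more than $v$), the count is at most $k-1$, which does fit. Everything else is bookkeeping: a pigeonhole on four blocks, the monotonicity of the zero-count under deletion of rows/columns, and a direct appeal to Lemma~\ref{lem:repetition-free-in-3k}. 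Since containment is preserved under taking submatrices (deleting rows and columns), the occurrence of $P$ in $B$ immediately gives one in $A$, completing the proof.
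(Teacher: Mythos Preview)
Your overall strategy --- count the ``bad'' rows and columns (those with more than $v=(1/3)(k/r)^{1/3}$ zeros), delete enough of them, and apply Lemma~\ref{lem:repetition-free-in-3k} --- is exactly right, and your count that there are at most $k-1$ bad rows and at most $k-1$ bad columns is correct. But the block argument you build on top of it is broken in two places.

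First, the pigeonhole claim ``fewer than $k$ bad rows cannot meet all four blocks of $k$ consecutive rows'' is simply false: with $k\ge 9$ there can be up to $k-1\ge 8$ bad rows, and nothing prevents them from being scattered across all four blocks (two in each, say). Second --- and more fundamentally --- even if some block $R$ contained no bad row, deleting $R$ is the wrong move: the bad rows then lie in the \emph{remaining} three blocks, so your $3k\times 3k$ submatrix $B$ would still have rows with more than $v$ zeros, and Lemma~\ref{lem:repetition-free-in-3k} would not apply. You have the logic inverted: to clean up $B$ you need the deleted rows to \emph{contain} all the bad ones, not to avoid them.

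The fix is to drop the insistence on consecutive blocks, which is not needed anywhere (Lemma~\ref{lem:repetition-free-in-3k} applies to any $3k\times 3k$ binary matrix, and containment of $P$ is preserved under deleting arbitrary rows and columns). Since there are at most $k$ bad rows and at most $k$ bad columns, simply remove the $k$ rows and the $k$ columns of $A$ with the largest number of zeros; the resulting $3k\times 3k$ matrix has at most $v$ zeros in every row and every column, and Lemma~\ref{lem:repetition-free-in-3k} finishes the proof. This is precisely the paper's argument.
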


\begin{proof}[Proof of Theorem~\ref{thm:fixed-size}]
The matrix $A$ has at most $k$ rows with more than $v$ $0$-entries
and at most $k$ columns with more than $v$ $0$-entries.
Thus, after removing $k$ rows and $k$ columns with the largest number of $0$-entries,
we obtain a matrix $B$ satisfying the requirements of Lemma~\ref{lem:repetition-free-in-3k},
thus containing $P$.
\end{proof}

\begin{proof}[Proof of Theorem~\ref{thm:upper-bound-repetition-free}]
Let $k \ge 9$, $r \ge 3$ and let $P$ be an $r$-repetition-free $k$-permutation matrix.
By Theorem~\ref{thm:fixed-size}, we can use Theorem~\ref{thm:framework} with
\begin{align*}
u &= 4k \qquad \text{ and } \\
q &= 1 - \frac{1}{48 r^{1/3} k^{2/3}}.
\end{align*}
We have 
\begin{align*}
\log u &\in O(\log k) \qquad \text{ and } \\
\log q &\le -\frac{\log e}{48 r^{1/3} k^{2/3}} \le -\frac{1}{34 r^{1/3} k^{2/3}}.
\end{align*}
By Theorem~\ref{thm:framework},
\[
c_P \le 2u^{3+\lceil -\log u / \log q \rceil} 
\le 2 u^{4 + 34 r^{1/3} k^{2/3} \log u } 
= 2^{1+ 4 \log u + 34 r^{1/3} k^{2/3} \log^2 u}.
\]

Thus 
\[
c_P \le 2^{O(r^{1/3} k^{2/3}\log^2 k)}.
\qedhere
\]
\end{proof}


\section{Some additional upper bounds}
\label{sec:other-upper-bounds}
In this section, we show subexponential upper bounds on $c_P$ for a few special 
matrices that are far from being scattered.

\subsection{Grid products}
Consider a $k$-permutation matrix $P$ with $1$-entries at positions $(i,\pi(i))$ for every $i \in [k]$
and an $l$-permutation matrix $Q$ with $1$-entries at positions $(j,\rho(j))$ for every $j \in [l]$.
We define the \emph{grid product} $R = P \# Q$ to be the $(kl)$-permutation matrix with $1$-entries
at positions $((j-1)\cdot k + i, (\pi(i)-1) \cdot l + \rho(j))$ for every $i \in [k]$
and $j \in [l]$. See Figure~\ref{fig:gridI3I4} for an example.

\begin{figure}
\begin{center}
{
  \ifpdf\includegraphics{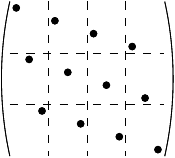}\fi
}
\end{center}
\caption{\label{fig:gridI3I4} The grid product $I_3 \# I_4$.}
\end{figure}

\begin{lemma}
\label{lem:GeneralGrid}
Let $k,l \ge 2$ and $m \ge 1$. Let $P$ be a $k$-permutation matrix, $Q$ an $l$-permutation matrix, $t = kl$ and $R = P \# Q$.
We have
\[
\ex_{R}(mt) < (mt)^2 - k\cdot((ml-1)^2-\ex_Q(ml-1)).
\]
\end{lemma}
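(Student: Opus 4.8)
The plan is to prove the contrapositive-flavored statement directly: starting from an $(mt) \times (mt)$ binary matrix $A$ that avoids $R = P \# Q$, I will exhibit a set of $k \cdot \left((ml-1)^2 - \ex_Q(ml-1)\right)$ positions in $A$ that must be $0$-entries, which forces the number of $1$-entries to be strictly below $(mt)^2 - k\cdot\left((ml-1)^2-\ex_Q(ml-1)\right)$. The natural way to find these forced zeros is to impose a block decomposition of $A$ that mirrors the grid structure of $R = P \# Q$. Recall $R$ has $1$-entries at positions $((j-1)k+i,\ (\pi(i)-1)l+\rho(j))$, so the ``outer'' permutation $P$ governs a coarse $k \times k$ arrangement of $l \times l$ subgrids, each governed by $Q$. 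I would partition the $mt = mkl$ rows of $A$ into $k$ consecutive row-intervals of $ml$ rows each, and likewise the columns into $k$ consecutive column-intervals of $ml$ columns each, obtaining $k^2$ blocks each of size $(ml)\times(ml)$.

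The key step is the following observation about the $k$ blocks sitting at the ``$P$-positions'' of this coarse grid, i.e.\ the blocks $(i,\pi(i))$ for $i \in [k]$. I claim each such block, viewed as an $(ml)\times(ml)$ binary matrix, must \emph{avoid} a specific $l$-permutation matrix closely related to $Q$ — concretely, $Q$ with one row and column deleted, or $Q$ itself sitting inside an $(ml)\times(ml)$ frame — so that it has at most $\ex_Q(ml-1)$ (or a comparable quantity; one has to be careful about whether it is $ml$ or $ml-1$) one-entries, rather than the maximum $(ml-1)^2$ the bound ascribes to it. The reason is that if even one of these $k$ diagonal-$P$ blocks contained the relevant sub-permutation of $Q$, then using the remaining $k-1$ blocks lined up along $P$ — each of which, by a pigeonhole/greedy argument on a single $1$-entry per block, contributes the corresponding $1$-entry of the outer permutation pattern $P$ — we could assemble a full occurrence of $P \# Q$ inside $A$, contradicting that $A$ avoids $R$. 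So among the $k$ blocks at positions $(i,\pi(i))$, each is missing at least $(ml-1)^2 - \ex_Q(ml-1)$ one-entries compared to the trivial count; summing over $i \in [k]$ and subtracting from the total entry count $(mt)^2$ gives the stated bound, with the strict inequality coming from the fact that $\ex_Q$ is attained by matrices of a fixed size while we have slack from the framing.

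I expect the main obstacle to be the bookkeeping around the ``$-1$'' in $ml-1$ and $\ex_Q(ml-1)$: one needs exactly the right version of the deletion argument so that avoiding $R$ in $A$ translates into avoiding a genuine $(ml-1)$-sized instance of $Q$ inside each diagonal block, which in turn is what makes $\ex_Q(ml-1)$ (and not $\ex_Q(ml)$) the correct quantity. The cleanest route is probably: fix any one of the $k$ diagonal blocks and argue that it cannot contain $Q$ even after we are forced to ``spend'' one row or column of that block on aligning with the outer pattern, hence it avoids $Q$ on an $(ml-1)\times(ml-1)$ portion. Concurrently one must verify that the other $k-1$ diagonal blocks each really do supply their required $1$-entry, which follows because an $(ml)\times(ml)$ block with more than $\ex_Q(ml-1) \ge (ml-1)^2 - (ml-1)^2$... — more simply, any nonempty block supplies \emph{a} one-entry, and since we only need one entry per outer-position block, non-emptiness (guaranteed because otherwise that block contributes $0$ and the bound is even easier) suffices. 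Once this structural claim is in place, the arithmetic is immediate: $\ex_R(mt) \le (mt)^2 - k\cdot\left((ml-1)^2 - \ex_Q(ml-1)\right) - 1 < (mt)^2 - k\cdot\left((ml-1)^2 - \ex_Q(ml-1)\right)$.
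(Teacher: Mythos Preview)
Your proposal rests on a misreading of the grid product. The matrix $R = P \# Q$ is \emph{not} obtained by placing a copy of $Q$ at each $1$-position of a $k\times k$ block grid (that would be the usual inflation of $P$ by $Q$). By definition $R$ has a $1$ at $((j-1)k+i,\ (\pi(i)-1)l+\rho(j))$ for every pair $(i,j)\in[k]\times[l]$, so $R$ has $kl$ one-entries arranged as $k$ \emph{interleaved} copies of $Q$: the $i$th copy occupies the $\pi(i)$th block of $l$ consecutive columns but lives on the rows congruent to $i$ modulo $k$. Your reconstruction step (``one copy of $Q$ in a single diagonal block plus one $1$-entry from each of the remaining $k-1$ diagonal blocks'') therefore produces only $l+(k-1)$ entries, not the $kl$ needed for an occurrence of $R$. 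More directly, your central structural claim is false: take $A$ to be all $1$s on the $(1,\pi(1))$-block of your $k\times k$ partition and all $0$s elsewhere. Any occurrence of $R$ in $A$ would need $1$-entries in each of the $k$ column-strips (since $R$ has $1$-entries in every column-block $\pi(i)$), so this $A$ avoids $R$; yet its $(1,\pi(1))$-block is the all-ones $(ml)\times(ml)$ matrix and certainly contains $Q$. Hence nothing forces the zeros you are counting, and the arithmetic at the end has no foundation.

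The paper's argument matches the interleaved structure of $R$ rather than a block structure. It slices $A$ into $k$ vertical strips of width $ml$, shifts them vertically by $0,1,\dots,k-1$ according to $\pi$, and takes their entrywise conjunction to form an $(mt-k)\times ml$ matrix $A'$; a copy of $Q$ on rows of $A'$ that are $k$ apart then unfolds into a full copy of $R$ in $A$. Since each $0$-entry of $A$ creates at most one $0$-entry of $A'$, pigeonholing over the $k$ residue classes of rows yields an $(ml-1)\times ml$ submatrix of $A'$ with fewer than $(ml-1)^2-\ex_Q(ml-1)$ zeros, hence containing $Q$. This superposition-and-residue step is where the ``$ml-1$'' in the statement genuinely originates, and it is not something your $k\times k$ block decomposition can recover.
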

\begin{proof}
Let $z = (ml-1)^2-\ex_Q(ml-1)$, that is, the minimum number of $0$-entries in an 
$(ml-1) \times (ml-1)$ $Q$-avoiding matrix.
Let $A$ be an $mt \times mt$ matrix with at most $z k - 1$ $0$-entries. 
We show that $A$ contains $R$.

We cut $A$ into $k$ rectangles of width $ml$, rearrange them on top of each other with a small vertical displacement determined by $P$, and form their ``superposition'' matrix $A'$.
See Figure~\ref{fig:gridProof}.
Formally, let $A'$ be the $(mt-k) \times m l$ matrix
such that for every $i \in [mt-k]$ and $j \in [m l]$, ${A'}_{ij}=1$ if and only if 
for every $\alpha \in [k]$, $a_{i+\alpha-1, j+ml(\pi(\alpha)-1)} = 1$.
Since every element of $A$ is used to define at most one element of $A'$, the number of $0$-entries in $A'$ is at most $(zk - 1)$.
Notice that if $A'$ contains the matrix $Q'$ obtained from $Q$ by inserting 
$k-1$ rows full of zeros between every pair of consecutive rows of $Q$, then $A$ contains $R$.

\begin{figure}
\begin{center}
{
  \ifpdf\includegraphics{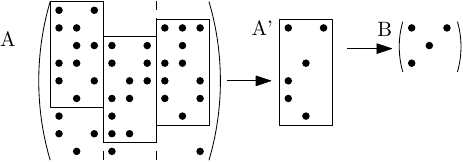}\fi
}
\end{center}
\caption{\label{fig:gridProof} Construction of the matrices $A'$ and $B$ in the proof of Lemma~\ref{lem:GeneralGrid}.
In this example, we have $k=l=3$, $m=1$, and $\pi(1)=1$, $\pi(2)=3$, $\pi(3)=2$.}
\end{figure}

Let $B$ be the $(ml-1) \times m l$ matrix formed by the set of rows 
$\{p+\alpha k: \alpha \in \{0, \dots, ml-2\}\}$ of $A'$ where $p$ is chosen 
from $[k]$ so as to minimize the number of $0$-entries of $B$.
Thus $B$ has at most $z-1$ $0$-entries and so it contains $Q$.
An occurrence of $Q$ in $B$ implies an occurrence of $Q'$ in $A'$.
Consequently, $A$ contains $R$.
\end{proof}

\begin{theorem}
\label{thm:GeneralGrid}
Let $k,l \ge 2$.
Let $Q$ be an $l$-permutation matrix with F\"{u}redi--Hajnal constant $c_Q \ge 3$ 
and let $P$ be a $k$-permutation matrix.
Let $R$ be the $kl$-permutation matrix $P \# Q$. 
Then
\[
c_R \le 2^{O(k \log^2 (c_Q k))}.
\]
\end{theorem}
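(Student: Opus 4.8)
The plan is to combine Lemma~\ref{lem:GeneralGrid} with the framework Theorem~\ref{thm:framework}. The idea is that $c_Q \ge 3$ controls $\ex_Q$, and hence controls the quantity $z = (ml-1)^2 - \ex_Q(ml-1)$, which is the ``density deficit'' appearing in Lemma~\ref{lem:GeneralGrid}. Choosing $m$ of the order of (a polynomial in) $c_Q$ should make $z$ comparable to $(ml)^2$ up to a constant factor, so that Lemma~\ref{lem:GeneralGrid} yields a matrix $A$ of side $u = mt = mkl$ with the property that $\ex_R(u) < u^2 - k z < q u^2$ for some $q$ bounded away from $1$ by roughly $1/\mathrm{poly}(c_Q k)$. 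Then plugging this $u$ and $q$ into Theorem~\ref{thm:framework} gives $c_R \le 2 u^{3+\lceil -\log u/\log q\rceil}$, and since $\log u \in O(\log(c_Q k))$ and $-1/\log q \in O(\mathrm{poly}(c_Q k))$, one reads off the bound $c_R \le 2^{O(k\log^2(c_Q k))}$ after checking that the polynomial in $c_Q$ inside the exponent only contributes $O(k)$ rather than $O(k\log k)$ — i.e.\ the $k$ in the exponent comes from the ``$k z$'' saving in Lemma~\ref{lem:GeneralGrid} being a factor $k$ stronger than the trivial bound, while the $\log^2$ comes from $u^{\log u}$-type growth in the framework.

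First I would make the choice of $m$ precise. Since $c_Q = \lim \ex_Q(n)/n$ and $\ex_Q$ is superadditive, we have $\ex_Q(n) \le c_Q n$ for all $n$; hence $z = (ml-1)^2 - \ex_Q(ml-1) \ge (ml-1)^2 - c_Q(ml-1) = (ml-1)(ml-1-c_Q)$. Taking $m := \lceil 2c_Q \rceil$ (so $ml - 1 \ge 2c_Q l - 1 \ge 2c_Q$, using $l \ge 2$) gives $z \ge (ml-1)(ml-1)/2 \ge (ml-1)^2/2 \ge (ml)^2/8$, say, for $ml$ not too small. Then $\ex_R(u) < u^2 - kz \le u^2 - k(ml)^2/8 = u^2(1 - k(ml)^2/(8u^2)) = u^2(1 - 1/(8k))$ since $u = kml$. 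So we may take $q = 1 - 1/(8k)$, which is independent of $c_Q$! That is the key point that keeps the exponent at $O(k \log^2(c_Q k))$ rather than something worse.

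Now I would feed $u = kml = k\lceil 2c_Q\rceil l$ and $q = 1 - 1/(8k)$ into Theorem~\ref{thm:framework}. We need $q \in (1/u, 1)$, which holds for $k \ge 1$ once $u$ is large enough (certainly for $k,l \ge 2$ and $c_Q \ge 3$). We get
\[
c_R \le 2 u^3 u^{\lceil -\log u/\log q\rceil}.
\]
Since $\log q = \log(1 - 1/(8k)) \le -\log e/(8k) \le -1/(6k)$, we have $-\log u/\log q \le 6k\log u$, so $c_R \le 2 u^{3 + 1 + 6k\log u} = 2^{1 + 4\log u + 6k\log^2 u}$. Finally $u = k\lceil 2c_Q\rceil l \le k \cdot 3c_Q \cdot l$; using $l \le c_Q$ (which follows from the trivial lower bound $c_Q \ge 2(l-1)$, or one can just carry $l$ along since it only adds an additive $O(\log l) \subseteq O(\log c_Q)$ term because the Stanley--Wilf/F\"uredi--Hajnal limit of an $l$-permutation is at least roughly $l$), we get $\log u \in O(\log(c_Q k))$, whence $c_R \le 2^{O(k\log^2(c_Q k))}$, as claimed.

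The main obstacle I anticipate is purely bookkeeping: verifying the small-case hypotheses so that all the inequalities ($z \ge (ml)^2/8$, $q \in (1/u,1)$, $\log u \le O(\log(c_Q k))$) hold uniformly given only $k,l \ge 2$ and $c_Q \ge 3$, and making sure the parameter $m$ is a genuine positive integer (which is why $c_Q \ge 3$ is assumed — it guarantees $m \ge 1$ comfortably and that $ml-1$ exceeds $c_Q$). There is also a subtlety in how $l$ enters: one must argue that $\log l$ is absorbed into $\log(c_Q k)$, which is immediate from the elementary bound $c_Q \ge 2(l-1)$ for any $l$-permutation matrix $Q$ (so $l \le c_Q/2 + 1$). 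Beyond these routine checks, the argument is a direct substitution, and no genuinely new idea is needed past Lemma~\ref{lem:GeneralGrid} and Theorem~\ref{thm:framework}.
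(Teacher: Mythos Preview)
Your proposal is correct and follows essentially the same approach as the paper: apply Lemma~\ref{lem:GeneralGrid} with a suitable $m$, extract $q = 1 - \Theta(1/k)$, and feed $u = mkl$ and $q$ into Theorem~\ref{thm:framework}. The only cosmetic difference is that the paper takes $m = \lceil 2c_Q/l\rceil$ (so $u \le 3c_Qk$ directly and $q = 1 - 1/(9k)$), whereas you take $m = \lceil 2c_Q\rceil$ and then absorb the extra factor of $l$ into $\log u$ via $c_Q \ge 2(l-1)$; either way the bound is the same.
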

\begin{proof}
We use Theorem~\ref{thm:framework} with $u = mkl$, where $m = \lceil 2 c_Q / l \rceil$.
Since $c_Q \ge 2(l-1)$ (see e.g.~\cite[Claim 1]{Cibulka09}) and $l \ge 2$, 
we have $m < (2c_Q+l)/l \le 3 c_Q / l$ and $u \le 3 c_Q k$.

Since $\ex_{Q}(ml-1) \le c_Q (ml-1)$, Lemma~\ref{lem:GeneralGrid} implies
\begin{align*}
\ex_{R}(u) 
&< u^2 - k\cdot ((ml-1)^2 - c_Q (ml-1)) \\
&= u^2 - k\cdot(ml-1)(ml-1- c_Q) \\
&\le u^2 - k\cdot(2c_Q-1)(c_Q-1) \qquad \text{ since $m \ge 2 c_Q / l$} \\
&\le u^2 - kc^2_Q \qquad \text{ since $c_Q \ge 3$}\\
&\le u^2 - km^2l^2 / 9 \qquad \text{ since $m \le 3 c_Q / l$} \\
&= u^2 (1 - 1 / (9 k)).
\end{align*}
That is, $\ex_{R}(u) \le u^2 q$, where $q = 1-1/(9k)$.
We estimate
\[
\log q = \log\big(1-1/(9k)\big) < -\log(e)/(9k).
\]
By Theorem~\ref{thm:framework}, we have
\[
c_R \le 2 u^3 u^{\lceil - \log u / \log q \rceil}
\le u^{O(k \log u)}
\le 2^{O(k \log^2 (c_Q k))}. \qedhere
\]
\end{proof}

\begin{proof}[Proof of Theorem~\ref{thm:FoxGrid}]
We have $G_k = I_{\sqrt{k}} \# I_{\sqrt{k}}$, where $I_{\sqrt{k}}$ is the $\sqrt{k} \times \sqrt{k}$ 
identity matrix. It is known that $c_{I_{\sqrt{k}}}=2(\sqrt{k}-1)$ (see e.g.~\cite[Claim 1]{Cibulka09}).
Thus, using Theorem~\ref{thm:GeneralGrid} with $P=Q=I_{\sqrt{k}}$, we get
\[
c_{G_k} 
\le 2^{O(\sqrt{k} \log^2 (2 \sqrt{k} \sqrt{k}))}
\le 2^{O(\sqrt{k} \log^2 k)}. \qedhere
\]
\end{proof}

\begin{remark}
Guillemot and Marx~\cite{GuillemotMarx} define the \emph{canonical $r \times s$ grid permutation}
as $I_r \# J_s$, where $J_s$ is the \emph{reversal matrix}
with $1$-entries at positions $(i,j)$ satisfying $i+j = s+1$.
Theorem~\ref{thm:GeneralGrid} thus gives the upper bound $c_{I_r \# J_s} \le 2^{O(r \log^2(rs))}$.
In general, the same asymptotic upper bound is obtained for any grid product $P \# Q$ where 
$P$ is an $r$-permutation matrix and $Q$ is an $s$-permutation matrix with $c_Q$ polynomial in $s$.
\end{remark}

\subsection{The cross matrix}

\begin{lemma}
\label{lem:XMatrix}
For every integer $k \ge 6$ that is a multiple of $6$, we have
\[
\ex_{X_k}(2k) < (2k)^2 - k^2/18.
\]
\end{lemma}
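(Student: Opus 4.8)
The plan is to adapt the superposition idea from the proof of Lemma~\ref{lem:GeneralGrid} to the matrix $X_k$, which has $1$-entries on both diagonals. The key structural observation is that $X_k$ essentially decomposes into two identity-like pieces that overlap only in the middle, so I want to exhibit a $2k \times 2k$ matrix with few $0$-entries forcing an occurrence of $X_k$ by folding it into a smaller ``superposition'' matrix in which a short identity (or reversal) suffices.

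First I would set up the fold. Given a $2k \times 2k$ binary matrix $A$ with fewer than $k^2/18$ $0$-entries, I would split the rows of $A$ into two halves of $k$ rows each and form, from each half, a superposition with a vertical displacement of $2$ (to leave room for inserting zero rows between consecutive diagonal entries, exactly as the matrix $Q'$ is built from $Q$ in Lemma~\ref{lem:GeneralGrid}). Concretely, I would build a matrix $A'$ of size roughly $(k-O(1)) \times 2k$ whose $(i,j)$-entry is $1$ iff both $a_{i,j}=1$ and $a_{i+k, 2k+1-j}=1$ (so that a single $1$-entry of $A'$ simultaneously certifies one entry on each of the two diagonals of the folded-up copy of $X_k$); each entry of $A$ is used at most twice here, so $A'$ has fewer than $k^2/9$ $0$-entries among its roughly $2k^2$ entries, i.e.\ density of $0$-entries below $1/18$ per row on average. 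Then, as in Lemma~\ref{lem:GeneralGrid}, I would pass to a submatrix $B$ of $A'$ by selecting a suitably spaced subset of rows minimizing the number of $0$-entries, reducing to forcing a short identity matrix $I_{k/3}$ (or similar) inside a matrix of size about $(k/3)\times(k/3)$ whose number of $0$-entries is below $(k/3 - 1)^2 - \ex_{I_{k/3}}(k/3-1)$; since $\ex_{I_m}(m-1) = (m-1)^2$ is attained only by the all-ones matrix of that size minus the diagonal, any deficiency forces $I_{k/3}$, hence $X_k$ in $A$.

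The main obstacle I expect is bookkeeping the middle overlap of the two diagonals of $X_k$: the two diagonals share (or nearly share) the central row and column, so the naive ``two independent identities'' picture is not literally correct, and the displacement and row-selection parameters ($2k$ rows, displacement $2$, the factor $1/18$) must be chosen so that the folded matrix really does encode $X_k$ and not merely an interval minor that is weaker than $X_k$. I would handle this by treating the top-left-to-bottom-right diagonal and the anti-diagonal as giving, after the fold, two monotone sequences of prescribed rows for the columns of a single $\tfrac{k}{3}$-permutation submatrix of $X_k$, verifying that an occurrence of that submatrix together with the displacement-$2$ zero padding lifts back to a genuine occurrence of $X_k$ in $A$. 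Once the arithmetic of the displacement and the averaging over row-selections is checked against the bound $k^2/18$ (this is where divisibility of $k$ by $6$ is used, to make $k/3$ and $k/6$ integers and to keep the spaced row-subset well-defined), the inequality $\ex_{X_k}(2k) < (2k)^2 - k^2/18$ follows by the contrapositive. I expect the routine part to be the comparison of the three relevant $0$-entry counts ($k^2/18$ in $A$, the doubled count in $A'$, and the per-row-selection count in $B$), and the delicate part to be the exact specification of the fold so that $X_k$, and not a proper interval minor of it, is what gets forced.
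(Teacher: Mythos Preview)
Your fold is the wrong one. In any occurrence of $X_k$ in $A$, the two $1$-entries coming from the $l$-th row of $X_k$ (at columns $l$ and $k{+}1{-}l$) land in the \emph{same} row of $A$; nothing pairs row $i$ with row $i{+}k$. With your definition $A'_{i,j}=1 \iff a_{i,j}=a_{i+k,\,2k+1-j}=1$, an occurrence of $I_m$ in $A'$ at rows $i_1<\dots<i_m$ and columns $j_1<\dots<j_m$ yields $1$-entries of $A$ at $(i_l,j_l)$ and $(i_l+k,\,2k{+}1{-}j_l)$. In the induced $2m\times 2m$ submatrix this is the pattern $(l,l)$ for $l\le m$ together with $(m{+}l,\,2m{+}1{-}l)$, i.e.\ the direct sum of $I_m$ and a reversal---a layered matrix, not $X_{2m}$. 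So the fold does not ``encode $X_k$'' as claimed.

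Your endgame is also broken. You aim to force $I_{k/3}$ inside a matrix of size about $(k/3)\times(k/3)$ and invoke the quantity $(k/3-1)^2-\ex_{I_{k/3}}(k/3-1)$; but trivially $\ex_{I_m}(m-1)=(m-1)^2$ and $\ex_{I_m}(m)=m^2-1$ (kill one diagonal entry), so the margin you are relying on is $0$ or $1$, not $\Theta(k^2)$. Even if you correct the fold to the centrally symmetric one (so that an $I_{k/2}$ in the folded $k\times k$ matrix genuinely lifts to $X_k$), you would still need to force $I_{k/2}$ in a $k\times k$ matrix with up to $k^2/18$ zeros, and $\ex_{I_{k/2}}(k)$ is of order $k^2$, so the naive extremal bound does not close the argument.

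The paper's proof is entirely different and much shorter: it works directly on the diagonals of $A$. Among the $k/3+1$ diagonals $\{(i,j):i-j=d\}$ with $|d|\le k/6$, pigeonhole gives one with at most $k/6$ zeros; similarly one anti-diagonal $\{(i,j):i+j=c\}$ with $|c-(2k{+}1)|\le k/6$ has at most $k/6$ zeros. Writing $(r,s)$ for their (near-)crossing, the $4$-element sets $S_i=\{(r\pm i,\,s\pm i)\}$ for $1\le i\le 5k/6$ all lie on those two lines, so at most $k/3$ of them contain a zero; the remaining $\ge k/2$ all-one sets $S_i$ are precisely an occurrence of $X_k$. No superposition or reduction to a smaller extremal problem is needed; the divisibility by $6$ just makes $k/3$ and $k/6$ integers in this count.
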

\begin{proof}
Let $A$ be a $2k \times 2k$ matrix with at most $k^2/18$ $0$-entries. 

Given $d \in \{-2k+1, -2k+2, \dots, 2k-2, 2k-1\}$, the \emph{$d$-diagonal} of $A$ is the set of entries
at positions $(i,j)$ satisfying $i,j \in [2k]$ and $i-j = d$.
Given $c \in \{2, 3, \dots, 4k\}$, the \emph{$c$-antidiagonal} of $A$ is the set of entries
at positions $(i,j)$ satisfying $i,j \in [2k]$ and $i+j = c$.

Since $A$ has at most $k^2/18$ $0$-entries, there exists $d \in \{-k/6, -k/6+1, \dots, k/6\}$
such that the $d$-diagonal contains at most $k/6$ $0$-entries.
Analogously, there is $c \in \{2k-k/6+1, 2k-k/6+2, \dots, 2k+k/6+1\}$
such that the $c$-antidiagonal contains at most $k/6$ $0$-entries.

Let $r = (c+d)/2$ and $s = (c-d)/2$.
Note that if $c$ and $d$ have the same parity, then the entry at position $(r,s)$ is the intersection of
the $d$-diagonal and the $c$-antidiagonal.
We have
\begin{equation}
\label{eq:XMatrixRange}
k - \frac{k}{6} + \frac12 \le r,s \le k + \frac{k}{6} + \frac12.
\end{equation}
If $c$ and $d$ have the same parity, for every $i \in \{1, \dots, 5k/6\}$, let $S_i$ be the set of entries at positions $(r-i, s-i)$, $(r-i, s+i)$, $(r+i, s-i)$ and $(r+i, s+i)$. Similarly, if $c$ and $d$ have the opposite parity, for every $i \in \{1/2, 3/2, \dots, 5k/6-1/2\}$, let $S_i$ be the set of entries at positions $(r-i, s-i)$, $(r-i, s+i)$, $(r+i, s-i)$ and $(r+i, s+i)$. 
Note that by~\eqref{eq:XMatrixRange}, the entries of each such $S_i$ lie in $A$.
Additionally, all these entries lie in the union of the $d$-diagonal and the $c$-antidiagonal, thus there are only
at most $2k/6$ $0$-entries among them.

Let $I$ be the set of at least $k/2$ indices $i$ such that all the four entries of $S_i$ are $1$-entries.
The set $\bigcup_{i\in I} S_i$ forms an occurrence of $X_k$ in $A$.
\end{proof}

\begin{proof}[Proof of Theorem~\ref{thm:CrossMatrix}]
We use Theorem~\ref{thm:framework} with $u = 2k$ and $q=1-1/72$. In particular, $\log q$ is a negative constant.
By Lemma~\ref{lem:XMatrix}, $\ex_{X_k}(u) < qu^2$ and thus by Theorem~\ref{thm:framework}, we have
\[
c_{Q} \le 2 u^3 u^{\lceil -\log u / \log q \rceil}
\le 2^{O(\log^2 k)}.
\qedhere
\]
for every permutation matrix $Q$ contained in $X_k$.
\end{proof}


\section{General permutation matrices}
\label{section_minor}

In this section we prove Theorem~\ref{thm:upper-bound-Jk-minor}.

Given $r,k,s,t \in \mathbb{N}$, 
let $f_{r,k}(t,s)$ be the maximum number of rows in a $J_{r,k}$-minor-free 
binary matrix with $t$ columns where each row contains at least $s$ $1$-entries.
Notice that if $s > t$ then $f_{r,k}(t,s) = 0$ since a matrix cannot have more $1$-entries 
in a row than the number of columns.

Fox~\cite[Lemma 14]{Fox13} proved the following recurrence for every 
$r,k,s,t \in \mathbb{N}$ with $t$ and $s$ even and satisfying $s \le t$:
\begin{equation}
\label{eq_rekurence_frk}
f_{r,k}(t,s) \le 2 f_{r,k}(t/2,s) + 2 f_{r,k-1}(t/2, s/2).
\end{equation}
Then he used it to prove the following upper bound on $f_{r,k}(t,s)$~\cite[Lemma 15]{Fox13}:
\begin{equation}
\label{eq_upper_frk}
f_{r,k}(t,s) \le r 2^{k-1} t^2/s.
\end{equation}
In Lemma~\ref{lemma_frk} we further improve this upper bound by a factor of $2^{k-1}/s$. Fox~\cite{Fox13} used the upper bound~\eqref{eq_upper_frk} only with $s=2^{k-1}$. We use our Lemma~\ref{lemma_frk} also with $s$ equal to larger powers of $2$, which better approximate the number of $1$-entries in a given row.

\begin{lemma}
\label{lemma_frk}
For every $r,k,s,t \in \mathbb{N}$ where $t$ and $s$ are powers of $2$ and 
$t \ge s \ge 2^{k-1}$, we have
\[
f_{r,k}(t,s) \le r 2^{2k-2} (t/s)^2.
\]
\end{lemma}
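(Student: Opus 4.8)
The plan is to induct on $k$ using Fox's recurrence $f_{r,k}(t,s) \le 2 f_{r,k}(t/2,s) + 2 f_{r,k-1}(t/2,s/2)$, with the induction running on the quantity $\log(t/s)$ or equivalently on $t$ for fixed $s$, and with $k$ as a secondary parameter. The claimed bound $f_{r,k}(t,s) \le r 2^{2k-2}(t/s)^2$ has the shape one expects: halving $t$ cuts the first term's budget by a factor of $4$ but doubles its multiplicity (net factor $1/2$), while the second term drops $k$ by one and also halves $s$, so $(t/s)^2$ is unchanged but $2^{2k-2}$ becomes $2^{2k-4}$ — again a net factor below $1$ — and these should combine to close the induction.

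First I would handle the base cases. There are two kinds: the base in $k$ and the base in $t/s$. For $k=1$: a $J_{r,1}$-minor-free matrix is one in which no contraction of columns yields a single column with $r$ ones, which (since $J_{r,1}$ is a single column of $r$ ones) means every column of the matrix contains at most $r-1$ ones after... — more carefully, $J_{r,1}$-minor-free means the matrix has at most $r-1$ nonzero rows, hence at most $r-1$ rows total if every row must contain at least $s \ge 1$ ones; I would check that $r-1 \le r \cdot 2^0 \cdot (t/s)^2 = r(t/s)^2$, which holds. For the base in the ratio, when $t = s$ (so $t/s = 1$): each of the $t$ columns, and each row has all $s=t$ entries equal to $1$, so the matrix is all-ones; it contains a $J_{r,k}$-minor as long as it has at least $r$ rows and at least $k$ columns. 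Since $t = s \ge 2^{k-1} \ge k$, the column condition is fine, so a $J_{r,k}$-minor-free all-ones matrix has at most $r-1$ rows, and $r - 1 \le r \cdot 2^{2k-2}$. I would also need to make sure the recurrence is only applied when $t/2 \ge s$, i.e. $t \ge 2s$; the case $t = s$ is exactly the excluded base case, so the case analysis is clean.

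Next, the inductive step: assume $t \ge 2s$ and apply Fox's recurrence. For the first term, $f_{r,k}(t/2,s) \le r 2^{2k-2} (t/(2s))^2 = r 2^{2k-2}(t/s)^2 / 4$ by induction (valid since $t/2 \ge s \ge 2^{k-1}$). For the second term, $f_{r,k-1}(t/2,s/2) \le r 2^{2(k-1)-2}(t/s)^2 = r 2^{2k-4}(t/s)^2$ by induction — here I must check the hypothesis $t/2 \ge s/2 \ge 2^{k-2}$, which follows from $t \ge s \ge 2^{k-1}$. Also I should treat the edge case $k=1$ for the second term separately since $f_{r,0}$ is degenerate (a $J_{r,0}$-minor is empty, so every nonempty matrix contains it, forcing $f_{r,0} = 0$), which only helps. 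Plugging in:
\[
f_{r,k}(t,s) \le 2 \cdot \frac{r 2^{2k-2}(t/s)^2}{4} + 2 \cdot r 2^{2k-4}(t/s)^2 = r 2^{2k-3}(t/s)^2 \cdot \tfrac12 + r 2^{2k-3}(t/s)^2 \cdot \tfrac12,
\]
wait — let me recompute: $2 \cdot r 2^{2k-2}(t/s)^2/4 = r 2^{2k-3}(t/s)^2 / 1 = r2^{2k-3}(t/s)^2 \cdot \tfrac{1}{1}$; more simply $2 \cdot 2^{2k-2}/4 = 2^{2k-3}$ and $2 \cdot 2^{2k-4} = 2^{2k-3}$, so the sum is $r(2^{2k-3}+2^{2k-3})(t/s)^2 = r 2^{2k-2}(t/s)^2$, exactly the claimed bound. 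So the induction closes tightly.

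The main obstacle I anticipate is not the arithmetic — which works out exactly as shown — but being careful about the range conditions and the parity requirement in Fox's recurrence. Fox's recurrence requires $t$ and $s$ even; since we assume $t$ and $s$ are powers of $2$ with $t \ge s \ge 2^{k-1}$, as long as we are not in the base case $t = s = 1$ (which forces $k = 1$ and is handled directly) we have $s \ge 2$, so $s$ is even, and $t \ge 2s \ge 4$ in the inductive step so $t$ is even; thus the recurrence applies. I would also double-check the degenerate index cases ($k = 1$ using the second term, or $s = t$) are genuinely covered by base cases rather than accidentally invoking the recurrence with out-of-range parameters. Once these bookkeeping points are pinned down, the proof is a short double induction.
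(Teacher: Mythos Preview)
Your proof is correct and follows essentially the same approach as the paper: induction on $k + \log(t/s)$ using Fox's recurrence, with the same arithmetic $2\cdot 2^{2k-2}/4 + 2\cdot 2^{2k-4} = 2^{2k-2}$ closing the induction exactly. The only cosmetic difference is that the paper uses the trivial observation $f_{r,k}(t,s)=0$ when $s>t$ as its boundary case (so the first term of the recurrence vanishes when $t=s$), whereas you handle $t=s$ directly by noting the matrix is all-ones; both work, and the paper names the induction parameter $k+\log(t/s)$ explicitly where you leave it implicit.
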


\begin{proof}
The claim is trivially true when $s>t$, because then
\[
f_{r,k}(t,s) = 0 \le r 2^{2k-2} (t/s)^2.
\]
The claim is also true when $k=1$ and $t \ge s \ge 1$:
\[
f_{r,1}(t,s) = r-1 \le r 2^{2k-2} (t/s)^2.
\]
We proceed by induction on $k + \log(t/s)$, which is an integer since $s$ and 
$t$ are powers of $2$. 
By~\eqref{eq_rekurence_frk}, we have
\begin{align*}
f_{r,k}(t,s) 
& \le 2 f_{r,k}(t/2,s) + 2 f_{r,k-1}(t/2, s/2) \\
& \le 2 r 2^{2k-2} (t/2s)^2 + 2 r 2^{2k-4} (t/s)^2 \\
& \le r 2^{2k-2} (t/s)^2.
\qedhere
\end{align*}

\end{proof}

\begin{proof}[Proof of Theorem~\ref{thm:upper-bound-Jk-minor}]
Fix $k$ and let $t = 2^{2k}$. Further, let $s_i = 2^{i}$ for every $i \in \{k-1, k, \dots, 2k\}$.
Let $A$ be an $n \times n$ binary matrix.
We discard $n \ \text{mod}\ t$ rightmost columns and bottommost rows of $A$ and
split the rest of  $A$ into $\lfloor n/t \rfloor \times \lfloor n/t \rfloor$ blocks 
of size $t \times t$.

We say that a $t \times t$ block of $A$ is \emph{$s_i$-wide}
if it has at least $s_i$ nonempty columns, and \emph{$s_i$-tall} if it has at least $s_i$ nonempty columns.

By contracting the blocks, we form an $\lfloor n/t \rfloor \times \lfloor n/t \rfloor$ matrix that does not contain $J_k$ as an interval minor, and thus it has at most $\exminor_{J_k}(\lfloor n/t \rfloor)$ $1$-entries.
The number of $1$-entries in the blocks that are neither $s_{k-1}$-wide nor
$s_{k-1}$-tall is thus at most 
\[
\exminor_{J_k}(s_{k-1}) \cdot \exminor_{J_k}(\lfloor n/t \rfloor).
\]

If $A$ has $\lfloor n/t \rfloor f_{k,k}(t,s_i)$ $s_i$-wide blocks then some 
$f_{k,k}(t,s_i)$ of them are on the same columns of $A$. This implies that $J_k$ is an interval minor of $A$.
An $s_i$-wide block that is neither $s_{i+1}$-wide nor $s_{i+1}$-tall 
contains at most $\exminor_{J_k}(s_{i+1})$ $1$-entries.
The total number of $1$-entries in blocks that are $s_i$-wide but neither 
$s_{i+1}$-wide nor $s_{i+1}$-tall in an $n \times n$ $J_k$-avoiding matrix
is thus at most
\[
\exminor_{J_k}(s_{i+1}) \cdot \frac{n}{t} \cdot f_{k,k}(t, s_i).
\]
The same bound holds for blocks that are $s_i$-high but neither $s_{i+1}$-wide nor 
$s_{i+1}$-high.
The number of entries in the discarded rows and columns is together smaller than 
$2t n$.

The claim of Theorem~\ref{thm:upper-bound-Jk-minor} is clearly true whenever $n \le 2^{2k}$. 
By induction on $n$ and by Lemma~\ref{lemma_frk}, we have
\begin{align*}
\exminor_{J_k}(n) 
& \le \exminor_{J_k}(s_{k-1}) \cdot \exminor_{J_k}(\lfloor n/t \rfloor)+ 2 t n + 
\sum_{i=k-1}^{2k-1} 2 \cdot \exminor_{J_k}(s_{i+1}) \cdot \frac{n}{t} \cdot f_{k,k}(t, s_i) \\
& \le 2^{2(k-1)} \cdot \frac{8}{3} (k+1)^2 \cdot 2^{4k}\cdot \frac{n}{2^{2k}} + 
2^{2k+1} n + 
2 \cdot \frac{n}{2^{2k}} \cdot \sum_{i=k-1}^{2k-1} (s_{i+1})^2 \cdot k \cdot 2^{2k-2}\cdot \frac{t^2}{s_i^2} \\
& \le \frac{2}{3} (k+1)^2 \cdot 2^{4k} n + 
2^{2k+1} n + 
2n \cdot 2^{-2k} \cdot 2^{2k-2} k \cdot \sum_{i=k-1}^{2k-1} 2^{2i+2} \cdot 2^{4k-2i}\\
& \le \frac{2}{3} (k+1)^2 \cdot 2^{4k} n + 2^{2k+1} n + 2k (k+1) \cdot 2^{4k}n \\
& \le \frac{8}{3} (k+1)^2 \cdot 2^{4k} n.
\qedhere
\end{align*}

\end{proof}


\section{Improved upper bound on the F\"{u}redi--Hajnal limit in terms of the Stanley--Wilf limit}
\label{sec:swtofh}

In this section we prove Theorem~\ref{thm:swtofh}. To achieve this, we refine the recursive method used by Marcus and Tardos~\cite{MarcusTardos} to obtain a finite upper bound on $c_P$, and extended by the first author~\cite{Cibulka09} to obtain an upper bound on $c_P$ in terms of $s_P$. Our refinement combines two additional ideas: distinguishing the density of blocks by powers of $2$, as in the proof of Theorem~\ref{thm:upper-bound-Jk-minor},
and performing an induction with general rectangular $m\times n$ matrices instead of square matrices. 

Throughout this section, $P$ is a fixed $k\times k$ permutation matrix.

\subsection{Height and width compression}

The inductive step in Marcus--Tardos's proof of the F\"{u}redi--Hajnal conjecture~\cite{MarcusTardos} involves 
contracting blocks of size $k^2 \times k^2$ in an $n\times n$ matrix into single entries, resulting in a matrix of size $n/k^2 \times n/k^2$.
This operation can be regarded as a composition of two operations,
\emph{height compression} and \emph{width compression}, which we define next.

Let $m,n$ and $t$ be positive integers, with $m$ divisible by $t$, and let $A$ be a binary $m\times n$ matrix.
\emph{Height $t$-compression} (of $A$) is an operation that consists of splitting $A$ into ``vertical'' blocks of size $t\times 1$, and replacing every such block $B$ with a $1$-entry if $B$ contains at least one $1$-entry, and with a $0$-entry otherwise. As a result of applying height $t$-compression to $A$ we obtain a matrix of size $(m/t) \times n$.
\emph{Width $t$-compression} differs only by interchanging the role of rows and columns: we assume $n$ divisible by $t$, we are splitting $A$ into ``horizontal'' blocks of size $1\times t$, replacing them analogously with single entries, and the result is a matrix of size $m \times (n/t)$.
See Figure~\ref{fig:compressions}.

\begin{figure}
\begin{center}
{
  \ifpdf\includegraphics{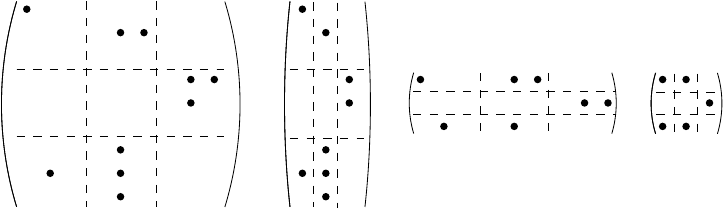}\fi
}
\end{center}
\caption{\label{fig:compressions} 
An example of a matrix subdivided into blocks, the results of its width and height $3$-compression, and the result of the block contraction.}
\end{figure}

We use the following notation from Section~\ref{section_filozofie}:
we let $f_P(t,s)$ be the maximum possible number of rows of a binary $P$-avoiding matrix 
with $t$ columns and at least $s$ $1$-entries in every row.
Then $f_{P^T}(t,s)$ is the maximum possible number of columns of a binary $P$-avoiding matrix 
with $t$ rows and at least $s$ $1$-entries in every column.
We also define $g_P(t,s)$ as the maximum possible number of $1$-entries in a binary $P$-avoiding
matrix with $t$ columns and at least $s$ $1$-entries in every row.

The next lemma gives a recursive upper bound on the number of $1$-entries in a binary $P$-avoiding matrix, obtained by height compression and width compression.

\begin{lemma}
\label{lem:compression}
Let $m,n,t,s$ be positive integers satisfying $t \ge s \ge 1$.
Then for $n$ divisible by $t$, we have
\[
\ex_P(m, n) \le \frac{n}{t} \cdot g_P(t,s) + (s-1)\cdot\ex_P\left(m,\frac{n}{t}\right),
\]
and for $m$ divisible by $t$, we have
\[
\ex_P(m, n) \le \frac{m}{t} \cdot g_{P^T}(t,s) + (s-1)\cdot\ex_P\left(\frac{m}{t}, n\right).
\]
\end{lemma}

\begin{proof}
Let $A$ be an $m \times n$ binary $P$-avoiding matrix.
For every $i \in [n/t]$, let $S_i$ be the $m\times t$ block of $A$ formed by the columns $(i-1) \cdot t+1, (i-1) \cdot t+2, \dots, i \cdot t$. 
We call a row of $S_i$ \emph{wide} if it has at least $s$ $1$-entries.
The wide rows of $S_i$ contain together at most $g_P(t,s)$ $1$-entries.
From each $S_i$ we remove all $1$-entries in the wide rows, and apply width $t$-compression to the resulting $m \times n$ matrix. This operation transforms each $S_i$ into a single column, and as a result we obtain an $m \times (n/t)$ binary $P$-avoiding matrix.
Since each contracted $1\times t$ block contained at most $s-1$ $1$-entries, the number of $1$-entries in $A$ was at most $(n/t) \cdot g_P(t,s) + (s-1)\cdot\ex_P(m, n/t)$.

The second inequality is obtained similarly using height $t$-compression.
\end{proof}

\subsection{Marcus--Tardos's recursion as a composition of height and width compression}

To illustrate our method of proving Theorem~\ref{thm:swtofh} in a simpler setting,
we first express the Marcus--Tardos's proof using height and width compression.

Marcus and Tardos~\cite{MarcusTardos} observed that $f_P(k^2,k) \le k \cdot \binom{k^2}{k}$. 
Since each row in a matrix with $k^2$ columns has at most $k^2$ $1$-entries,
we have $g_P(k^2,k) \le k^3 \cdot \binom{k^2}{k}$.
Assume that $n$ is divisible by $k^2$ and let $A$ be an $n \times n$ $P$-avoiding binary matrix.
Applying the second part of Lemma~\ref{lem:compression} with $t=k^2$ and $s=k$, and then the first part again with $t=k^2$ and $s=k$,
we get
\begin{align*}
\ex_P(n, n) 
&\le \frac{n}{k^2} \cdot g_{P^T}(k^2,k) + (k-1) \cdot \ex_P(n/k^2, n) \\
&\le \frac{n}{k^2} \cdot g_{P^T}(k^2,k) + (k-1) \cdot \frac{n}{k^2} \cdot g_P(k^2,k)  + (k-1)^2 \cdot \ex_P(n/k^2, n/k^2) \\
&\le n k^2 \cdot \binom{k^2}{k} + (k-1)^2 \cdot \ex_P(n/k^2, n/k^2).
\end{align*}
Solving this recurrence gives
\[
\ex_P(n,n) \le nk^3 \cdot \binom{k^2}{k}.
\]

\subsection{Outline of the proof}

In the proof of Theorem~\ref{thm:swtofh}, we will use height and width compression.
However, instead of using a single bound $f_P(k^2,k)$, we will refine the analysis by splitting the rows into several groups according to their number of $1$-entries, and use a more precise bound $f_P(a, 2^i b)$  on the number of rows in $i$th such group.
We will also use different pairs of the values $a,b$ in different stages of the proof.

We prepare all the necessary upper bounds on $f_P(a, 2^i b)$ in Subsection~\ref{subsec:swtofhbase},
combine them to two bounds on $g_P(a, b)$ in Subsection~\ref{subsec:swtofhdensenarrow}
and finish the proof in Subsection~\ref{subsec:swtofhinduction}.

\subsection{Upper bounds on the number of dense rows in a narrow matrix}
\label{subsec:swtofhbase}

Lemma~\ref{lem:swtofhContractionEnabler} is a generalization of a lemma from the proof of the earlier
upper bound on $c_P$ in terms of $s_P$~\cite[Lemma 3]{Cibulka09}. In the rest of this section, the symbol $e$ always stands for Euler's number.

\begin{lemma}
\label{lem:swtofhContractionEnabler}
Let $a$ and $b$ be integers satisfying $a\ge b \ge 1$.
If $b^2 \ge ae^2 s_P$, then
\[
f_P(a,b) < \frac{ae^2 s_P}{b}.
\]
\end{lemma}
\begin{proof}

Let $d = \lceil ae^2 s_P / b \rceil$.
Suppose, for a contradiction, that $B$ is a $P$-avoiding $d \times a$ matrix containing at least $b$ $1$-entries in each row. We show that $B$ contains occurrences of too many distinct $d$-permutation matrices.

We first estimate the total number of occurrences of $d$-permutation matrices in $B$.
We start by choosing one of the first $b$ $1$-entries in the first row and continue through all the rows,
always choosing a $1$-entry in a column from which no $1$-entry has been chosen.
Since $d \le b$, the total number of occurrences of $d$-permutation matrices in $B$ is at least $b!/(b-d)!$.

The number of occurrences of a fixed $d$-permutation matrix in $B$ is at most the number of ways to choose
a set of $d$ columns of $B$, which is $\binom{a}{d}$.
Every permutation matrix contained in $B$ avoids $P$.
Thus, a lower bound on the number of distinct $d$-permutation matrices occurring in $B$ is also a lower bound 
on the size of $S_P(d)$, that is,
\[
|S_P(d)| \ge \frac{b!/(b-d)!}{\binom{a}{d}} > \frac{(b/e)^d}{(ae/d)^d} = \left(\frac{bd}{ae^2}\right)^d.
\]
By the supermultiplicativity of $|S_P(d)|$, we have
\[
s_P = \lim_{n \to \infty}|S_P(nd)|^{1/(nd)} \ge |S_P(d)|^{1/d} > \frac{bd}{ae^2} \ge s_P,
\]
a contradiction.
\end{proof}

The following lemma gives upper bounds on $f_P(a,b)$ for a broad spectrum of densities $b/a$. 
For large densities, we apply Lemma~\ref{lem:swtofhContractionEnabler} directly.
For smaller densities, we combine two iterations of Lemma~\ref{lem:swtofhContractionEnabler} with height compression.
This is similar to the different treatment of wide and very wide blocks in~\cite{Cibulka09}.

\begin{definition}
\label{def:tightTriple}
We call an ordered triple $(b_0, b_1, b_2)$ of positive integers a \emph{tight triple}
if 
\begin{itemize}
\item $b_0 \ge b_1 \ge b_2$,
\item $b_1^2 > b_0 e^2 s_P$, \ and
\item $b_2^2 > b_1 e^2 s_P$.
\end{itemize}
\end{definition}

\begin{lemma}
\label{lem:swtofhRowBoundGeneral}
Let $(b_0, b_1, b_2)$ be a tight triple.
Then
\begin{enumerate}
\item[{\rm 1)}]
for every $i \in \left\{0,1,\ldots, \left\lfloor \log \left(b_0/b_1\right) \right\rfloor \right\}$, we have
\[
f_P(b_0, 2^i b_1) < b_1/2^i, \ \text{ and}
\]
\item[{\rm 2)}]
for every $i \in \left\{0,1,\ldots, \left\lfloor \log \left(b_1/b_2\right) \right\rfloor \right\}$,
we have
\[
f_P(b_0, 2^i b_2) < b_1 \lceil b_2/2^i \rceil \le b_1 b_2 / 2^{i-1}.
\]
\end{enumerate}
\end{lemma}

\begin{proof}
First we prove part 1). Since 
$i \le \left\lfloor \log \left(b_0/b_1\right) \right\rfloor$, we have $2^i b_1 \le b_0$.
By Definition~\ref{def:tightTriple}, the values $a=b_0$ and $b=2^ib_1$ satisfy the conditions
of Lemma~\ref{lem:swtofhContractionEnabler} and we obtain
\[
f_P(b_0, 2^i b_1) < b_0 e^2 s_P / (2^i b_1) < b^2_1 / (2^i b_1) = b_1 / 2^i .
\]

Now we prove part 2).
Let $B$ be a $P$-avoiding matrix
with $b_0$ columns and with at least $2^i b_2$ $1$-entries in every row. 
We split the rows of $B$ into blocks of $\lceil b_2 / 2^i\rceil$ consecutive rows.
If a block has all its $1$-entries in at most $b_1$ columns, then
these columns of the block induce a $\lceil b_2 / 2^i\rceil \times b_1$
matrix with $b_2 2^i$ $1$-entries in every row. Using part 1) for the triple $(b_1,b_2,b_2)$ we get $f_P(b_1, 2^i b_2) < b_2/2^i$, which is a contradiction.
Thus every block has at least $b_1$ nonzero columns and height
compression of the block creates a row with at least $b_1$ $1$-entries.
By part 1)
with $i=0$, there are less than $b_1$ such rows in the compressed matrix and thus
$B$ has less than $b_1 \lceil b_2/2^i\rceil$ rows. Therefore,
\[
f_P(b_0, b_2 2^i) < b_1 \lceil b_2/2^i\rceil .
\]

To prove the second inequality, we verify that $b_2 / 2^i>1$. Indeed, since $2^i \le b_1/b_2$, we have
\[
b_2 / 2^i \ge b^2_2 / b_1 > e^2 s_P > 1.
\]
This implies $\lceil b_2/2^i \rceil < 2 \cdot (b_2 / 2^i).$
\end{proof}

\subsection{Upper bounds on the number of 1-entries in a dense narrow matrix}
\label{subsec:swtofhdensenarrow}

By grouping the rows of a matrix according to their density and applying
Lemma~\ref{lem:swtofhRowBoundGeneral} to each such group, we get the following upper bounds
on the number of $1$-entries in the matrix.

\begin{lemma}
\label{lem:swtofhWideRowOnesCount}
Let $(b_0, b_1, b_2)$ be a tight triple.
Then
\begin{enumerate}
\item[{\rm 1)}]
$g_P(b_0,b_1) \le 2b_1^2 \cdot \log(2 b_0)$ and
\item[{\rm 2)}]
$g_P(b_0,b_2) \le 5b_1 b_2^2 \cdot \log(2 b_0)$.
\end{enumerate}
\end{lemma}

\begin{proof}
Let $B$ be a binary $P$-avoiding matrix with $b_0$ columns. 
For every $i \in \left\{0,1,\ldots, \left\lfloor \log \left(b_0/b_1\right) \right\rfloor \right\}$,
$B$ contains at most $f_P(b_0, 2^i b_1)$ rows with at least $2^i b_1$
(and less than $2^{i+1} b_1$) $1$-entries.
Thus, by the first part of Lemma~\ref{lem:swtofhRowBoundGeneral},
\begin{align*}
g_P(b_0,b_1) &\le
\sum_{i=0}^{\lfloor \log(b_0/b_1) \rfloor} f_P(b_0, 2^i b_1) \cdot 2^{i+1} b_1
\le \sum_{i=0}^{\lfloor \log(b_0/b_1) \rfloor} \frac{b_1}{2^i} \cdot 2^{i+1} b_1
= \sum_{i=0}^{\lfloor \log(b_0/b_1) \rfloor} 2 b_1^2 \\
& \le 2 b_1^2 \cdot \log(2 b_0).
\end{align*}

For every $i \in \left\{0,1,\ldots, \left\lfloor \log \left(b_1/b_2\right) \right\rfloor \right\}$,
$B$ contains at most $f_P(b_0, 2^i b_2)$ rows with at least $2^i b_2$
(and less than $2^{i+1} b_2$) $1$-entries.
Thus, by the second part of Lemma~\ref{lem:swtofhRowBoundGeneral},
\begin{align*}
g_P(b_0,b_2) - g_P(b_0,b_1) 
&\le \sum_{i=0}^{\lfloor \log(b_1/b_2) \rfloor} f_P(b_0, 2^i b_2) \cdot 2^{i+1} b_2
\le \sum_{i=0}^{\lfloor \log(b_1/b_2) \rfloor} \frac{b_1 b_2}{2^{i-1}} \cdot 2^{i+1} b_2 \\
&= \sum_{i=0}^{\lfloor \log(b_1/b_2) \rfloor} 4 b_1 b_2^2 
\le  4 b_1b_2^2 \cdot \log(2 b_1/b_2) \le 4 b_1b_2^2 \cdot \log(2 b_0).
\end{align*}

That is,
\[
g_P(b_0,b_2) \le 4 b_1b_2^2 \cdot \log(2 b_0) + g_P(b_0,b_1).
\]

Since $(b_0, b_1, b_2)$ is a tight triple, we have $b_1 \le b_2^2 / (e^2 s_P) \le b_2^2 / 2$.
Therefore
\begin{align*}
g_P(b_0,b_1) &\le b_1 b_2^2 \cdot \log(2 b_0) \qquad \text{and so} \\
g_P(b_0,b_2) &\le 4 b_1 b_2^2 \cdot \log(2 b_0) + b_1 b_2^2 \cdot \log(2 b_0) = 5b_1 b_2^2 \cdot \log(2 b_0).
\qedhere
\end{align*}
\end{proof}

\subsection{Upper bounds on the extremal function for rectangular matrices}
\label{subsec:swtofhinduction}

Let $\ex_P(m,n)$ be the maximum number of $1$-entries in an $m \times n$ binary matrix that avoids $P$.
If $x,y$ are positive real numbers, we let $\ex_P(x,y) = \ex_P(\lfloor x \rfloor, \lfloor y \rfloor)$. Clearly, the function $\ex_P: \mathbb{R}^+ \times \mathbb{R}^+ \rightarrow \mathbb{N} \cup \{0\}$ is nondecreasing in each coordinate, 
and satisfies the following trivial inequality for all $x,y>0$:
\begin{equation}
\label{eq_zaokrouhleni1}
\ex_P(x, \lceil y \rceil) \le \ex_P(x, y ) + x. 
\end{equation}
In the rest of the section, we will regard $m$ and $n$ as real variables.

We now prove two recursive formulas for the extremal function $\ex_P$, combining width compression and Lemma~\ref{lem:swtofhWideRowOnesCount}.

\begin{lemma}
\label{lem:swtofhGeneralInductionStep}
Let $m, n \ge 1$ be real numbers and let $(b_0, b_1, b_2)$ be a tight triple.
We have
\begin{enumerate}
\item[{\rm 1)}] $\ex_P(m, n) \le b_1\cdot \ex_P(m, n / b_0) + 2\cdot(b_1^2 / b_0) \cdot \log(2 b_0) \cdot n + b_1 m$ and
\item[{\rm 2)}] $\ex_P(m, n) \le b_2\cdot \ex_P(m, n / b_0) + 5\cdot(b_1 b_2^2 / b_0) \cdot \log(2 b_0) \cdot n + b_2 m$.
\end{enumerate}
\end{lemma}

\begin{proof}
To prove the first part, we use Lemma~\ref{lem:compression} with $t=b_0$ and $s=b_1$ and apply the upper bound on $g_P(b_0,b_1)$
from the first part of Lemma~\ref{lem:swtofhWideRowOnesCount}: 
\begin{align*}
\ex_P(m, n) \le \ex_P(\lfloor m \rfloor, b_0\lceil n/b_0\rceil)
&\le b_1 \cdot\ex_P(m, \lceil n / b_0 \rceil) + \frac{n}{b_0}\cdot g_P(b_0,b_1) \\
&\le b_1 \cdot\ex_P(m, \lceil n / b_0 \rceil) + 2\cdot\frac{b_1^2}{b_0} \cdot \log(2 b_0) \cdot n \\
&\le b_1 \cdot\ex_P(m, n / b_0) + 2\cdot\frac{b_1^2}{b_0} \cdot \log(2 b_0) \cdot n + b_1 m 
\qquad \text{by \eqref{eq_zaokrouhleni1}}.
\end{align*}

To prove the second part, we use Lemma~\ref{lem:compression} with $t=b_0$ and $s=b_2$
and apply the upper bound on $g_P(b_0,b_2)$ from the second part of Lemma~\ref{lem:swtofhWideRowOnesCount} to obtain
\begin{align*}
\ex_P(m, n) &\le \ex_P(\lfloor m \rfloor,b_0\lceil n/b_0\rceil)
\le b_2 \cdot\ex_P(m, \lceil n / b_0 \rceil) + 5\cdot\frac{b_1 b_2^2}{b_0} \cdot \log(2 b_0) \cdot n \\
&\le b_2 \cdot\ex_P(m, n / b_0) + 5\cdot\frac{b_1 b_2^2}{b_0} \cdot \log(2 b_0) \cdot n + b_2 m
\qquad \text{by \eqref{eq_zaokrouhleni1}}.
\qedhere
\end{align*}

\end{proof}

We now briefly sketch the strategy of the proof of Theorem~\ref{thm:swtofh}. We will be sequentially applying Lemma~\ref{lem:swtofhGeneralInductionStep}. 
For a given $b_0$, we always use the smallest values of $b_1$ and $b_2$ such that $(b_0, b_1, b_2)$
is a tight triple.
From the two bounds in Lemma~\ref{lem:swtofhGeneralInductionStep} we always choose the one that gives the smaller upper bound for the given parameters. 
We restrict ourselves to sequences of applications of Lemma~\ref{lem:swtofhGeneralInductionStep}
that alternate between width and height compression.
Thus it remains to choose the sequence of values of the compression factors $b_0$.
We optimize among sequences where we use one value of $b_0$ for the first application of the lemma
and a second value for all the subsequent applications.
We found that the optimal value of $b_0$ for the first application is $\Theta(s_P^4)$,
and for the subsequent ones $\Theta(s_P^5)$.
In other words, in the proof of Theorem~\ref{thm:swtofh}, we first apply width $\Theta(s_P^4)$-compression to the given $n\times n$ matrix, and then we will alternate height $\Theta(s_P^5)$-compression and width $\Theta(s_P^5)$-compression. During the process, we will be obtaining \emph{thin} matrices with height/width ratio $\Theta(s_P^4)$, and \emph{wide} matrices with height/width ratio $\Theta(s_P^{-1})$.

We derive upper bounds on the extremal function $ex_P$ of square, thin and wide matrices recursively.
First, in Lemma~\ref{lem:swtofh:indukcniKrok} we use the first part of
Lemma~\ref{lem:swtofhGeneralInductionStep} to derive an upper bound on $ex_P$ for thin matrices in terms of the extremal function for smaller wide matrices.
Similarly, we use the second part of Lemma~\ref{lem:swtofhGeneralInductionStep} to obtain an upper bound on $ex_P$ for wide matrices in terms of the extremal function for smaller thin matrices.

In Corollary~\ref{cor:swtofh:rectangular}, we derive explicit upper bound on $ex_P$ for thin matrices.

Finally, by combining Lemma~\ref{lem:swtofhGeneralInductionStep} with Corollary~\ref{cor:swtofh:rectangular}, we obtain an upper bound on $ex_P$ for square matrices.

We now proceed with detailed proofs.
We computed the following parameters that we substitute for $(b_0,b_1,b_2)$.

\begin{observation}
\label{obs:tightTriples}
The triples $(\lceil e^{19} s_P^{5} \rceil, \lceil e^{11} s_P^{3} \rceil,
\lceil e^7 s_P^{2} \rceil)$ and $(\lceil e^{3} s_P^{4} \rceil, \lceil e^{3} s_P^{2.5} \rceil,
\lceil e^3 s_P^{1.75} \rceil)$ are tight triples when $s_P$ is large enough.
\end{observation}

\begin{lemma}
\label{lem:swtofh:indukcniKrok}
We have
\begin{enumerate}
\item[{\rm 1)}]
$\ex_P(m, s^{-4}_P\cdot m) \le e^{11.1} s^3_P \cdot \ex_P(e^{-19}s^{-5}_P\cdot m, s^{-4}_P\cdot m) + O(s_P \log(s_P)) \cdot m.$
\item[{\rm 2)}]
$\ex_P(e^{-19}s^{-1}_P\cdot m, m) \le e^{7.1} s^2_P \cdot \ex_P(e^{-19}s^{-1}_P\cdot m, e^{-19} s^{-5}_P\cdot m) + O(s_P^2 \log(s_P)) \cdot m.$
\end{enumerate}
\end{lemma}

\begin{proof}
Let $b_0 = \lceil e^{19} s_P^{5} \rceil$, $b_1 = \lceil e^{11} s_P^{3} \rceil$
and $b_2 = \lceil e^7 s_P^{2} \rceil$.
By Observation~\ref{obs:tightTriples}, $(b_0, b_1, b_2)$ is a tight triple, and so we can use
Lemma~\ref{lem:swtofhGeneralInductionStep}.
When $s_P$ is large enough, we have $b_1 \le e^{11.1} s_P^3$ and $b_2 \le e^{7.1} s_P^2$. 
By the first part of Lemma~\ref{lem:swtofhGeneralInductionStep}, we have

\begin{align*}
\ex_P(s^{-4}_P\cdot m, m) 
&\le b_1 \cdot \ex_P(s^{-4}_P\cdot m, e^{-19}s^{-5}_P\cdot m) + \frac{2 b_1^2}{b_0} \log(2 b_0) \cdot m + b_1 s^{-4}_P \cdot m \\
&\le e^{11.1} s^3_P \cdot \ex_P(s^{-4}_P\cdot m, e^{-19}s^{-5}_P\cdot m) + O\left(s_P \log(s_P)\right) \cdot m.
\end{align*}

Since $s_P = s_{P^T}$, this implies
\[
\ex_P(m, s^{-4}_P\cdot m) 
\le e^{11.1} s^3_P \cdot \ex_P(e^{-19}s^{-5}_P\cdot m, s^{-4}_P\cdot m) + O\left(s_P \log(s_P)\right) \cdot m.
\]

By the second part of Lemma~\ref{lem:swtofhGeneralInductionStep}, we have
\begin{align*}
\ex_P(e^{-19}s^{-1}_P\cdot m, m) 
&\le b_2 \cdot \ex_P(e^{-19}s^{-1}_P\cdot m, e^{-19} s^{-5}_P\cdot m) + \frac{5 b_1 b_2^2}{b_0} \cdot \log(2 b_0) \cdot m \\ &\quad + b_2 \cdot e^{-19} s^{-1}_P \cdot m\\
&\le e^{7.1} s^2_P \cdot \ex_P(e^{-19}s^{-1}_P\cdot m, e^{-19} s^{-5}_P\cdot m) + O\left(s^2_P \log(s_P)\right) \cdot m.
\qedhere
\end{align*}
\end{proof}

\begin{corollary}
\label{cor:swtofh:rectangular}
We have 
$
\ex_P(n, s^{-4}_P\cdot n) \le O(s_P \log(s_P)) \cdot n.
$
\end{corollary}

\begin{proof}
Combining the first and second part of Lemma~\ref{lem:swtofh:indukcniKrok} with $m=n$ and $m=s^{-4}_P \cdot n$, respectively, 
we have
\begin{align}
\ex_P(n, s^{-4}_P\cdot n) &\le e^{11.1} s^3_P \cdot \ex_P(e^{-19}s^{-5}_P\cdot n, s^{-4}_P\cdot n) + O(s_P \log(s_P)) \cdot n. \\
\label{eq_thin}
&\le e^{18.2} s^5_P \cdot \ex_P(e^{-19}s^{-5}_P\cdot n, e^{-19} s^{-9}_P\cdot n) + O(s_P \log(s_P)) \cdot n.
\end{align}

Let $c_1>0$ be a constant such that for all $n\ge 1$, the second term in \eqref{eq_thin} is bounded from above by $c_1\cdot s_P \log(s_P)\cdot n$. Solving the recursion, we get
\[
\ex_P(n, s^{-4}_P\cdot n) \le c_1\cdot s_P \log(s_P) \cdot n \cdot  \sum_{k=0}^{\infty} e^{-0.8k} \le O(s_P \log(s_P)) \cdot n.
\qedhere
\]
\end{proof}

We are now ready to finish the proof of Theorem~\ref{thm:swtofh}.

\begin{proof}[Proof of Theorem~\ref{thm:swtofh}]
Combining the second part of Lemma~\ref{lem:swtofhGeneralInductionStep} with $b_0 = \lceil e^3 s_P^{4} \rceil$, $b_1 = \lceil e^3 s_P^{2.5} \rceil$ and $b_2 = \lceil e^3 s_P^{1.75} \rceil$, and Corollary~\ref{cor:swtofh:rectangular}, we get
\begin{align*}
\ex_P(n, n) 
&\le \ex_P(n, e^{3} n) \\
&\le b_2 \cdot \ex_P(n, s^{-4}_P n) + \frac{5 b_1 b_2^2}{b_0} \cdot \log(2b_0) \cdot n + b_2 n \\
&\le O(s_P^{1.75} \cdot s_P \log(s_P) \cdot n) + O(s_P^2 \log(s_P) \cdot n) + O(s^{1.75}_P n)  \\
&= O(s_P^{2.75} \log(s_P)) \cdot n.
\qedhere
\end{align*}
\end{proof}

\section{Higher-dimensional matrices}
\label{sec:higherdim}

Let $P$ be a given $d$-dimensional $k$-permutation matrix $P$. Recall that $S_P(n)$ 
denotes the set of all $d$-dimensional $n$-permutation matrices avoiding $P$. 
Let $T_P(n)$ be the set of all $d$-dimensional matrices of size $n \times \dots \times n$ that avoid $P$.

\subsection{Upper bound}
\label{subsec:HighDimSWUpper}

The proof of the upper bound in Theorem~\ref{thm:higherdim} uses the following high-dimensional 
generalization of a lemma by Fox~\cite[Lemma 11]{Fox13}, which he used in his simplified proof of the bound $s_P \in O(c_P)^2$.

\begin{lemma}
\label{lem:high-dim-fox-reduction}
Let $P$ be a $d$-dimensional permutation matrix.
Let $t,u \in \mathbb{N}$ and let $n=tu$. 
Then
\[
\lvert S_P(n)\rvert \le t^{dn} \lvert T_P(u) \rvert
\]
\end{lemma}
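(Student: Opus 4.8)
The statement to prove is Lemma~\ref{lem:high-dim-fox-reduction}: for a $d$-dimensional permutation matrix $P$ and $n=tu$, we have $\lvert S_P(n)\rvert \le t^{dn}\lvert T_P(u)\rvert$. The plan is to mimic Fox's one-dimensional argument by partitioning each of the $d$ coordinate axes into $u$ consecutive intervals of length $t$ each, and then ``contracting'' each $n\times\dots\times n$ permutation matrix down to a $u\times\dots\times u$ binary matrix that records only which blocks are nonempty. The main point is that this contracted matrix still avoids $P$ (so it lies in $T_P(u)$), and that the number of $n$-permutation matrices mapping to any fixed contracted matrix is small, namely at most $t^{dn}$.

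\textbf{Key steps.} First I would fix the block decomposition of $[n]$ in each coordinate: write $[n]$ as the concatenation of intervals $J_1,\dots,J_u$ with $|J_a|=t$, and for a $d$-tuple $(a_1,\dots,a_d)\in[u]^d$ call $J_{a_1}\times\dots\times J_{a_d}$ the $(a_1,\dots,a_d)$-block. Given $Q\in S_P(n)$, define $\overline{Q}\in\{0,1\}^{[u]\times\dots\times[u]}$ by setting $\overline{Q}_{a_1,\dots,a_d}=1$ iff the corresponding block of $Q$ contains a $1$-entry. Second, I would check $\overline{Q}\in T_P(u)$: if $P$ were contained in $\overline{Q}$ via increasing injections $g_i:[k]\to[u]$, then inside each selected nonempty block we could pick an actual $1$-entry of $Q$, and the resulting coordinatewise-increasing selection of $1$-entries of $Q$ would witness $P\subseteq Q$ (here one uses that $1$-entries in distinct blocks along any axis are separated in that coordinate, so the injections obtained by composing $g_i$ with the choice of representative are genuinely increasing), contradicting $Q\in S_P(n)$. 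Third, I would bound the fibers of the map $Q\mapsto\overline{Q}$: since $Q$ is a permutation matrix it has exactly $n$ $1$-entries, and each $1$-entry lies in exactly one block; once the set of nonempty blocks is fixed by $\overline{Q}$, each $1$-entry of $Q$ must be placed inside its block, and a block has $t^d$ cells. Since $Q$ has $n$ one-entries and choosing the position of each within its block costs a factor at most $t^d$, there are at most $(t^d)^n=t^{dn}$ permutation matrices in any fiber. Multiplying the number of possible images $\lvert T_P(u)\rvert$ by the maximum fiber size $t^{dn}$ gives the claimed bound.

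\textbf{Main obstacle.} The routine part is the fiber count; the part needing care is the second step, verifying $P\not\subseteq\overline{Q}$. The subtlety is that when $P$ has several $1$-entries whose contracted images coincide in some coordinate, one must make sure that picking representative $1$-entries of $Q$ inside the chosen blocks still yields strictly increasing injections in every coordinate — but this is automatic as long as distinct chosen blocks in a given axis force strictly larger coordinates for their contents, which holds because the blocks $J_1,\dots,J_u$ are pairwise disjoint and ordered. A secondary point is that $P$ is a permutation matrix, so all its $1$-entries are in pairwise distinct positions along every coordinate; combined with the fact that the $g_i$ are injective, the images under contraction are also distinct along each coordinate, so no two $1$-entries of $P$ are forced into the same block, and the representative-selection is unambiguous. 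Once these observations are in place, the inequality $\lvert S_P(n)\rvert \le t^{dn}\lvert T_P(u)\rvert$ follows immediately by the pigeonhole/counting argument above.
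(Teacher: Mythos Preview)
Your approach is the same as the paper's: contract $t\times\cdots\times t$ blocks, observe that the contracted matrix $\overline{Q}$ still avoids $P$, and bound the fibers of $Q\mapsto\overline{Q}$. Your treatment of the avoidance step is in fact more careful than the paper's, which simply asserts it. However, your fiber count has a real gap.

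You write: ``once the set of nonempty blocks is fixed by $\overline{Q}$, each $1$-entry of $Q$ must be placed inside its block, and a block has $t^d$ cells.'' But $\overline{Q}$ does \emph{not} determine, for a given $1$-entry of $Q$, what ``its block'' is: $\overline{Q}$ only records which blocks are nonempty, not how the $n$ one-entries are distributed among them. In particular two $1$-entries of $Q$ can share a block (e.g.\ $d=2$, $t=u=2$, $Q=I_4$: the entries $(1,1),(2,2)$ both lie in block $(1,1)$), so $\overline{Q}$ can have strictly fewer than $n$ one-entries and there is no canonical bijection between one-entries of $Q$ and nonempty blocks. As stated, your argument would equally ``justify'' the trivial bound of $n^{d}$ choices per one-entry; the factor $t^d$ is not yet earned.

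The missing step, which the paper supplies, is a slice argument. Index the $1$-entries of $Q$ by their first coordinate $i\in[n]$. The $i$th one-entry lies in a block whose first block-index is $a:=\lceil i/t\rceil$, so it remains to choose the other $d-1$ block-indices from among the $1$-entries of the $a$th slice of $\overline{Q}$. Since $Q$ is a permutation matrix, exactly $t$ of its $1$-entries have first coordinate in $J_a$, hence the $a$th slice of $\overline{Q}$ has at most $t$ one-entries; this gives at most $t$ choices for the block. Within the chosen block the first coordinate is fixed to $i$ and the remaining $d-1$ coordinates each range over $t$ values, contributing $t^{d-1}$. Thus $t\cdot t^{d-1}=t^d$ choices per $i$, and $t^{dn}$ in total. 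Once you insert this observation your proof is complete and coincides with the paper's.
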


\begin{proof}
Let $A$ be a $d$-dimensional $n$-permutation matrix that avoids $P$.
We split $A$ into blocks of size $t \times t \times \cdots \times t$ by hyperplanes orthogonal
to the coordinate axes.
Let $B$ be the $u \times u \times \cdots \times u$ matrix formed by contracting these blocks.
The number of choices for $B$ is at most $T_P(u)$.

Let the $i$th \emph{slice} of a $d$-dimensional matrix be the set of entries with first coordinate equal to $i$.
Since $A$ is a permutation matrix, each slice of $A$ contains exactly one $1$-entry and each slice of
$B$ contains at most $t$ $1$-entries.

We fix one such $B$ and count the number of $d$-dimensional $n$-permutation matrices $A$ whose
contraction gives $B$.
For every $i$, the $1$-entry in the $i$th slice of $A$ can correspond to one of the at most $t$ $1$-entries
in the $\lfloor i/t \rfloor$th slice of $B$.
After selecting this $1$-entry of $B$, we have $t^{d-1}$ positions for the $1$-entry in the $i$th slice of $A$.
Hence, the number of $d$-dimensional $n$-permutation matrices $A$ whose contraction gives $B$ is at most $(t^d)^n$.
\end{proof}

\begin{proof}[Proof of the upper bound in Theorem~\ref{thm:higherdim}]
We assume without loss of generality that $n=2^m$ for some $m\in\mathbb{N}$. Our plan is to show an upper bound on $|T_P(u)|$ for a suitable $u=2^i$ and then use Lemma~\ref{lem:high-dim-fox-reduction}. Every $2^i \times \dots \times 2^i$  $d$-dimensional $P$-avoiding binary matrix can be built by a sequence of expansions from smaller $d$-dimensional $P$-avoiding matrices, reversing the contraction operation of $2\times \dots \times 2$ blocks.
We start with $A_0$, the $1\times \dots \times 1$ $d$-dimensional matrix containing one $1$-entry.
In each step, we transform the matrix $A_i$ of size $2^i \times \dots \times 2^i$ 
into a matrix $A_{i+1}$ of size $2^{i+1} \times \dots \times 2^{i+1}$ by replacing each $0$-entry 
of $A_i$ by a $2 \times \dots \times 2$ block containing only $0$-entries and each $1$-entry
of $A_i$ by a $2 \times \dots \times 2$ block containing at least one $1$-entry. There is a 
single possibility of replacing a $0$-entry and $2^{2^d}-1$ possibilities of replacing a 
$1$-entry.

We use the high-dimensional generalization of the F\"{u}redi--Hajnal conjecture, that is, 
the estimate $\ex_P(n) = \Theta_{P,d}(n^{d-1})$~\cite{KlazarMarcus}. 
Thus, $\ex_P(2^i) \le c_P 2^{i(d-1)}$ for some constant $c_P$ and so
\[ |T_P(2^i)|
\le 2^{2^d \cdot c_P \cdot 2^{(i-1)(d-1)}} \cdot |T_P({2^{i-1}})|
\le \cdots \le 2^{2^d \cdot c_P \cdot 2^{i(d-1)}}.
\]

We select 
\[
i=\left\lfloor \frac{1}{d-1} \cdot \log\left(\frac{n}{2^d \cdot c_P}\right) \right\rfloor,
\] 
so that $|T_P({2^i})| \le 2^{n}$. We have
\[
2^{i} \ge \frac{1}{2} \cdot \left(\frac{n}{2^d \cdot c_P}\right)^{1/(d-1)} 
= \left(\frac{n}{2^{2d-1} \cdot c_P}\right)^{1/(d-1)}.
\]

By Lemma~\ref{lem:high-dim-fox-reduction}
with $u=2^i$ and $t=n/2^i$, we have
\begin{align*}
|S_P(n)| &\le t^{dn} |T_P(2^i)| \le n^{dn} \cdot 2^{-idn} \cdot 2^n
\le n^{dn} \cdot (2^{2d-1} \cdot c_P / n)^{dn/(d-1)} \cdot 2^{n} \\
&\le n^{dn(1-1/(d-1))} \cdot (2^{2d-1} \cdot c_P)^{dn/(d-1)} \cdot 2^{n} \\
& \le (n^n)^{d-d/(d-1)} \cdot \left(2^{2d} \cdot c_P\right) ^{d n/(d-1)}
\qquad \text{ and, using } n! \ge \left(\frac{n}{e}\right)^n, \\
& \le (e^n n!)^{d-1-1/(d-1)} \cdot \left(2^{2d} \cdot c_P\right) ^{d n/(d-1)} \\
& \le (n!)^{d-1-1/(d-1)} \cdot e^{dn} \cdot \left(2^{2d} \cdot c_P\right)^{d n/(d-1)}.
\end{align*}

Using the upper bound $c_P \le 2^{O_d(k)}$ implied by the result of Geneson and Tian~\cite[Equation (4.5)]{GenesonTian}, we obtain
\[
|S_P(n)| \le \left(2^{O_d(k)}\right)^n (n!)^{d-1-1/(d-1)}. \qedhere
\]
\end{proof}

\subsection{Lower bound}
\label{subsec:HighDimSWLower}

A partial order $\prec$ on $[n]$ is an \emph{intersection} of $d$ linear orders $<_1$, $<_2$, \dots, $<_d$ on $[n]$ if $\forall a, b \in [n]\ (a \prec b\Leftrightarrow \forall i \in [d]\ \  a <_i b)$.
A partial order $\prec$ has \emph{dimension} $d$ if $d$ is the smallest positive integer such that $\prec$ is an intersection of $d$ linear orders.
A \emph{random $d$-dimensional partial order} on $[n]$ is the intersection of $d$ linear orders on $[n]$ taken 
uniformly and independently at random. 
A partial order is an \emph{antichain} if no two elements are comparable by the partial order.
A linear order $<$ on $[n]$ is a linear extension of $\prec$ if $\forall a,b \in [n]\  (a \prec b \Rightarrow a<b$).

Brightwell~\cite{Brightwell92} showed the following lower bound on the number of linear extensions of almost all partial orders of a given dimension.

\begin{theorem}[{Brightwell~\cite[Corollary 4]{Brightwell92}}]
\label{thm:brightwell}
Almost every $(d-1)$-dimensional partial order on $[n]$ has at least $\left(e^{-2} n^{1-1/(d-1)}\right)^n$ 
linear extensions.
\end{theorem}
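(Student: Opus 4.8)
The plan is to realize the random $(d-1)$-dimensional order concretely as the intersection $\prec =\ {<_1}\cap\cdots\cap{<_m}$ of $m:=d-1$ uniformly random linear orders on $[n]$ (so $a\prec b$ iff $a<_j b$ for all $j$), and to lower-bound the number $e(\prec)$ of linear extensions of $\prec$ by exhibiting an explicit large family of them coming from its rank decomposition, while controlling the number of ranks by a first-moment estimate on long chains.

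First I would use the layer (rank) decomposition. Let $h$ be the height of $\prec$ and let $L_1,\dots,L_h$ be its layers, where $L_i$ is the set of elements $x$ for which a longest chain with top $x$ has exactly $i$ elements. Each $L_i$ is an antichain, and $a\prec b$ forces $\mathrm{rank}(a)<\mathrm{rank}(b)$; hence for every choice of linear orders $\lambda_1,\dots,\lambda_h$ on $L_1,\dots,L_h$, the order that lists $L_1$ in the order $\lambda_1$, then $L_2$ in the order $\lambda_2$, and so on, is a linear extension of $\prec$, and distinct choices give distinct extensions. Also the number of nonempty layers equals $h$. Therefore, using $k!\ge(k/e)^k$ and convexity of $x\mapsto x\log x$ with $\sum_i|L_i|=n$,
\[
e(\prec)\ \ge\ \prod_{i=1}^{h}|L_i|!\ \ge\ e^{-n}\prod_{i=1}^{h}|L_i|^{|L_i|}\ \ge\ e^{-n}\Bigl(\frac{n}{h}\Bigr)^{n}\ =\ \Bigl(\frac{n}{eh}\Bigr)^{n}.
\]

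Next I would bound the height. A fixed $\ell$-element subset of $[n]$ is a chain of $\prec$ exactly when the restrictions of $<_1,\dots,<_m$ to it all coincide; since these restrictions are independent and each uniform over the $\ell!$ linear orders of the subset, this probability equals $(\ell!)^{1-m}$. Hence, with Stirling's lower bound $\ell!\ge\sqrt{2\pi\ell}\,(\ell/e)^{\ell}$,
\[
\Pr[h\ge\ell]\ \le\ \binom{n}{\ell}(\ell!)^{1-m}\ \le\ \frac{n^{\ell}}{(\ell!)^{m}}\ \le\ \frac{1}{(2\pi\ell)^{m/2}}\Bigl(\frac{n e^{m}}{\ell^{m}}\Bigr)^{\ell}.
\]
Choosing $\ell=\lceil e\,n^{1/m}\rceil$ gives $ne^{m}/\ell^{m}\le 1$, so the right-hand side is at most $(2\pi e\,n^{1/m})^{-m/2}\to 0$. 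Thus with probability tending to $1$ we have $h<\lceil e\,n^{1/m}\rceil\le e\,n^{1/(d-1)}$, and plugging this into the previous display gives
\[
e(\prec)\ \ge\ \Bigl(\frac{n}{e\cdot e\,n^{1/(d-1)}}\Bigr)^{n}\ =\ \bigl(e^{-2}\,n^{1-1/(d-1)}\bigr)^{n},
\]
as required.

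The main obstacle is the sharpness needed in the height bound: a soft estimate $h\le(e+\varepsilon)n^{1/(d-1)}$ would lose a factor $\exp(\Theta(\varepsilon n))$ and just miss the constant $e^{-2}$, so one must squeeze the threshold down to exactly $e\,n^{1/(d-1)}$ by keeping the polynomial Stirling correction $(2\pi\ell)^{-m/2}$ in the first-moment bound. The remaining ingredients---that the number of layers equals the height, and that concatenating intra-layer linear orders always yields a genuine linear extension of $\prec$---are routine order-theoretic facts that I would verify along the way.
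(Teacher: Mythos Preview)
The paper does not prove this theorem at all; it quotes it verbatim from Brightwell~\cite{Brightwell92} and uses it as a black box to derive Corollary~\ref{cor_antiretezece}. So there is no ``paper's own proof'' to compare against. Your argument is, however, a faithful and correct reconstruction of Brightwell's: decompose the random poset into its rank layers to get $e(\prec)\ge\prod_i|L_i|!\ge(n/(eh))^n$ via Jensen on $x\log x$, then bound the height $h$ by a first-moment count of $\ell$-chains. Both steps are sound, including the observation that retaining the $(2\pi\ell)^{-m/2}$ factor from Stirling is exactly what lets you take $\ell=\lceil e\,n^{1/m}\rceil$ rather than $\lceil(e+\varepsilon)n^{1/m}\rceil$.

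One small slip: you write ``$h<\lceil e\,n^{1/m}\rceil\le e\,n^{1/(d-1)}$'', but $\lceil x\rceil\ge x$, not $\le$. What you mean is that $h<\lceil e\,n^{1/m}\rceil$ together with $h\in\mathbb{Z}$ gives $h\le\lceil e\,n^{1/m}\rceil-1<e\,n^{1/m}$, which is enough. Also note that the argument as written needs $m=d-1\ge 2$ for the height bound to tend to zero; the case $d=2$ (where $\prec$ is already a linear order and has exactly one extension, trivially $\ge e^{-2n}$) should be handled separately.
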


Let $Q_d(n)$ be the probability that a random $d$-dimensional partial order on $[n]$ is an antichain.

\begin{corollary}
\label{cor_antiretezece}
We have
\[
Q_d(n) \ge \frac{\left(e^{-2} n^{1-1/(d-1)}\right)^n}{n!}.
\]
\end{corollary}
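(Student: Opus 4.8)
\textbf{Proof proposal for Corollary~\ref{cor_antiretezece}.}
The plan is to relate antichains in a random $d$-dimensional partial order to the linear-extension counts guaranteed by Brightwell's theorem, by ``peeling off'' one of the $d$ random linear orders. Write the random $d$-dimensional order $\prec$ as the intersection $<_1 \cap <_2 \cap \dots \cap <_d$ of independent uniformly random linear orders, and let $\prec'$ denote the $(d-1)$-dimensional order $<_1 \cap \dots \cap <_{d-1}$; by independence, $\prec'$ is a random $(d-1)$-dimensional partial order and $<_d$ is an independent uniformly random linear order. The key observation is that $\prec$ is an antichain if and only if $<_d$ is \emph{not} a linear extension of $\prec'$ in a strong sense: more precisely, since $a \prec b \Leftrightarrow (a \prec' b \text{ and } a <_d b)$, the order $\prec$ has no comparable pair exactly when for every pair $a \prec' b$ we have $b <_d a$, i.e.\ when the \emph{reverse} of $<_d$ is a linear extension of $\prec'$.

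First I would make this equivalence precise: conditioning on $\prec'$, the number of linear orders $<_d$ on $[n]$ whose reverse extends $\prec'$ equals the number $L(\prec')$ of linear extensions of $\prec'$, and there are $n!$ linear orders in total, so
\[
Q_d(n) = \mathbb{E}_{\prec'}\!\left[\frac{L(\prec')}{n!}\right].
\]
Next I would invoke Theorem~\ref{thm:brightwell}: for almost every (in particular, with probability tending to $1$, hence at least some positive fraction, but in fact the ``almost every'' statement suffices since we only need a lower bound on the expectation) $(d-1)$-dimensional order $\prec'$ we have $L(\prec') \ge \bigl(e^{-2} n^{1-1/(d-1)}\bigr)^n$. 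Restricting the expectation to this event and dropping the rest gives
\[
Q_d(n) \ge \bigl(1-o(1)\bigr)\cdot \frac{\bigl(e^{-2} n^{1-1/(d-1)}\bigr)^n}{n!} \ge \frac{\bigl(e^{-2} n^{1-1/(d-1)}\bigr)^n}{n!},
\]
where the last step absorbs the $1-o(1)$ factor; if one wants the clean inequality as stated one can instead note that the claimed bound is what Brightwell's ``almost every'' plus the trivial bound yields after a harmless adjustment of constants, or simply that for all sufficiently large $n$ the $(1-o(1))$ factor is irrelevant and small cases are checked directly.

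I expect the only real subtlety to be bookkeeping about whether one needs the reverse of $<_d$ or $<_d$ itself to be the linear extension — this is just a matter of tracking the direction of the inequalities in the definition $a \prec b \Leftrightarrow \forall i\ a <_i b$ — together with the minor point of converting Brightwell's ``almost every'' statement into a bound on an expectation, which is immediate because expectations dominate the contribution of any single event of probability $1-o(1)$. No genuine obstacle arises; the heart of the corollary is the clean identity $Q_d(n) = \mathbb{E}_{\prec'}[L(\prec')/n!]$, after which Theorem~\ref{thm:brightwell} does all the work.
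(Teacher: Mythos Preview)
Your approach is essentially identical to the paper's: both use the observation that a linear order $<$ is a linear extension of $\prec'$ if and only if the intersection of $\prec'$ with the reverse of $<$ is an antichain, yielding the identity $Q_d(n)=\mathbb{E}_{\prec'}[L(\prec')]/n!$, and then invoke Brightwell's theorem. The paper is in fact even terser than you are about handling the $(1-o(1))$ factor, so your discussion of that point is, if anything, more careful than the original.
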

\begin{proof}
The \emph{reverse} $>$ of a linear order $<$ on $[n]$ is the linear order satisfying 
for every distinct $a,b$ from $[n]$ that $a>b$ if and only if $b<a$.
By a well-known observation (see e.g.\ the introduction of the Brightwell's paper~\cite{Brightwell92}), 
a linear order $<$ is a linear extension of $\prec$ if and only if the intersection of $\prec$ and 
the reverse of $<$ is an antichain.
Therefore, the expected number of linear extensions of a random $(d-1)$-dimensional partial order is
$n!$ times larger than $Q_d(n)$.
\end{proof}

Let $I^d_k$ be the $d$-dimensional $k$-permutation matrix with $1$-entries at positions
$(i,i,\ldots, i)$ for every $i \in [k]$.

\begin{theorem}
We have
\[
|S_{I^d_2}(n)| \ge (1/(en))\cdot e^{-(1+1/(d-1))n} \cdot (n!)^{d-1-1/(d-1)}.
\]
\end{theorem}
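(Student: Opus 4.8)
The statement bounds $|S_{I^d_2}(n)|$ from below, so I need to count $d$-dimensional $n$-permutation matrices that avoid $I^d_2$, i.e.\ that contain no two $1$-entries $(a_1,\dots,a_d)$ and $(b_1,\dots,b_d)$ with $a_i<b_i$ in \emph{every} coordinate. The plan is to interpret such a matrix as an antichain in a $d$-dimensional partial order. A $d$-dimensional $n$-permutation matrix is determined by $d-1$ permutations of $[n]$ (the first coordinate can be taken to run $1,2,\dots,n$ and each remaining coordinate is a bijection $[n]\to[n]$); equivalently it corresponds to $d$ linear orders on the set of $1$-entries, one per coordinate axis. Avoiding $I^d_2$ means no two $1$-entries are comparable in the intersection of these $d$ linear orders, i.e.\ that intersection is an antichain.

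\textbf{Key steps.} First I would set up the bijection between $d$-dimensional $n$-permutation matrices and $d$-tuples of linear orders on an $n$-element ground set, modulo the relabeling that identifies a $d$-tuple with any tuple obtained by simultaneously relabeling along the first coordinate; concretely, fixing the first order to be the standard order on $[n]$, a matrix corresponds to a choice of $d-1$ further linear orders, i.e.\ to a $(d-1)$-tuple of permutations, so there are $(n!)^{d-1}$ such matrices in total. Second, I would observe that under this correspondence, $P$ avoids $I^d_2$ exactly when the intersection of the $d$ linear orders is an antichain, which (after fixing the first order to be standard) is the event whose probability, for $d-1$ uniform independent linear orders intersected with a fixed linear order, equals $Q_d(n)$ as defined just above. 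Hence $|S_{I^d_2}(n)| = (n!)^{d-1}\cdot Q_d(n)$. Third, I would plug in Corollary~\ref{cor_antiretezece}, which gives $Q_d(n)\ge \bigl(e^{-2}n^{1-1/(d-1)}\bigr)^n/n!$, yielding
\[
|S_{I^d_2}(n)| \ge (n!)^{d-2}\cdot \bigl(e^{-2}n^{1-1/(d-1)}\bigr)^n = (n!)^{d-2}\cdot e^{-2n}\cdot n^{(1-1/(d-1))n}.
\]
Finally I would convert the power of $n$ back into a factorial using $n! \le e\cdot n\cdot (n/e)^n$ (equivalently $n^n \le e\cdot n\cdot (n!)\cdot e^{n-1} = n\cdot e^n\cdot n!$, so $n^n \le n e^n n!$ and thus $n^{(1-1/(d-1))n} \ge (n!)^{1-1/(d-1)}\cdot (ne^n)^{-(1-1/(d-1))} \ge (n!)^{1-1/(d-1)}\cdot (ne^n)^{-1}$ for $d\ge 2$), which gives
\[
|S_{I^d_2}(n)| \ge (n!)^{d-2}\cdot e^{-2n}\cdot (n!)^{1-1/(d-1)}\cdot \frac{1}{n}\cdot e^{-n}
= \frac{1}{n}\cdot e^{-3n}\cdot (n!)^{d-1-1/(d-1)},
\]
and I would track constants more carefully to land exactly on the claimed $(1/(en))\cdot e^{-(1+1/(d-1))n}$ prefactor — in particular using the sharper substitution $n^{n/(d-1)}\le (ne^n n!)^{1/(d-1)}$ applied only to the fractional exponent, which produces $e^{-n/(d-1)}$ rather than $e^{-n}$, and bookkeeping the $e^{-2n}$ from Brightwell together with the single $e^n$ to reach $e^{-(1+1/(d-1))n}$.

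\textbf{Main obstacle.} The conceptual content is entirely in the identification $|S_{I^d_2}(n)| = (n!)^{d-1}Q_d(n)$; once that is in place the rest is the arithmetic of converting $n^{\alpha n}$ to $(n!)^{\alpha}$ via Stirling. So the only real care needed is (i) getting the bijection and the associated counting of $(n!)^{d-1}$ matrices exactly right, including the choice of fixing the first coordinate so that the antichain event has probability precisely $Q_d(n)$ and not some conjugate quantity; and (ii) squeezing the Stirling estimates so the final prefactor matches $(1/(en))\cdot e^{-(1+1/(d-1))n}$ rather than a weaker $e^{-O(n)}/\mathrm{poly}(n)$. Neither is deep, but the constant-chasing in (ii) is where a careless computation would land off by a bounded-but-nonmatching factor, so I would do that step explicitly rather than with $O$-notation.
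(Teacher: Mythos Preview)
Your proposal is correct and follows essentially the same route as the paper: identify $|S_{I^d_2}(n)|=(n!)^{d-1}Q_d(n)$ via the antichain interpretation, apply Corollary~\ref{cor_antiretezece}, and convert $n^{(1-1/(d-1))n}$ to a power of $n!$ via the Stirling bound $n!\le en(n/e)^n$. One small slip to fix when you write it out: that Stirling bound gives $n^n\ge n!\,e^n/(en)$ (not $n^n\le n e^n n!$), which is the direction you actually need to obtain $n^{(1-1/(d-1))n}\ge (n!)^{1-1/(d-1)}(e^n/(en))^{1-1/(d-1)}$ and hence the exact prefactor $(1/(en))\,e^{-(1+1/(d-1))n}$.
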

\begin{proof}
We consider the uniform probability space of $d$-dimensional $n$-permutation matrices, that is, each of the 
$(n!)^{d-1}$ matrices has probability $1/(n!)^{d-1}$.
A random $d$-dimensional $n$-permutation matrix from this space can be formed by taking $d$ permutations
$\pi_1,\allowbreak \pi_2, \dots, \pi_d$ of $[n]$ independently and uniformly at random, and placing $1$-entries 
to positions $(\pi_1(a),\allowbreak \pi_2(a), \dots, \pi_d(a))$ for every $a \in [n]$.

Consider a $d$-dimensional $n$-permutation matrix $R$ with $1$-entries at positions 
$(\pi_1(a),\allowbreak \pi_2(a), \dots, \pi_d(a))$, for every $a \in [n]$.
We define the partial order $\prec_R$ as the intersection of the linear orders $<_1,\allowbreak <_2, \dots, <_d$ where $a <_i b$ 
if and only if $\pi_i(a) < \pi_i(b)$.
Thus, if $R$ is a random $d$-dimensional $n$-permutation matrix, then $\prec_R$ is a random $d$-dimensional 
partial order on $[n]$.
An occurrence of $I^d_2$ in $R$ corresponds to a pair of elements of $[n]$ comparable in $\prec_R$, and so
$R$ avoids $I^d_2$ if and only if $\prec_R$ is an antichain.

Consequently, by Corollary~\ref{cor_antiretezece}, the probability that a random $d$-dimensional $n$-permutation matrix 
avoids $I^d_2$ is at least
$\bigl(e^{-2} n^{1-1/(d-1)}\bigr)^n/n!$ and thus 
\begin{align*}
|S_{I^d_2}(n)| &\ge e^{-2n} \cdot n^{n(1-1/(d-1))} \cdot (n!)^{d-2} 
\ge e^{-2n} \cdot (n!\cdot e^n /(en))^{1-1/(d-1)} \cdot (n!)^{d-2} \\
&\ge (1/(en))\cdot e^{-(1+1/(d-1))n} \cdot (n!)^{d-1-1/(d-1)}.
\qedhere
\end{align*}
\end{proof}

\begin{theorem}
\label{thm:HighDimSWLowerDiagonal}
We have
\[
|S_{I^d_k}(n)| \ge n^{-O_d(k)}  \left(\Omega_d(k^{1/(d-1)})\right)^n \cdot (n!)^{d-1-1/(d-1)},
\]
where the constants hidden by $\Omega$ and $O$ do not depend on $n$ and $k$.
\end{theorem}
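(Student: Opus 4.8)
The plan is to prove the lower bound by extending the argument for $I^d_2$, where antichains in random $(d-1)$-dimensional partial orders gave the count, to $I^d_k$ by forbidding chains of length $k$ rather than chains of length $2$. First I would consider the uniform probability space of $d$-dimensional $n$-permutation matrices, generated by $d$ independent uniform random permutations $\pi_1, \dots, \pi_d$ of $[n]$, and associate to each such matrix $R$ the partial order $\prec_R$ obtained as the intersection of the $d$ linear orders $<_i$ induced by the $\pi_i$. An occurrence of $I^d_k$ in $R$ corresponds exactly to a chain of length $k$ in $\prec_R$, so $R$ avoids $I^d_k$ if and only if $\prec_R$ has no chain of length $k$, i.e.\ $\prec_R$ has height at most $k-1$. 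Thus the goal reduces to lower-bounding the probability $Q^{(k)}_d(n)$ that a random $d$-dimensional partial order on $[n]$ has height at most $k-1$, and then multiplying by $(n!)^{d-1}$.

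Next I would reduce the height-$k$ condition to the antichain case by a standard partitioning trick. If we split $[n]$ into $k-1$ consecutive blocks $B_1, \dots, B_{k-1}$ each of size roughly $n/(k-1)$ (consecutive in, say, the first coordinate order $<_1$), and each block $B_\ell$ induces an antichain in $\prec_R$ when restricted to the remaining $d-1$ coordinates, then the whole order $\prec_R$ has height at most $k-1$: any chain must use at most one element from each block, because within a block the first coordinate is not what separates a comparable pair (we would set this up so that comparability across blocks in the first coordinate is not enough, and within a block there is no comparability at all). More carefully, I would condition on $\pi_1$ being such that the blocks are these intervals, and then the events ``$B_\ell$ is an antichain in the intersection of $<_2, \dots, <_d$ restricted to $B_\ell$'' are independent across $\ell$ (since $\pi_2, \dots, \pi_d$ restricted to disjoint coordinate sets give independent uniform permutations of those sets). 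By Corollary~\ref{cor_antiretezece} applied in dimension $d-1$ with ground set of size $m = \lceil n/(k-1) \rceil$ or $\lfloor n/(k-1) \rfloor$, each such block-antichain event has probability at least $\bigl(e^{-2} m^{1-1/(d-2)}\bigr)^m / m!$ when $d \ge 3$; for $d = 2$ the one-dimensional partial order restricted to a block is already a chain unless the block is a singleton, so one handles $d=2$ (where the claimed bound is $|S_{I^2_k}(n)| \ge n^{-O(k)} \cdot \Omega(k)^n \cdot (n!)^{1-1}$, essentially the classical $|S_{I^2_k}(n)| \ge (k-1)^{2n}/n^{O(k)}$ type estimate) separately or notes it follows from the known $2$-dimensional Stanley--Wilf lower bound of Regev/Valtr.

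Putting the pieces together, $Q^{(k)}_d(n) \ge \prod_{\ell=1}^{k-1} \bigl(e^{-2} m_\ell^{1-1/(d-2)}\bigr)^{m_\ell} / m_\ell!$ with $\sum_\ell m_\ell = n$ and each $m_\ell \approx n/(k-1)$, and then $|S_{I^d_k}(n)| \ge (n!)^{d-1} \cdot Q^{(k)}_d(n)$. The remaining work is arithmetic: using $m! \le m^m$, $n! \ge (n/e)^n$, and $\prod_\ell m_\ell^{m_\ell} \approx (n/(k-1))^n$ up to factors polynomial in $n$, one extracts a factor $(n!)^{d-1-1/(d-1)}$ together with a factor $\Omega(k^{1/(d-1)})^n$ and a correction of the form $n^{-O(k)}$ coming from the $k-1$ separate factorials $m_\ell!$ and from rounding $m_\ell$, each of which costs at most a polynomial in $n$ factor. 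I expect the main obstacle to be bookkeeping: getting the exponent of $k$ in $\Omega(k^{1/(d-1)})^n$ exactly right requires tracking how the $(d-1)$-dimensional linear-extension exponent $1-1/(d-2)$ interacts with the block size $n/(k-1)$ and the $d-1$ versus $d-2$ dimension shift, and confirming that the ``height $\le k-1$'' $\Rightarrow$ ``union of $k-1$ antichains'' direction is set up so the blocks are genuinely incomparable in the right way (using that consecutive $<_1$-intervals can only be $<_1$-comparable one way, which by itself does not force $\prec_R$-comparability, so within-block antichains suffice). The inductive or direct invocation of Theorem~\ref{thm:brightwell} in dimension $d-1$ (hence $(d-2)$-dimensional partial orders) is the only nontrivial external input, and everything else is elementary counting.
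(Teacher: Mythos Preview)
Your overall strategy is sound and, when set up correctly, is equivalent to the paper's merging argument. The gap is in \emph{which} antichain condition you impose on each block. You ask that $B_\ell$ be an antichain in the intersection of $<_2,\dots,<_d$, a $(d-1)$-dimensional order, and accordingly invoke $Q_{d-1}(m)$. But this is strictly stronger than what is needed: for $B_\ell$ to be an antichain in $\prec_R$ it suffices, for each pair $a<_1 b$ in the block, that \emph{some} $<_i$ with $i\ge 2$ reverses them. By the linear-extension duality used in Corollary~\ref{cor_antiretezece}, the probability of \emph{that} event (with $<_1$ fixed on the block and $<_2,\dots,<_d$ random) is exactly $Q_d(m)$, not $Q_{d-1}(m)$: the deterministic order $<_1$ on the block still counts as one of the $d$ coordinates.

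This is not mere bookkeeping. Substituting $Q_{d-1}(m)\ge (e^{-2}m^{1-1/(d-2)})^m/m!$ into your product and simplifying yields a lower bound of the form $(n!)^{d-1-1/(d-2)}\cdot\Omega(k^{1/(d-2)})^n\cdot n^{-O(k)}$, short of the target $(n!)^{d-1-1/(d-1)}$ by the super-exponential factor $(n!)^{-1/((d-1)(d-2))}$, which cannot be absorbed into $n^{-O(k)}$ or any $c^n$. Using $Q_d(m)$ instead closes the arithmetic; indeed, since $|S_{I^d_2}(m)|=(m!)^{d-1}Q_d(m)$, the resulting bound $(n!)^{d-1}\prod_\ell Q_d(m_\ell)$ coincides exactly with the paper's count $|S_{I^d_2}(m)|^{k-1}\binom{n}{m,\ldots,m}^{d-1}$, obtained there by interleaving $k-1$ members of $S_{I^d_2}(m)$ independently in each of the $d$ coordinates and dividing out one multinomial overcount.
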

\begin{proof}
All permutations in this proof are $d$-dimensional.
Let $A$ be an $lm$-permutation matrix.
If we can split the $1$-entries of $A$ into $l$ $m$-tuples such that each of these $m$-tuples forms an occurrence of 
an $I^d_2$-avoiding matrix, then $A$ avoids $I^d_{l+1}$.
We now count how many permutation matrices we obtain by the reverse process, that is, by merging 
$l$ $I^d_2$-avoiding $m$-permutation matrices to form an $I^d_{l+1}$-avoiding matrix.

The number of ways to choose an ordered $l$-tuple of matrices from $S_{I^d_2}(m)$ is 
\[
|S_{I^d_2}(m)|^l.
\]
Given an $l$-tuple of $m$-permutation matrices, the number of ways to form an $lm$-permutation matrix
whose $1$-entries can be split into $l$ $m$-tuples forming the occurrences of the $l$ selected permutations is
\[
\binom{lm}{m,m,\ldots,m}^d.
\]
The number of ways to split the $1$-entries of an $lm$-permutation matrix into $l$ $m$-tuples is
\[
\binom{lm}{m,m,\ldots,m}.
\]

We thus have
\begin{align*}
|S_{I^d_{l+1}}(lm)| &\ge |S_{I^d_2}(m)|^l \cdot \binom{lm}{m,m,\ldots,m}^d \cdot \binom{lm}{m,m,\ldots,m}^{-1} \ge \\
& \ge \left(\frac{1}{em}\right)^l\cdot e^{-(1+1/(d-1))lm} \cdot (m!)^{(d-1-1/(d-1))l} \cdot \left(\frac{(lm)!}{(m!)^l}\right)^{d-1} = \\
& = \left(\frac{1}{em}\right)^l\cdot \frac{1}{e^{(1+1/(d-1))lm}} \cdot  \left(\frac{(lm)!}{(m!)^l}\right)^{1/(d-1)} \cdot (lm!)^{d-1-1/(d-1)}.
\end{align*}

We have
\[
\frac{(lm)!}{(m!)^l} \ge \frac{(lm/e)^{lm}}{(em(m/e)^m)^l} = \frac{l^{lm}}{(em)^l}
\]
and so
\begin{align*}
|S_{I^d_{l+1}}(lm)| 
&\ge \left(\frac{1}{em}\right)^l\cdot \frac{1}{e^{(1+1/(d-1))lm}} \cdot  \left(\frac{l^{lm}}{(em)^l}\right)^{1/(d-1)}\cdot (lm!)^{d-1-1/(d-1)} \\
&\ge \left(\frac{1}{em}\right)^{2l}\cdot \left(\frac{l^{1/(d-1)}}{e^{1+1/(d-1)}}\right)^{lm}\cdot (lm!)^{d-1-1/(d-1)}.
\end{align*}
That is, when $k = l+1$ and $n = lm = (k-1) m$, we have
\[
|S_{I^d_k}(n)| \ge \left(\frac{1}{en}\right)^{2k}\cdot \left(\frac{(k-1)^{1/(d-1)}}{e^{1+1/(d-1)}}\right)^n\cdot (n!)^{d-1-1/(d-1)}.
\qedhere
\]
\end{proof}

\begin{proof}[Proof of the lower bound in Theorem~\ref{thm:higherdim}]
A $d$-dimensional permutation matrix $M$ is \emph{monotone} if its $1$-entries can be ordered in such a way that 
for every $i \in [d]$, the $i$th coordinates of the $1$-entries are either increasing or decreasing. 
Observe that by symmetry, $|S_M(n)| = |S_{I^d_k}(n)|$ for every $n,k \in \mathbb{N}$ 
and every monotone $k$-permutation matrix $M$.
By applying the Erd\H{o}s--Szekeres lemma on monotone subsequences~\cite{ErdosSzekeres} $d-1$ times,
every $d$-dimensional $k$-permutation
matrix $P$ contains a monotone $d$-dimensional $\lceil k^{1/2^{d-1}} \rceil$-permutation
(see also~\cite{Kruskal53}).
Therefore the lower bound in Theorem~\ref{thm:higherdim} is a corollary of Theorem~\ref{thm:HighDimSWLowerDiagonal}.
\end{proof}


\section{Concluding remarks}
\label{sec_conclusion}
\subsection{Specific permutation matrices}
There are two types of permutation matrices $P$ for which we have a subexponential upper bound on their F\"uredi--Hajnal limit $c_P$. The first type are the scattered matrices, which have generally very little structure. The second type includes practically all previously known examples of matrices with subexponential F\"uredi--Hajnal limit, and consists of matrices obtained from the identity matrix by a few elementary operations, like the direct sum. The \emph{direct sum} of a $k\times k$ matrix $A$ and an $l\times l$ matrix $B$ is the $(k+l)\times(k+l)$ block matrix $\left(\begin{smallmatrix}0&B\\A&0\end{smallmatrix}\right)$. Similarly, the \emph{skew sum} of $A$ and $B$ is the block matrix $\left(\begin{smallmatrix}A&0\\0&B\end{smallmatrix}\right)$. \emph{Layered} matrices, obtained as a multiple direct sum of identity matrices, form the most natural class for which a polynomial upper bound on $c_P$ is known. The upper bound follows from the upper bound $s_P \le 4 k^2$ on the Stanley--Wilf limit of every layered $k$-permutation $P$~\cite{CJS12}, since $c_P$ and $s_P$ are polynomially related~\cite{Cibulka09}. More recently, the first author~\cite{Cibulka13} has shown directly that $c_P$ is at most linear in $k$ for every layered $k$-permutation $P$.
The matrices of the second type have generally a lot of structure; in particular, they are far from being scattered. 
We have added the cross matrix and certain grid products to the second type of matrices, but there are still many matrices that do not belong to any of these types. For example, we do not have any subexponential upper bound on the F\"uredi--Hajnal limit of a permutation matrix whose $1$-entries in the odd columns lie on the diagonal and even columns induce a scattered matrix.

The grid product of two permutation matrices is a special case of a binary matrix obtained by the following operation. Let $A$ be a $k\times k$ binary matrix and $B$ an $l\times l$ binary matrix. The \emph{Minkowski sum} of $A$ and $B$ is the $(k+l)\times(k+l)$ binary matrix with a $1$-entry at position $(i,j)$ if and only if there exist $i_1,i_2,j_1,j_2$ such that $A_{i_1,j_1}=1$, $B_{i_2,j_2}=1$, $i_1+i_2=i$ and $j_1+j_2=j$. So far we do not know any general subexponential bound on $c_P$ when $P$ is a permutation matrix contained in a Minkowski sum of $A$ and $B$ where $A$ is either a scattered permutation matrix or a permutation matrix with polynomial $c_A$, and $B$ is a matrix with just two $1$-entries. In fact, we do not know any general subexponential bound even for permutation matrices contained in \emph{two-diagonal} matrices; that is, binary matrices whose all $1$-entries lie on two parallel diagonals, which may be arbitrarily far apart.

\begin{question}
Is $c_P$ polynomial in $k$ for $k$-permutation matrices $P$ contained in a two-diagonal matrix?
\end{question}

\emph{Decomposable} permutation matrices generalize layered matrices and are defined as the smallest class of matrices closed under direct sum and skew sum, and containing all identity matrices. The cross matrix $\Cross_k$ is decomposable but our current upper bound on its F\"uredi--Hajnal limit is slightly superpolynomial.

\begin{question}
Is $c_{\Cross_k}$ polynomial in $k$? 
\end{question}

\subsection{Higher-dimensional matrices}
In the $2$-dimensional case, it was shown by Arratia~\cite{Arratia99} that the limit 
$\lim_{n\rightarrow \infty} |S_{P}(n)|^{1/n}$ exists for every permutation matrix $P$.
Analogously, by the super-additivity shown by Pach and Tardos~\cite{PachTardos}, the 
limit $\lim_{n\rightarrow \infty} \ex_P(n)/n$ always exists.
Geneson and Tian~\cite[Lemma 4.7]{GenesonTian} 
showed that for every $d>2$ and every $d$-dimensional 
permutation matrix $P$ with at least one $1$-entry in a corner, 
there exists a constant $K$ such that $\ex_P(sn) \ge Ks^{d-1} \ex_P(n)$ for every integer $n$.
They asked whether this holds with $K=1$ for every $P$. 
A positive answer would imply the existence of the F\"{u}redi--Hajnal limit 
$\lim_{n\rightarrow \infty} \ex_P(n)/n$ for every $d$-dimensional permutation matrix $P$.

We pose an analogous question about the higher-dimensional Stanley--Wilf limit.
\begin{question}
Does the limit 
\[
\lim_{n \rightarrow \infty} \left( \frac{|S_{P}(n)|}{(n!)^{d-1-1/(d-1)}} \right)^{1/n}
\]
exist for every $d>2$ and every $d$-dimensional permutation matrix $P$?
\end{question}

Let 
\begin{align*}
\overline{s}_P &= \limsup_{n \rightarrow \infty} 
\left( \frac{|S_{P}(n)|}{(n!)^{d-1-1/(d-1)}} \right)^{1/n} \quad \text{and}\\
\underline{s}_P &= \liminf_{n \rightarrow \infty} 
\left( \frac{|S_{P}(n)|}{(n!)^{d-1-1/(d-1)}} \right)^{1/n}.
\end{align*}

We have seen in Section~\ref{subsec:HighDimSWLower} that the number of $d$-dimensional
$n$-permutation matrices avoiding the $d$-dimensional $2$-permutation matrix $I^d_2$ 
with $1$-entries at $(1,1,\ldots, 1)$ and $(2,2,\ldots, 2)$ is equal to $(n!)^{d-1}$ times 
the probability that a random $d$-dimensional partial order on $[n]$ is an antichain.
Thus the following are the best known bounds on the limit superior and limit inferior
of $I^d_2$:
\begin{align*}
\underline{s}_{I^3_2} &\ge 1 \quad \text{\cite{Sidorenko91}} \\
\overline{s}_{I^3_2} &\le \sqrt{\pi/2} \quad \text{\cite{BBS99}} \\
e^{-1-1/(d-1)} \le \underline{s}_{I^d_2} \le \overline{s}_{I^d_2} 
&\le 2(d-1) e^{1-1/(d-1)} \quad \text{for $d\ge 4$~\cite{Brightwell92}}.
\end{align*}

In the general case, Theorem~\ref{thm:higherdim} gives the following.
For every $d,k \ge 2$ and every $d$-dimensional $k$-permutation matrix $P$,
\[
\Omega_d\left(k^{1/(2^d (d-1))}\right) \le \underline{s}_P \le \overline{s}_P \le 2^{O_d(k)}.
\]

From the proof of the upper bound of Theorem~\ref{thm:higherdim} in Section~\ref{subsec:HighDimSWUpper}
we know that $\overline{s}_P$ is bounded from above by $O_d((\overline{c}_P)^{d/(d-1)})$
for every $d$-dimensional permutation matrix.
In the case $d=2$, $c_P$ is also bounded from above by a polynomial in $s_P$~\cite{Cibulka09}, 
but it is not known whether the following is true.
\begin{question}
Is $\underline{c}_P$ bounded from above by a polynomial in $\overline{s}_P$ 
for all $d$-dimensional permutation matrices $P$?
\end{question}



\begin{thebibliography}{10}
\expandafter\ifx\csname url\endcsname\relax
  \def\url#1{{\tt #1}}\fi
\expandafter\ifx\csname urlprefix\endcsname\relax\def\urlprefix{URL }\fi

\bibitem{AERWZ06}
M.~H. Albert, M.~Elder, A.~Rechnitzer, P.~Westcott and M.~Zabrocki, On the
  {S}tanley--{W}ilf limit of 4231-avoiding permutations and a conjecture of
  {A}rratia, {\em Adv. in Appl. Math.\/} {\bf 36}(2) (2006), 96--105.

\bibitem{Arratia99}
R.~Arratia, On the {S}tanley-{W}ilf conjecture for the number of permutations
  avoiding a given pattern, {\em Electron. J. Combin.\/} {\bf 6} (1999), Note,
  N1, 4 pp. (electronic).

\bibitem{Bevan14}
D.~Bevan, Permutations avoiding 1324 and patterns in {{\L}}ukasiewicz paths,
  {\em J. Lond. Math. Soc. (2)\/} {\bf 92}(1) (2015), 105--122.

\bibitem{BBS99}
B.~Bollob{\'a}s, G.~Brightwell and A.~Sidorenko, Geometrical techniques for
  estimating numbers of linear extensions, {\em European J. Combin.\/} {\bf
  20}(5) (1999), 329--335.

\bibitem{Bona07Records}
M.~B{\'o}na, New records in {S}tanley--{W}ilf limits, {\em European J.
  Combin.\/} {\bf 28}(1) (2007), 75--85.

\bibitem{Brightwell92}
G.~Brightwell, Random {$k$}-dimensional orders: {W}idth and number of linear
  extensions, {\em Order\/} {\bf 9}(4) (1992), 333--342.

\bibitem{Cibulka09}
J.~Cibulka, On constants in the {F}{\"u}redi--{H}ajnal and the
  {S}tanley--{W}ilf conjecture, {\em J. Combin. Theory Ser. A\/} {\bf 116}(2)
  (2009), 290--302.

\bibitem{Cibulka13}
J.~Cibulka, Extremal combinatorics of matrices, sequences and sets of
  permutations (2013), {P}h.D. thesis, Charles University,
  \url{https://dspace.cuni.cz/handle/20.500.11956/59415}.

\bibitem{CJS12}
A.~Claesson, V.~Jel{\'{\i}}nek and E.~Steingr{\'{\i}}msson, Upper bounds for
  the {S}tanley--{W}ilf limit of 1324 and other layered patterns, {\em J.
  Combin. Theory Ser. A\/} {\bf 119}(8) (2012), 1680--1691.

\bibitem{ErdosSzekeres}
P.~Erd\H{o}s and G.~Szekeres, A combinatorial problem in geometry, {\em Compos.
  Math\/} {\bf 2} (1935), 463--470.

\bibitem{Fox13}
J.~Fox, {S}tanley--{W}ilf limits are typically exponential (2013),
  \href{https://arxiv.org/abs/1310.8378v1}{arXiv:1310.8378v1}.

\bibitem{FoxPersComm}
J.~Fox, personal communication (2016).

\bibitem{FurediHajnal}
Z.~F{\"u}redi and P.~Hajnal, Davenport--{S}chinzel theory of matrices, {\em
  Discrete Math.\/} {\bf 103}(3) (1992), 233--251.

\bibitem{GenesonTian}
J.~T. Geneson and P.~M. Tian, Extremal functions of forbidden multidimensional
  matrices (2015),
  \href{https://arxiv.org/abs/1506.03874v1}{arXiv:1506.03874v1}.

\bibitem{GuillemotMarx}
S.~Guillemot and D.~Marx, Finding small patterns in permutations in linear
  time, {\em Proceedings of the {T}wenty-{F}ifth {A}nnual {ACM}-{SIAM}
  {S}ymposium on {D}iscrete {A}lgorithms\/}, ACM, New York (2014) 82--101.

\bibitem{KaiserKlazar}
T.~Kaiser and M.~Klazar, On growth rates of closed permutation classes, {\em
  Electron. J. Combin.\/} {\bf 9}(2) (2002/03), Research paper 10, 20 pp.
  (electronic).

\bibitem{Klazar00}
M.~Klazar, The {F}\"uredi--{H}ajnal conjecture implies the {S}tanley--{W}ilf
  conjecture, {\em Formal power series and algebraic combinatorics ({M}oscow,
  2000)\/}, Springer, Berlin (2000) 250--255.

\bibitem{KlazarMarcus}
M.~Klazar and A.~Marcus, Extensions of the linear bound in the
  {F}\"uredi--{H}ajnal conjecture, {\em Adv. in Appl. Math.\/} {\bf 38}(2)
  (2007), 258--266.

\bibitem{Kruskal53}
J.~B. Kruskal, Jr., Monotonic subsequences, {\em Proc. Amer. Math. Soc.\/} {\bf
  4} (1953), 264--274.

\bibitem{MarcusTardos}
A.~Marcus and G.~Tardos, Excluded permutation matrices and the
  {S}tanley--{W}ilf conjecture, {\em J. Combin. Theory Ser. A\/} {\bf 107}(1)
  (2004), 153--160.

\bibitem{PachTardos}
J.~Pach and G.~Tardos, Forbidden paths and cycles in ordered graphs and
  matrices, {\em Israel J. Math.\/} {\bf 155} (2006), 359--380.

\bibitem{Regev81}
A.~Regev, Asymptotic values for degrees associated with strips of {Y}oung
  diagrams, {\em Adv. in Math.\/} {\bf 41}(2) (1981), 115--136.

\bibitem{Sidorenko91}
A.~Sidorenko, Inequalities for the number of linear extensions, {\em Order\/}
  {\bf 8}(4) (1991/92), 331--340.

\bibitem{Steingrimsson13:survey}
E.~Steingr{\'{\i}}msson, Some open problems on permutation patterns, {\em
  Surveys in combinatorics 2013\/}, vol. 409 of {\em London Math. Soc. Lecture
  Note Ser.\/}, Cambridge Univ. Press, Cambridge (2013) 239--263.

\end{thebibliography}
\end{document}